\date{}
\allowdisplaybreaks \allowdisplaybreaks[2]
\numberwithin{equation}{section}
\newcommand{\norm}[1]{\left\lVert #1 \right\rVert}
\newcommand{\abs}[1]{\left\lvert #1 \right\rvert}
\newcommand{\subeqref}[2]{$ \eqref{#1}_{#2} $}
\newcommand{\jump}[1]{\left\ldbrack#1\right\rdbrack} 
\newcommand{\p}{\partial}
\newcommand{\R}{\mathbb{R}}
\newcommand{\dt}{\frac{d}{dt}}
\newcommand{\ovl}{\overline}
\newcommand{\X}{\mathcal{X}}
\newcommand{\ull}{u_-}
\newcommand{\ur}{u_+}
\newcommand{\ubl}{\bar{u}_-}
\newcommand{\ubr}{\bar{u}_+}
\newcommand{\andd}{~\text{ and }~}
\def\ub{\bar{u}}
\def\xb{\bar{x}}
\def\vt{\tilde{v}}
\def\wt{\tilde{w}}
\def\usharp{u^{\sharp}}
\def\xiangle{\langle \xi-\xi_* \rangle}
\newcommand{\sgnk}{\mathrm{sgn}(k)}
\newtheorem{theorem}{Theorem}[section]
\newtheorem{lemma}[theorem]{Lemma}
\newtheorem{proposition}[theorem]{Proposition}
\newtheorem{remark}[theorem]{Remark}
\begin{document}

\title[Time-periodic boundary effects on conservation laws]{Time-periodic boundary effects on the shocks for scalar conservation laws}

	
	\author{Yuan~Yuan
	}
	\address[Yuan Yuan]{School of Mathematical Sciences, South China Normal University, Guangzhou 510631, China. }
	\email{\href{mailto:yyuan2102@m.scnu.edu.cn}{yyuan2102@m.scnu.edu.cn}}
	
\keywords{Conservation laws; Time-periodic; Boundary effects.}

\subjclass[2010]{35Q30, 35L65, 35L67}

\maketitle
\begin{abstract} 
	This paper is concerned with the asymptotic stabilities of the inviscid and viscous shocks for the scalar conservation laws on the half-line $(-\infty,0)$ with shock speed $s<0$, subjected to the \emph{time-periodic} boundary condition, which arises from the classical piston problems for fluid mechanics.
	Despite the importance, how time-periodic boundary conditions affect the long-time behaviors of Riemann solutions has remained unclear. This work addresses this gap by rigorously proving that in both inviscid and viscous case, the asymptotic states of the solutions under the time-periodic boundary conditions are not only governed by the shifted background (viscous) shocks, but also coupled with the time-periodic boundary solution induced by the time-periodic boundary.
	Our analysis reveals that these effects manifest as a propagating ``\emph{boundary wave}'', which influences the shock dynamics.
	
\end{abstract}

\maketitle

\section{Introduction} 

In this paper, we study the asymptotic states of the solutions to the initial-boundary value problem (IBVP) for scalar conservation laws:
\begin{align}
	\p_t u(x,t)+\p_x f(u(x,t)) & = \mu \p_x^2 u, \quad x \in \R_-,~t>0, \label{eqn-0} \\
	u(0,t) &=u_b(t), \quad  t>0, \label{bc0}\\
	u(x,0) & = u_0(x), \quad x\in \R_-, \label{ic0}
\end{align}
where the flux $f(u)\in C^2(\R)$, $u_b(t)$ is a periodic function of period $T$, and $u_0(x)\rightarrow \ubl \text{~when~} x\rightarrow -\infty$. $\mu\geq 0$ is the viscosity constant, and in this paper it  is set just as $0$ or $1$.

It is well-known that  
the large time behaviors of entropy solutions to the corresponding Cauchy problem (in the whole space $\R$) \eqref{eqn-0} and \eqref{ic0}, with the initial data approaching constant states $\ub_\pm$ as $x\rightarrow \pm \infty$, is governed by the Riemann solutions and the shock profiles.
In the simplified situation where $f$ is strictly convex, when $\mu=0$ and the initial data are the Riemann data for constants $ \ubl > \ubr  $,
the unique entropy solution to the corresponding Cauchy problem \eqref{eqn-0},\eqref{ic0} is the well-known  shock wave $ u^S(x,t) $: 
	\begin{equation}\label{shock}
		u^S(x,t) := \begin{cases}
			\ubl, & \quad x<st,\\
			\ubr, & \quad x>st,
		\end{cases}
	\end{equation} 
and $ s=s(\ubl,\ubr)$ is the shock speed satisfying the Rankine-Hugoniot condition 
	\begin{equation}
		\label{RH} \tag{R-H}
		- \jump{\ub}s+ \jump{f(\ub)}=0,
	\end{equation} 
	and the Lax entropy condition
	\begin{equation}
		\label{non-degenerate-shock}
		f'(\ubr)<s<f'(\ubl),
	\end{equation}
    where $\jump{u}:=u_+ - u_-$, and we adhere to the convention that $f(u)_{\pm}:=f(u_\pm)$ for any function $f$.
When $\mu> 0$, the corresponding Cauchy problem has a unique viscous shock wave $\phi^S$ up to a shift $X(t)$ with the shock speed $s$ mentioned above. See \cite{Liu.1977b,Goodm.1986,Kawas.M1985,Liu.Z2009a} and the references therein for the stability results of the Cauchy problems. 

However, in the IBVP, the boundary value $u_b(t)$ in \eqref{bc0} is incompatible with the values of $u^S$ and $\phi^S$ on $x=0$, i.e., 
$$u_b(t)-u^S|_{x=0}\neq0, \quad u_b(t)-\phi^S|_{x=0}\neq0,$$
there will be boundary effects on the shock dynamics. 
When $u_b(t)$ remains as a constant, these effects only appear in the viscous case. In \cite{Liu.N1997}, Liu and Nishihara proved the stability of the shock when $s<0$, where asymptotic state becomes $\phi^S(x-st-X(t))$ and $X(t)$ approaches a constant depending on the initial and boundary data. Moreover, stability of the stationary solution when $s>0$ was also obtained. The result for $s<0$ was extended to the hyperbolic system by Deng, Wang and Yu in \cite{Deng.W.Y2012}.
When $s=0$, the constant boundary forces the stationary shock propagating away from the boundary, and the location $X(t)=\mathcal{O}(\ln t)$ (\cite{Liu.Y1997a,Nishi.2001}). The behaviors with boundary corresponding to rarefaction waves were studies in Liu, Matsumura and Nishihara \cite{Liu.M.N1998}. 
There are also extensive works devoted to studying the stability of shocks, rarefaction waves, and stationary solutions for IBVPs of the Navier-Stokes equations with constant boundaries (\cite{Matsu.M1999,Nishi.Y1997,Matsu.N2001,Kawas.N.Z2003,Huang.M.S2003,Kawas.Z2009,Hong.W2017,Wang.W2025}).

While for time-periodic boundaries, which arise from as the classical piston problem in fluid dynamics, existing studies focus on the existence and stability of time-periodic solutions.
In the viscous case,  Matsumuara and Nishida \cite{Matsu.N1989} considered the piston problem of viscous gas in a bounded interval, and proved the existence of the time-periodic solutions to the one-dimensional compressible Navier-Stokes generated by time-periodic boundaries or forces. 
It was proved by Kagei and Oomachi \cite{Kagei.O2016} that the time-periodic solution for the half space is asymptotically stable when the Reynolds number is sufficiently small.
Feireisl and his collaborators \cite{Feire.G.S2023} showed the existence of a time periodic weak solution to Navier–Stokes–Fourier system  subject to time-periodic heating boundary and potential force.
In the inviscid case, Yuan \cite{Yuan.2019} obtained existence of time-periodic supersonic solutions in a finite interval to the one space-dimensional isentropic compressible Euler equations subjected to periodic boundary conditions.
For quasilinear hyperbolic systems with time-periodic boundaries possessing a dissipative structure, the existence, uniqueness and stability of the time-periodic classical solutions were proved in \cite{Qu.2020}. 
We refer the interested readers to \cite{Duan.U.Y.Z2007,Beck.S.Z2010,Jin.Y2015,Kagei.T2015a,Tsuda.2016,Qu.Y.Z2023} for more works on time-periodic solutions for fluid dynamics.

\vline

Despite the importance of time-periodic boundary problems, the fundamental roles of nonlinear coupling and potential resonance in shaping the long-time dynamics of Riemann solutions have remained open and challenging.
This work addresses this gap by providing a rigorous analysis of time-periodic boundary effects on Riemann solutions.

Firstly, we study the case that the shocks move away from the boundary ($s<0$ in \eqref{RH}) under the incoming boundary condition: 
\begin{equation}
	\label{bc-incoming}
	\tag{InC}
	f'(u_b(t))<-\delta_b<0 \qquad \text{for all~} t>0  \text{ and some constant } \delta_b>0.
\end{equation}

It is observed that 
regardless of how small the oscillation of time-periodic boundary value is, the solution exhibits (nearly) time-periodic oscillations near the boundary while decaying with distance. Consequently, the long-time behaviors of the solutions are not solely governed by the background (viscous) shocks with shifts, but is also coupled with the time-periodic solutions induced by the time-periodic boundary.
The central challenge thus lies in characterizing the interactions between the shock and the time-periodic solution. 

For the inviscid case, we provide a comprehensive characterization of the solution behaviors under the time-periodic boundary conditions. That is, a time-periodic solution exists in the domain bounded by the boundary and a curve, and this curve will converge to the perturbed shock curve after a certain time. 
In other words, time-periodic boundary effects directly act on the shock curve without any transitional region between the shock and time-periodic boundary solution. It reveals for the first time that these effects manifest as a propagating ``boundary wave''.
Furthermore, we obtain both the decay rate of the shock speed and the asymptotic displacement of the perturbed shock curve.

In contrast, the viscous effect induces a transitional region where the viscous shock profile couples with the time-periodic solution. 
We propose an ansatz capturing the propagation of the viscous shock and the boundary wave, as well as their transitions.
The ansatz plays an important role in the anti-derivative argument for stability analyses of Riemann solutions in previous works.
In this paper, the construction of the ansatz is inspired by the above observations in the inviscid case, and experiences of our previous works on the problems of spatial periodic perturbations \cite{Xin.Y.Y2019,Xin.Y.Y2021}.  
It is proved that the solution asymptotically converges to the superposition of the shifted background viscous shock and the time-periodic boundary wave. 
%
%

This paper is organized as follows. In Section 2 we first present the notations, and the theorems. In Section 3, we prepare some preliminary lemmas, including generalized characteristics, properties of viscous shocks and time-periodic solutions. Then the stabilities of inviscid and viscous shock waves under time-periodic boundaries are proved in the subsequent two sections, respectively. 


\section{Main results}

	\textit{Notations.} In the  paper, $L^p(U)$ and $H^{k}(U)$ ($1\leq p\leq \infty$) denote the classical Lebesgue space, Sobolev space, respectively. Without indications on the sets, the spaces are regarded as the ones of the functions on $\R_-$ and $\norm{\cdot}$ denotes for $L^2$-norm on $\R_-$. 
	    
    In the viscous case, we also consider the weighted Sobolev spaces: given weight function $w(x)$, $L^2_w$ denotes the space of measurable functions $f$ satisfying $\sqrt{w}f\in L^2$ with the norm 
	$$|f|_w=\norm{\sqrt{w}f}
	.$$
	When $w(x)=\langle x \rangle^\beta = (1+x^2)^{\beta/2} $, we also write $L^2_\beta:=L^2_w$ and $|\cdot|_\beta:=|\cdot|_w$ for simplicity.
	$H^l_\beta$ denotes the space of measurable functions $f$ satisfying $\p_x^k f\in L^2_{\beta+k}~(k=0,1,\ldots,l)$  with the norm 
	$$\norm{f}_{H^l_\beta}=\left(\sum_{k=0}^{l} |\p_x^k f|_{\beta+k}^2 \right)^{\frac{1}{2}}.$$
	
	For periodic functions $v(t)$ on $[0,T]$, denote $\hat{v}_k$ are the Fourier coefficients, i.e., 
	$$\hat{v}_k=\frac{1}{T}\int_{0}^{T} v(t) e^{-i\frac{2k\pi}{T} t} dt, \andd v(t)= \sum_{k\in\mathbb{Z}} \hat{v}_k e^{i\frac{2k\pi}{T} t}.$$
	For $1\leq p\leq \infty$ and $m\geq0$, the norms $\norm{\hat{v}}_{l^p}$, 
	$\norm{v}_{H^m_{\mathrm{per}}(0,T)}$
	is defined by 
	\begin{align*}
		&
		\norm{\hat{v}}_{l^p}= \left( \sum\limits_{k\in\mathbb{Z}}  \abs{\hat{v}_k}^p  \right)^{\frac{1}{p}},
		&&\norm{v}_{H^m_{\mathrm{per}}(0,T)}= \left( \sum\limits_{k\in\mathbb{Z}} (1+ (\frac{2k\pi}{T})^{2} )^m\abs{\hat{v}_k}^2  \right)^{\frac{1}{2}}.
	\end{align*}
	We use $c$ or $C$ to represent the generic (small or large) positive constant, and $f(x)\sim g(x)$ as $x\rightarrow a$  to represent $C^{-1}g \leq f\leq C g$  in a neighborhood of $a$.

\subsection{Main results for the inviscid shock} 

Before presenting the result, the time-periodic solution to the boundary value problem (BVP) \eqref{eqn-0} and \eqref{bc0} is needed.
\begin{lemma}[Inviscid time-periodic solutions]
	\label{lem-per-t0}
	Assume that $f$ is strictly convex, the boundary data $ u_b \in L^\infty $ is periodic in $t$ with period $ T>0 $ and satisfies the incoming boundary condition \eqref{bc-incoming}.
	Then there exists an unique time-periodic entropy solution $ \ur(x,t) $ with period $T$ to the boundary value problem \eqref{eqn-0} (for $\mu=0$) and \eqref{bc0}. Moreover, $\ur(x,t)$ satisfies that
	\begin{equation}
		\label{ineq-per-t0-1}
		 \int_{0}^{T} f(\ur(x,t))dt = \int_{0}^{T} f(u_b(t))dt, \quad
		\| \ur(x,\cdot) - \ubr \|_{L^\infty(\R)} \leq \frac{C}{|x|}, \quad \text{for all~} x<0,
	\end{equation}
	where $ \ubr := (f^{-1}) (T^{-1} \int_{0}^{T} f(u_b)dt )$ and $ C>0 $ is a constant independent of $ x $.
	In addition, for any point $t_b\in[0,p)$ such that
	\begin{equation}
		\label{ineq-per-t0-2}
		\int^{t_b}_0 [f(u_b(\tau))-f(\ubr)] d\tau=\max\limits_{t\in[0,T]} \int^t_0 [ f(u_b(\tau))-f(\ubr)] d\tau,
	\end{equation}
	and for each $N\in \mathbb{N}$, $u$	takes a constant value $ \ubr $ along the straight line $ x = f'(\ubr) (t -NT-t_b) $.
\end{lemma}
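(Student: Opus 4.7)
The plan is to handle each claim of the lemma by a dedicated argument rooted in entropy-solution and characteristic theory for convex scalar conservation laws. First, for existence and uniqueness of $\ur$, I would construct the entropy solution to the IBVP \eqref{eqn-0}--\eqref{bc0} (with $\mu=0$) starting from an arbitrary bounded initial datum (say $u_0\equiv\ubr$) via the classical Kruzhkov / Bardos--LeRoux--Nedelec theory on the half-line. The incoming condition \eqref{bc-incoming} forces the BLN boundary condition to reduce to the Dirichlet trace $u(0^-,t)=u_b(t)$, because all characteristic slopes $f'(u_b)$ point strictly into $\R_-$. Using the $L^1$-contraction of entropy solutions together with the fact that, under \eqref{bc-incoming}, every generalized backward characteristic from $(x,t)$ with $x<0$ reaches the boundary in time at most $|x|/\delta_b$, I would show that the period-map iterates $u(\cdot,NT)$ form a Cauchy sequence in $L^1_{\mathrm{loc}}$; the limit defines the $T$-periodic solution $\ur$. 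Uniqueness follows from the same contraction: two periodic solutions have an $L^1$-difference that is time-invariant but must vanish by the flush-out property. Once $\ur$ is known, integrating $\p_t u+\p_x f(u)=0$ over the rectangle $(x,0)\times(0,T)$ and using time-periodicity to kill the temporal integral yields
\[
\int_0^T f(\ur(x,t))\,dt=\int_0^T f(u_b(t))\,dt,
\]
which is the first identity in \eqref{ineq-per-t0-1} and justifies the definition of $\ubr$.

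For the decay rate $\|\ur(x,\cdot)-\ubr\|_{L^\infty}\leq C/|x|$, I would invoke an Oleinik-type one-sided Lipschitz estimate applied in the coordinate picture where $-x$ plays the role of the evolution variable (which is the natural time under the incoming flow). Strict convexity of $f$ together with the confining nature of the half-line problem gives, for any $x<0$ and $t_1<t_2$,
\[
\ur(x,t_2)-\ur(x,t_1)\leq \frac{C(t_2-t_1)}{|x|},
\]
with $C$ depending only on a lower bound of $f''$ on the range of $u_b$. Combined with $T$-periodicity of $\ur(x,\cdot)$ this bounds its total variation on $[0,T]$, hence also $\mathrm{osc}_{[0,T]}\ur(x,\cdot)\leq CT/|x|$. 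Then the flux identity together with strict convexity of $f$ forces the mean of $\ur(x,\cdot)$ to lie within $O(|x|^{-2})$ of $\ubr$, and the oscillation bound upgrades this to the desired pointwise estimate.

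Finally, for the straight-line characteristic set $F(t):=\int_0^t[f(u_b(\tau))-f(\ubr)]\,d\tau$; this is $T$-periodic by the flux identity, and letting $t_b\in[0,T)$ be any maximizer of $F$ yields exactly \eqref{ineq-per-t0-2}. Using Dafermos's generalized-characteristic calculus, I would show that the line $x=f'(\ubr)(t-NT-t_b)$ is admissible and carries the value $\ubr$. The extremality of $F$ at $t_b$ is precisely the non-crossing criterion: any competing backward characteristic from a point on this line would reach the boundary at a time $t'$ with accumulated $F(t')>F(t_b)$, contradicting the maximum. I expect the main obstacle to lie in this last step, since it requires carefully translating the extremal condition \eqref{ineq-per-t0-2} into the admissibility of a specific generalized characteristic and controlling the possible shock structures near $(0,NT+t_b)$; the decay step is also delicate because one must transfer Oleinik's estimate across the space--time swap dictated by the incoming boundary flow.
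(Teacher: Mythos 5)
Your proposal is conceptually on the right track and most of the individual steps could be made rigorous, but it takes a genuinely different (more piecemeal) route from the paper, and it omits the one structural observation that makes the paper's proof clean. The paper proves Lemma~\ref{lem-per-t0} by a single change of variables (Lemma~\ref{lem-interchange x&t}): under \eqref{bc-incoming} the map $v(\tilde x,\tilde t):=-f(u(-\tilde t,\tilde x))$ turns the time-periodic IBVP on the half-line into a \emph{space-periodic Cauchy problem} for a new strictly convex flux $g=(f^{-1})(-\cdot)$. After that, the flux identity, the $C/|x|$ decay, and the divide issuing from the maximizer $t_b$ all drop out of ready-made Cauchy-problem results (Lemma~\ref{lem-per-x} of Glimm--Lax and Lemma~\ref{lem-div} of Dafermos) with no further work. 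You, by contrast, try to re-derive each of these facts directly in the half-line picture: BLN theory plus $L^1$-contraction and flush-out for existence/uniqueness, a rectangle integration for the flux identity, a hand-transplanted Oleinik estimate in the ``$-x$ as time'' direction for the decay, and a direct generalized-characteristics argument for the divide. That is workable, but notice that you only swap the independent variables and never change the unknown. The reason the Oleinik estimate and the Dafermos divide criterion transfer so cleanly in the paper is precisely that the new unknown is $v=-f(u)$ and the new flux $g$ is strictly convex (one checks $g''=-f''/(f')^3>0$ using $f'<-\delta_b$); your one-sided Lipschitz inequality for $u$ and your ``non-crossing'' extremality argument for $t_b$ would each need this change of unknown to be traced through carefully, and as written they are assertions rather than derivations. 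Likewise, BLN/flush-out existence is not needed at all in the paper --- after the interchange it is just the ordinary Cauchy problem. So: same essential idea (the space--time swap), but you are re-proving known Cauchy lemmas in the boundary setting by hand, whereas the paper makes the reduction formal once and then reads everything off. The main gap to flag is the missing substitution $u\mapsto -f(u)$, without which the convexity you need for the Oleinik and divide steps does not come for free.
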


This lemma can be proved by interchanging the role of $t$ and $x$; see Lemma \ref{lem-interchange x&t}. To our best knowledge, this method can be traced back to \cite{Li.Y2017}, and we also refer interested readers to \cite{Yuan.2019} for more details. The complete proof of Lemma \ref{lem-per-t0} is given in \cref{sect-time-peri0}. 

\vline

The main result for the inviscid shock is stated as follows:
\begin{theorem}\label{thm-shock}
		 Assume that $\mu=0$, $f$ is strictly convex, $ \ubl>\ubr= (f^{-1}) (T^{-1} \int_{0}^{T} f(u_b)dt ) $, \eqref{RH} with $s<0$.  
		 Assume further that the boundary value $ u_b \in L^\infty(0,T) $ is periodic with period $T$, and satisfies \eqref{bc-incoming}, and that the initial data $u_0$ satisfies $u_0-\ubl \in L^\infty\cap L^1(\R_-)$ and 
		\begin{equation}
			\label{ic-incoming}
			f'(u_0(x))<0, \qquad \text{for~}x\in\R_-.
		\end{equation}
	 Then there exist a Lipschitz continuous curve $ X(t)\in \mathrm{Lip}(0, +\infty), $ which is unique after a finite time $ T_S>0$, such that the entropy solution $ u(x,t) $ to \eqref{eqn-0}, \eqref{bc0}, \eqref{ic0} satisfies that for all $ t>T_S, $
	\begin{equation}
		\label{ineq-thm-shock1}
		u(x,t) = 
		\begin{cases}
			\ubl+\mathcal{O}(1)t^{-1/2} & \text{ if } x<X(t), \\
			\ur(x,t)=\ubr+\mathcal{O}(1)x^{-1} & \text{ if } x>X(t).
		\end{cases}
	\end{equation}
	where $\ur(x,t)$ is the time-periodic entropy solutions to BVP \eqref{eqn-0} and \eqref{bc0} obtained in Lemma \ref{lem-per-t0}.
	Moreover, there exists a constant $ C>0, $ independent of time $ t, $ such that 
	\begin{equation}\label{ineq-shift}
		 \abs{X(t)-st-X_\infty} \leq \frac{C}{\sqrt{t}}, 
	\end{equation}
	with the constant shift $ X_\infty $ given by
	\begin{equation}\label{eq-finalshift}
		\begin{aligned}
			X_\infty := \frac{1}{\jump{\ub}} \Big\{ -& \int_{-\infty}^{0} \left( u_0(y)-\ubl\right)  dy  + \max\limits_{t\in[0,T]} \int^t_0 [ f(u_b(\tau))-f(\ubr)] d\tau \Big\}.
		\end{aligned}
	\end{equation}
\end{theorem}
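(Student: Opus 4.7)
The plan is to exploit the method of generalized characteristics for strictly convex scalar conservation laws, combined with the structure of the time-periodic solution $u_+$ from \cref{lem-per-t0}, and to derive the shift via a global mass-balance identity on $(-\infty,0)$.

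First I would set up the geometric picture. By strict convexity of $f$ and the incoming condition \eqref{bc-incoming}, all characteristics issued from the boundary enter the interior of the half-plane, and by the uniqueness part of \cref{lem-per-t0} the entropy solution must coincide with $u_+$ in the region these characteristics sweep. Under \eqref{ic-incoming}, characteristics from the initial line also propagate into the interior, moving leftward. I would define the shock $X(t)$ as the interface between these two regions. Standard Dafermos theory then yields, after some finite $T_S>0$, that $X(t)$ is Lipschitz and unique, satisfies the Rankine--Hugoniot ODE $\dot X=[f(u_L)-f(u_R)]/[u_L-u_R]$, and $u\equiv u_+$ on $\{X(t)<x<0\}$ for $t>T_S$.

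Second, I would establish the two decay rates. The right-side bound $|u-\ubr|\le C/|x|$ is directly the estimate of \cref{lem-per-t0}. For the left side, I would use the Lax--Oleinik variational representation $u(x,t)=(f')^{-1}((x-y_*)/t)$, where $y_*$ minimises $tL((x-y)/t)+U_0(y)$ ($L$ the Legendre dual of $f$, $U_0(y):=\int_{-\infty}^y u_0$). Since $u_0-\ubl\in L^1$, one has $U_0(y)=\ubl y+\delta(y)$ with $\delta\in L^\infty$; a Laplace argument at the quadratic minimum of $tL((x-y)/t)+\ubl y$, whose curvature at the critical point is $O(1/t)$, then shows that the bounded perturbation $\delta$ shifts the minimiser by at most $O(\sqrt t)$, giving $(x-y_*)/t=f'(\ubl)+O(t^{-1/2})$ and hence $|u(x,t)-\ubl|=O(t^{-1/2})$ uniformly in $x<X(t)$.

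Finally, for the shift I would use conservation on $(-\infty,0)$. Integrating $\partial_t u=-\partial_x f(u)$, using $u(-\infty,t)=\ubl$ and the R--H identity $s[\ub]=[f(\ub)]$, gives
\[
\int_{-\infty}^0(u-\ubl)\,dx=\int_{-\infty}^0(u_0-\ubl)\,dy-st[\ub]-P(t), \qquad P(t):=\int_0^t\bigl[f(u_b)-f(\ubr)\bigr]\,d\tau.
\]
Splitting the left-hand side at $x=X(t)$ and using $u=u_+$ on the right rearranges this into
\[
[\ub]\bigl(X(t)-st\bigr)=-\int_{-\infty}^0(u_0-\ubl)\,dy+P(t)+\int_{-\infty}^{X(t)}(u-\ubl)\,dx+\int_{X(t)}^0(u_+-\ubr)\,dx.
\]
The $\int_{-\infty}^{X(t)}$ remainder is $O(t^{-1/2})$ by the left-side decay combined with the $L^1$ structure of $u_0-\ubl$. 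The principal obstacle is to show that $P(t)+\int_{X(t)}^0(u_+-\ubr)\,dx$ converges to $M:=\max_{[0,T]}P$ at rate $O(t^{-1/2})$; this is subtle because $P$ is $T$-periodic with zero mean and does not converge on its own. I would attack this by differentiating $Q(t):=\int_{X(t)}^0(u_+-\ubr)\,dx$ via the PDE for $u_+$ to obtain $(P+Q)'(t)=f(v(t))-f(\ubr)-(v(t)-\ubr)\dot X(t)$ with $v(t):=u_+(X(t),t)$, which vanishes whenever $X(t)$ hits a characteristic line $x=f'(\ubr)(t-NT-t_b)$ of \cref{lem-per-t0} (where $v=\ubr$); the extremality of $t_b$ in \eqref{ineq-per-t0-2} then fixes the limit at $M$, and the $O(t^{-1/2})$ residual rate comes from the $O(1/|X(t)|)$ control on $v-\ubr$ along the shock.
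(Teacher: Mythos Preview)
Your overall strategy is sound and your mass-balance identity
\[
\jump{\ub}\bigl(X(t)-st\bigr)=-\int_{-\infty}^0(u_0-\ubl)\,dy+P(t)+\int_{-\infty}^{X(t)}(u-\ubl)\,dx+\int_{X(t)}^0(u_+-\ubr)\,dx
\]
is correct.  However, two of the steps you single out as ``to be done'' are genuine gaps, not routine.

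\textbf{The geometric setup.}  The claim that a single Lipschitz interface emerges after finite time is not ``standard Dafermos theory''.  In the paper this takes real work: one first shows (\cref{lem-gluing}) that $u=u_-$ to the left of $X_1^*:=\min\{X_1,\Gamma_1\}$ and $u=u_+$ to the right of $X_2^*:=\max\{X_2,\Gamma_2\}$, where $X_{1,2}$ are the extremal forward characteristics from the origin and $\Gamma_{1,2}$ are divides of $u_\mp$.  Then a differential inequality $D'(t)\le\theta_0 D(t)/t+\tfrac{1-\theta_0}{2}(f'(\ubr)-f'(\ubl))$ for $D=X_2^*-X_1^*$ forces $D\equiv0$ after some $T_S$ (\cref{lem-coincide}).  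You should not skip this.

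\textbf{The shift estimate.}  Here your route diverges substantially from the paper's and runs into trouble.  First, ``the $\int_{-\infty}^{X(t)}$ remainder is $O(t^{-1/2})$ by the left-side decay combined with the $L^1$ structure'' is not a proof: the $L^\infty$ bound $|u-\ubl|\le Ct^{-1/2}$ over an interval of length $\sim|s-f'(\ubl)|\,t$ gives only $O(t^{1/2})$, and the $L^1$ bound alone gives $O(1)$.  One actually needs N-wave localisation near the divide $\Gamma_1$ to see that this integral tends to $0$.  Second, and more seriously, your argument for $P+Q\to M$ does not close.  Your formula $(P+Q)'=f(v)-f(\ubr)-(v-\ubr)\dot X$ is correct, but since $\dot X=s+O(t^{-1/2})$ and $v-\ubr=O(1/|X(t)|)=O(1/t)$ this gives $(P+Q)'=(f'(\ubr)-s)(v-\ubr)+O(t^{-3/2})$ with $f'(\ubr)-s\ne0$.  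The leading term is $O(1/t)$ and oscillates; showing it integrates to a finite limit---let alone identifying that limit as $M$---requires exactly the kind of structural information about $u_+$ that you have not used.  The sentence ``the extremality of $t_b$ then fixes the limit at $M$'' is not an argument.

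The paper avoids both difficulties by never integrating over $(-\infty,0)$.  Instead it applies Green's theorem to the \emph{pentagon} with sides along the straight line $\xi_1(\tau)=X(t)+f'(\ubl)(\tau-t)$, the initial axis, the boundary $x=0$ up to time $mT+t_b$, the divide $\xi_2$ of $u_+$ issuing from $(0,mT+t_b)$, and the top $\{t\}\times[X(t),\xi_2(t)]$.  Along $\xi_2$ one has $u_+\equiv\ubr$ exactly, so that side contributes a clean closed form; along $\xi_1$ the needed limit is the content of \cref{lem-bdd-divides}; and the boundary integral $\int_0^{mT+t_b}f(u_b)\,d\tau$ produces $M$ directly because $t_b$ is the maximiser in \eqref{ineq-per-t0-2}.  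The top segment has length $O(1)$ and $u_+-\ubr=O(1/|X(t)|)$ there, giving the $O(t^{-1})$ contribution.  This is what makes the paper's computation short; your approach would need to reconstruct all of this indirectly.
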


The incoming boundary condition \eqref{bc-incoming} plays an important role in enabling the method of interchanging the role of $t$ and $x$ in Lemma \ref{lem-per-t0}. While \eqref{ic-incoming} is assumed for simplicity, which just avoids the intersections and reflections of the forward characteristics with the boundary, and thus can be replaced or removed in some other special cases; for instance, it can be replaced by the smallness assumption on the initial perturbation around the shock.

\begin{remark}
	\eqref{ineq-thm-shock1} and \eqref{ineq-shift} indicate that $X(t)$ is exactly the perturbed shock curve after a large time. 
	\eqref{ineq-thm-shock1} demonstrates that there is not any transitional region between the shock and time-periodic boundary solution (induced by time-periodic boundary), showing that these boundary effects manifest as a propagating ``boundary wave'' rather than a boundary layer. Such a boundary wave will not decay as time goes to infinity.
\end{remark}

\begin{remark}
	The time periodic boundary also has a ``cumulative'' effect on the shock wave, i.e., the second integral of the final shift $X_{\infty}$. By the definition of $\ub_+$, there is no excessive flux on every single temporal period. However, the accumulation of an infinite number of perturbations produces a nontrivial shift. This effect has been observed in our previous works on the problems of spatial periodic perturbations (\cite{Xin.Y.Y2019,Xin.Y.Y2021}).
\end{remark}

\subsection{Main results for the viscous shock profile} 

We first introduce the viscous shock and the boundary wave (i.e. the time-periodic solution to the boundary value problem (BVP) \eqref{eqn-0} and \eqref{bc0}), and then present the lemma and the theorem.
 
\begin{lemma}[\cite{Liu.N1997}]\label{lem-shock-prop}
	Assume that the Rankine-Hugoniot condition \eqref{RH} and the Oleinik entropy condition
	\begin{equation}
		\label{E} \tag{E}
		f_0(\phi):=-s(\phi-\ub_\pm)+f(\phi)-f(\ub_\pm)\begin{cases}
			<0 & \text{if~~} \ubr<\phi<\ubl,\\
			>0 & \text{if~~} \ubl<\phi<\ubr,
		\end{cases}
	\end{equation}
	hold. Then \eqref{eqn-0} (with $\mu=1$) admits a unique traveling wave solution (up to a shift) $\phi^S (x-st)$ satisfying 
	\begin{equation}
		\label{eqn-shock1}
		(\phi^S)'(\xi)=f_0(\phi^S(\xi)),\quad \lim_{\xi\rightarrow\pm\infty}\phi^S(\xi)=\ub_\pm.
	\end{equation}
	Moreover, for \emph{non-degenerate} shock, i.e. \eqref{non-degenerate-shock}, the shock profile $\phi^S$ satisfies
	\begin{equation}\label{ineq-shock1}
		\abs{ \p_\xi^k (\phi^S(\xi)-\ub_\pm)} 
		\sim  e^{-\theta_s \abs{ \xi} }
		\qquad \text{as~}\xi \rightarrow \pm \infty
	\end{equation}
	for $k=0,1$ and $\theta_s$ is a positive constant.
\end{lemma}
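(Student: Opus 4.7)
Substituting the ansatz $u(x,t)=\phi^S(\xi)$ with $\xi=x-st$ into \eqref{eqn-0} with $\mu=1$ yields $\phi''=(f(\phi)-s\phi)'$. Integrating once on $(-\infty,\xi]$ and using that $\phi\to \ubl$, $\phi'\to 0$ as $\xi\to-\infty$ gives
\begin{equation*}
	\phi'(\xi)=f(\phi)-f(\ubl)-s(\phi-\ubl)=f_0(\phi),
\end{equation*}
and the Rankine--Hugoniot condition \eqref{RH} guarantees that the same identity holds if one integrates from $+\infty$ using $\ubr$ instead, so the first equation in \eqref{eqn-shock1} is consistent with both asymptotic conditions.

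For existence, I would study the autonomous scalar ODE $\phi'=f_0(\phi)$ as a phase-line problem. By the Oleinik entropy condition \eqref{E}, $f_0$ vanishes at $\ub_\pm$ and has a definite sign on the open interval between them (negative in the case $\ubr<\ubl$); hence any solution launched from an interior point decreases monotonically, is trapped in $(\ubr,\ubl)$, and by standard $\omega$-limit set arguments for autonomous scalar ODEs must satisfy $\phi(\xi)\to\ubr$ as $\xi\to+\infty$ and $\phi(\xi)\to\ubl$ as $\xi\to-\infty$. Uniqueness up to translation in $\xi$ is immediate from autonomy.

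For the exponential decay \eqref{ineq-shock1}, I would linearize at the endpoints. A direct computation gives $f_0'(\ub_\pm)=f'(\ub_\pm)-s$, so the non-degenerate shock condition \eqref{non-degenerate-shock} yields
\begin{equation*}
	f_0'(\ubl)=f'(\ubl)-s>0,\qquad f_0'(\ubr)=f'(\ubr)-s<0.
\end{equation*}
Hence $\ubl$ is a hyperbolic unstable equilibrium and $\ubr$ a hyperbolic stable one. Setting $\theta_s:=\min\{f'(\ubl)-s,\;s-f'(\ubr)\}>0$ and writing $\psi_\pm(\xi):=\phi^S(\xi)-\ub_\pm$, a Taylor expansion gives $\psi_\pm'=f_0'(\ub_\pm)\psi_\pm+O(\psi_\pm^2)$ near the corresponding infinity; a standard Gronwall/comparison argument (or invariant manifold theorem applied to the one-dimensional stable/unstable manifolds) then yields the two-sided bound $|\psi_\pm(\xi)|\sim e^{-\theta_s|\xi|}$ at $\xi\to\pm\infty$. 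The estimate for the derivative ($k=1$) follows with no extra work from the identity $(\phi^S)'(\xi)=f_0(\phi^S(\xi))$ and $|f_0(\phi)|\sim|\phi-\ub_\pm|$ near $\ub_\pm$.

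The only place where something could go wrong is in the decay step: the two-sided equivalence $\sim$ (not just $\lesssim$) requires that $\phi^S$ approach the equilibria along the genuinely hyperbolic direction, which is automatic here because the one-dimensional stable/unstable manifolds are tangent to the eigendirection with eigenvalue $f_0'(\ub_\pm)\neq 0$; equivalently, $\phi^S\not\equiv \ub_\pm$ on any half-line, which is guaranteed by the definition of the connecting orbit. Aside from this observation, the argument is routine ODE analysis, consistent with the cited reference \cite{Liu.N1997}.
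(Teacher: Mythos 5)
The paper does not actually prove this lemma: it is stated with a citation to \cite{Liu.N1997}, and the text only remarks that \cite{Matsu.N1994a} contains the proof in the degenerate case. Your argument is therefore not competing with an in-paper proof, but it is the standard one and is essentially correct: the reduction to the scalar autonomous ODE $\phi'=f_0(\phi)$ on the phase line, existence of the heteroclinic connection by monotonicity and the fact that $f_0$ has a definite sign on the open interval between $\ubr$ and $\ubl$, uniqueness up to translation from autonomy, and exponential decay from linearization at the hyperbolic endpoints $f_0'(\ub_\pm)=f'(\ub_\pm)-s$. This matches what one finds in the cited references.

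One small imprecision worth fixing concerns the constant $\theta_s$. You set $\theta_s:=\min\{f'(\ubl)-s,\,s-f'(\ubr)\}$ and then assert the two-sided equivalence $|\psi_\pm(\xi)|\sim e^{-\theta_s|\xi|}$ at both ends. With the paper's convention that $f\sim g$ means $C^{-1}g\le f\le Cg$, the $\min$ only yields the \emph{upper} bound at both ends: if, say, $f'(\ubl)-s>s-f'(\ubr)=\theta_s$, then as $\xi\to-\infty$ one has $|\phi^S(\xi)-\ubl|\sim e^{-(f'(\ubl)-s)|\xi|}$, which is strictly smaller than $e^{-\theta_s|\xi|}$, so the lower bound fails at that end. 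The genuine two-sided rate at $\xi\to\pm\infty$ is $|f_0'(\ub_\pm)|=|f'(\ub_\pm)-s|$, and these are different in general. So you should either record two constants $\theta_s^\pm:=|f'(\ub_\pm)-s|$ and state $|\psi_\pm(\xi)|\sim e^{-\theta_s^\pm|\xi|}$, or (if a single constant is preferred) downgrade to a one-sided bound with the $\min$. The paper's own statement has the same looseness, and in its later applications only one-sided bounds and a positive lower bound on one side of the profile are actually used, so this does not propagate to an error; but your write-up as it stands overstates what your choice of $\theta_s$ delivers.
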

When $s=f'(\ub_\pm)$, the shock is called \emph{degenerate}, and \eqref{ineq-shock1} is quite different. We refer the reader to \cite{Matsu.N1994a} for the detailed proof.


\vline

The solution space $\mathcal{T}^m_{\gamma,M}$ for the time-periodic solutions is
\begin{align*}
	\mathcal{T}^m_{\gamma,M}= \big\{ u\in C(\R_-; H^m_{\mathrm{per}}(0,T)); &~\exists \ub \in \R \text{ s.t. for integers~} i, j \text{~satisfying~}0\leq i+\frac{j}{2}\leq m, 
	\\
	&~   \sup_{x\in\R_-} \{e^{-\gamma x} \norm{ \p_t^i\p_x^j (u-\ub) }_{H^{m-i-\frac{j}{2}}_{\mathrm{per}}(0,T)}\} <M  \big\}.
\end{align*}
for constants $m\geq1, \gamma>0, M>0$.

\begin{lemma}[Viscous time-periodic solutions]
	\label{lem-per-t1}
	Assume that $ u_b\in$ $ H^1_{\mathrm{per}}(0,T) $ is periodic with period $T$ and average $ \ub_b = \frac{1}{T} \int_{0}^{T} u_b(t)dt$, and satisfies the incoming boundary condition \eqref{bc-incoming}.
	There exist positive constants $\theta_b, \nu_b$ such that if $\norm{ u_b-\ub_b }_{H^1_{\mathrm{per}}(0,T)}\leq \nu_b$, then the boundary value problem \eqref{eqn-0} (with $\mu=1$) and \eqref{bc0}  
	admits a unique time-periodic solution $\ur\in 
	\mathcal{T}^1_{\theta_b, \nu_b}$ of period $T$. 
	Moreover, there exists a constant $\ubr$ such that
	\begin{equation}
		\label{ineq-per-t1}
		\sum\limits_{0\leq i+j/2\leq m}\norm{\p_t^i\p_x^j (\ur-\ubr)  }_{
			{H}^{m-i-\frac{j}{2}}_{\mathrm{per}}(0,T)} 
		\leq C e^{ \theta_b x} \norm{ u_b -\ub_b  }_{H^m_{\mathrm{per}}(0,T)}.
	\end{equation}
\end{lemma}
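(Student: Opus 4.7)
My plan is a linearize-and-iterate scheme carried out in Fourier series in the periodic variable $t$; this is the viscous analogue of the $t\leftrightarrow x$ interchange used to prove \cref{lem-per-t0}. The parabolic symbol in $(x,\p_t)$ is nondegenerate, so the ODE in $x$ obtained mode by mode is well posed, while the incoming condition $f'(u_b)<-\delta_b$ ensures that the transport part has the correct sign at $\ubr$ throughout a neighbourhood of $\bar u_b$.

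I first let $\ubr$ be a far-field constant, close to $\bar u_b$, to be determined, and write $u=\ubr+v$, so that $v$ solves
\begin{equation*}
\p_t v-\p_x^2 v+f'(\ubr)\p_x v=-\p_x N(v),\quad v(0,t)=u_b(t)-\ubr,\quad v(x,\cdot)\to 0\ \text{as}\ x\to-\infty,
\end{equation*}
with quadratic remainder $N(v):=f(\ubr+v)-f(\ubr)-f'(\ubr)v=O(v^2)$. Expanding $v(x,t)=\sum_{k\in\mathbb Z}\hat v_k(x)e^{i\omega_k t}$ with $\omega_k=2k\pi/T$, each Fourier coefficient satisfies an inhomogeneous second-order ODE in $x$ whose characteristic roots are $\lambda_k^{\pm}=\tfrac12(f'(\ubr)\pm\sqrt{f'(\ubr)^2+4i\omega_k})$. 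Since $f'(\ubr)\le -\delta_b<0$ and $|\omega_k|\ge 2\pi/T$ for every $k\ne 0$, one has $\Re\lambda_k^-\le -\delta_b$ and $\theta_b:=\inf_{k\ne 0}\Re\lambda_k^+>0$, with $\Re\lambda_k^+\gtrsim |\omega_k|^{1/2}$ for large $|k|$; the admissible decaying mode at $-\infty$ is thus the one carried by $e^{\lambda_k^+x}$. The zero mode is special: its characteristic roots are $0$ and $f'(\ubr)$, neither of which gives decay, so the requirement $\hat v_0(x)\to 0$ at $-\infty$ becomes a scalar compatibility condition that self-consistently fixes $\ubr$ via the time-averaged identity $f(\ubr)=T^{-1}\int_0^T f(u_b)\,dt-\p_x\bar u(0)$.

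I then build the linear solution via the Green's function constructed from $e^{\lambda_k^\pm x}$,
\begin{equation*}
\hat v_k(x)=\hat u_{b,k}\,e^{\lambda_k^+ x}+\int_{-\infty}^0 G_k(x,y)\,\p_y\widehat{N(v)}_k(y)\,dy \qquad (k\ne 0),
\end{equation*}
and extract the pointwise-in-$x$ bound $|\hat v_k(x)|+(1+|\omega_k|)^{-1/2}|\hat v_k'(x)|\lesssim e^{\theta_b x}|\hat u_{b,k}|+\cdots$. Summing via Parseval converts these mode estimates into the mixed-regularity bound \eqref{ineq-per-t1}; the half-power of $|\omega_k|$ absorbed in the norm reflects the parabolic scaling $\p_t\sim\p_x^2$ encoded in the $i+j/2\le m$ weighting built into $\mathcal T^m_{\gamma,M}$. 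Nonlinear existence and uniqueness then follow from a Banach fixed point on the ball $\{v:\|v\|_{\mathcal T^1_{\theta_b,\nu_b}}\le C\nu_b\}$, where the quadratic gain in $N$ supplies the contraction once $\nu_b$ is sufficiently small; the same iteration determines $\ubr=\bar u_b+O(\nu_b^2)$ through the zero-mode compatibility.

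The main obstacle is the interplay between the zero mode and the oscillatory modes: decay at $-\infty$ of $\hat v_0$ is imposed by the scalar compatibility condition rather than by a positive characteristic root, so one must simultaneously solve for $\ubr$ and show that the resulting $\hat v_0(x)$ decays exponentially at a rate compatible with the uniform $\theta_b$ governing the $k\ne 0$ modes. Once this is settled and the uniform Green's function estimate is in hand, promoting the construction to higher regularity $m\ge 1$ is routine bootstrapping on the ODE, completing the proof.
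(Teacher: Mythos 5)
Your plan is essentially the paper's: Fourier series in $t$, a constant-coefficient second-order ODE in $x$ for each mode, a Green's function built from the two characteristic roots (one growing, one decaying at $-\infty$), special handling of the $k=0$ mode, and a Banach fixed point closed by the quadratic smallness of the nonlinearity. The one genuine deviation is the linearization point: you expand about the unknown far-field constant $\ubr$ and impose $\hat v_0(x)\to 0$, so $\ubr$ is pinned by a scalar compatibility identity; the paper instead expands about the \emph{known} average $\ub_b$, so the linear operator $\p_x^2-f'(\ub_b)\p_x-\p_t$ is fixed once and for all, the zero mode is permitted to tend to a nonzero limit $\hat v_{0,\infty}$ at $-\infty$ (the boundary data enforce $\hat v_0(0)=0$), and $\ubr:=\ub_b+\hat v_{0,\infty}$ simply drops out after the contraction converges. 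Your variant carries a mild circularity the paper sidesteps --- $f'(\ubr)$, hence the roots $\lambda_k^\pm$, the rate $\theta_b$, and the Green's kernel all depend on the unknown $\ubr$, so the iteration must also move the linearization point (or one must verify a posteriori that the estimates are stable under the $O(\nu_b^2)$ perturbation $\ubr-\ub_b$). That is workable but is an extra layer; centering at $\ub_b$ makes the nonzero far-field value of $\hat v_0$ just one more unknown delivered by the fixed point. Your identification of the zero mode as the crux is exactly right, and the remaining steps --- Parseval converting the mode-by-mode Green's estimates into the mixed $i+j/2$ weighted norms, then iteration to promote the regularity to general $m$ --- match the paper's Step~2.
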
 
This lemma cannot be directly derived by  interchanging the role of $x$ and $t$ due to the viscous term. In contrast, the proof is proved by Fourier transform with respect to time variable and a suitable iteration argument.
For Navier--Stokes equations, more complicated techniques are needed to study of time-periodic solutions; see \cite{Brezi.K2012,Feire.M.P.S1999,Feire.G.S2023,Jin.Y2015,Kagei.T2015a,Ma.U.Y2010,Tsuda.2016} and the references therein. The proof is provided in \cref{sect-time-peri1}.

\vline


Now we construct the \emph{ansatz} $\usharp$, which plays an important role in the anti-derivative argument for stability analyses of Riemann solutions in previous works.
Technically, $\usharp$ is set such that $\int_{-\infty}^{0} u(x,t)-\usharp(x,t) dx=0$ and $\abs{u-\usharp}\rightarrow 0 $ as $t\rightarrow\infty$. Therefore,  $\usharp$
has to capture the propagation of the shock profile $\phi^S(\xi)$ and the time-periodic boundary wave $\ur$ obtained in Lemma \ref{lem-per-t1}, as well as their transitions.

Note that $\phi^S$ can be decomposed as 
\begin{equation*}
	\phi^S(\xi)=\ubl(1-\sigma(\xi))+\ubr\sigma(\xi),\quad
\end{equation*}
where $\sigma$ is the weighted function induced by $\phi^S$, i.e., 
\begin{equation*}
	\label{def-weight}
	\sigma(\xi):=\dfrac{\phi^S(\xi)-\ubl}{\ubr-\ubl}=\dfrac{f(\phi^S(\xi))-f(\ubl)-\p_\xi \phi^S(\xi)}{f(\ubr)-f(\ubl)}.
\end{equation*} 
Given $\ur$ obtained in Lemma \ref{lem-per-t1}, and inspired by the observations in the inviscid case and \cite{Xin.Y.Y2021}, we construct the ansatz as
\begin{equation}
	\label{def-ansatz-shock}
	\usharp:=\ubl (1-\sigma_{\X}) + \ur \sigma_{\X} = \phi^S_{\X} + (\ur-\ubr) \sigma_{\X}.
\end{equation}
where $\X=\X(t)$ is the shift function to be determined, and
we adopt the convention that a subscript $\X$ denotes a spatial shift by $st + \X$: for any function $g(\xi)$,
\begin{equation*}
	\label{def-convention-X}
	g_{\X}(x,t)=g(x-st-\X).
\end{equation*}
Subscripts $x$ or $t$, however, still denote partial differentiation with respect to $x$ or $t$.

Consequently, the anti-derivative is defined as 
\begin{equation}
	\label{def-anti}
	U(x,t)=\int_{-\infty}^x (u-\usharp)(y,t) dy \,, \quad U_0(x,t)=\int_{-\infty}^x (u-\usharp)(y,0) dy.
\end{equation}



\begin{theorem}
	\label{thm-shock-viscous}
	Assume $\mu=1$, and that $ u_b\in$ $ H^1_{\mathrm{per}}(0,T) $ is periodic with period $T$ and average $ \ub_b$, and that $\ur(x,t)$ is the time-periodic entropy solution in Lemma \ref{lem-per-t1} with $\ur(x,t)\rightarrow \ubr $ as 	$x\rightarrow-\infty$. Furthermore, assume \eqref{RH} with $s<0$, \eqref{E}, and that the shock is non-degenerate, i.e. \eqref{non-degenerate-shock}
	. 
	
	Then there exists $ \varepsilon_0>0$ such that if
	\begin{equation}
		\label{small-perturbation}
		\norm{U_0}_{H_{\beta}^3}+|\X_0|^{-1} + \norm{u_b(t)-\ub_b}_{H^3_{\mathrm{per}}(0,T)} < \varepsilon_0,\quad 
	\end{equation}  
	for $\beta>0$, 
	then the problem \eqref{eqn-0}, \eqref{bc0} and \eqref{ic0} admits a unique global solution $ u(x,t) $ satisfying
	\begin{align*}
		&u(x,t)-\usharp(x,t)\in C([0,\infty); H_{\beta+1}^2) \cap  L^2(0,\infty; H_{\beta}^3),\\
		&\p_x u(0,t)- \p_x \usharp (0,t) \in C[0,\infty) \cap  L^2(0,\infty), 
	\end{align*}
	and the asymptotic behavior
	\begin{equation}
		\label{asymptotic behavior}
		\sup_{x\in\R_-} |u(x,t)-(\phi^S_{\X_{\infty}}+\ur-\ubr)(x,t)|\rightarrow 0 \qquad \text{as~}t\rightarrow\infty
	\end{equation}	
	for some constant $\X_{\infty}$,
    where $\usharp$ is defined in \eqref{def-ansatz-shock} and the phase shift $\X(t)$ is given by 
    \begin{equation}
    	\label{eqn-shift1'}
    	\begin{aligned}
    		\Big(\int_{-\infty}^0 \jump{u} \p_x \sigma_{\X} dx\Big)&  \X'(t) = \Big( f(\ubr)-f(\phi^S_{\X})\big|_{x=0}-\p_x u(0,t) + \p_x \usharp (0,t) \Big)   \\
    		& +\int_{-\infty}^0 \{ - (\ur - \ubr) s+ f(\ur)-f(\ubr)-\p_x \ur \} \p_x\sigma_{\X} dx\\
    		& +\Big( (f(u_b)-f(\ubr))(1-\sigma_{\X}) - (u_b-\ubr) \p_x\sigma_{\X}  \Big)\big|_{x=0} \,	,
    			\\
    			&\X(0)=\X_0.
    	\end{aligned}
    \end{equation}    	
\end{theorem}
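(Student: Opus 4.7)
The plan is to employ the classical anti-derivative technique for viscous shock stability, adapted to the presence of the boundary wave $\ur-\ubr$. Set $\psi := u-\usharp$ and $U(x,t) := \int_{-\infty}^x \psi(y,t)\,dy$, and derive a scalar equation
\[
U_t + a(x,t)\,U_x - U_{xx} = N(U,U_x) + F(x,t),
\]
where $a(x,t) = \int_0^1 f'(\usharp + \theta\psi)\,d\theta$, the nonlinearity $N$ is quadratic in $(U,U_x)$, and the source $F$ collects the residual $L[\usharp] := \p_t\usharp + \p_x f(\usharp) - \p_{xx}\usharp$ together with the $\X'(t)$-terms. The ODE \eqref{eqn-shift1'} for $\X(t)$ is precisely the choice that enforces $U(0,t)\equiv 0$ for all $t\ge 0$, so that $U$ is a genuine primitive of $\psi$ on $\R_-$; the leading factor $\int_{-\infty}^0 \jump{\ub}\p_x\sigma_\X\,dx$ on the left of \eqref{eqn-shift1'} is the linearization in $\X$ of $\int_{-\infty}^0 \usharp\,dx$, and the right side is $\dt \int_{-\infty}^0 (u-\usharp)\,dx$ computed from the PDE.

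A key preliminary step is to analyze $L[\usharp]$ carefully. Using $\p_t \phi^S_\X = -(s+\X')(\phi^S)'_\X$ and the fact that $\ur$ solves \eqref{eqn-0} with $\mu=1$, one decomposes $L[\usharp]$ into $\X'$-terms (absorbed by the ODE for $\X$), transition terms supported where $\sigma_\X$ varies, and quadratic couplings between $\ur-\ubr$ and $\phi^S_\X-\ubr$. By Lemma \ref{lem-shock-prop} the shock profile decays exponentially as $\xi\to +\infty$, and by Lemma \ref{lem-per-t1} the boundary wave decays exponentially as $x\to-\infty$; since $s<0$ keeps the transition region $x\approx st+\X$ moving to the left while $\ur-\ubr$ is concentrated near $x=0$, every product of the two profiles is exponentially small in both $t$ and $x$. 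Likewise, the boundary mismatch $u(0,t)-\usharp(0,t)=(u_b(t)-\ubr)(1-\sigma_\X(0,t))$ becomes exponentially small once $-st-\X(t)$ is large.

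The heart of the proof is a weighted a priori estimate in $H^3_\beta$ for $U$, coupled with the bound $|\X(t)|+|\X'(t)|\le C\varepsilon_0$. Multiplying the $U$-equation by $\langle x\rangle^\beta U$ and integrating on $\R_-$, the principal part yields, after integration by parts, a dissipative quadratic form
\[
\int_{\R_-} \tfrac{1}{2}\bigl(-a_x(x,t)\bigr) U^2 \langle x\rangle^\beta\,dx,
\]
which is positive on the support of $(\phi^S)'_\X$ thanks to the Lax condition \eqref{non-degenerate-shock}; this is exactly the Matsumura--Nishihara/Kawashima structure. The parabolic term yields $\|U_x\|_{L^2_\beta}^2$ up to a controlled weight commutator, the nonlinearity is absorbed via \eqref{small-perturbation}, and the forcing $F$ is controlled by the exponential decay of the interaction terms. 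Applying the same strategy to $\p_x^k U$ for $k=1,2,3$ closes the $H^3_\beta$ estimate, while \eqref{eqn-shift1'} yields $|\X'(t)|\le C(\|\psi(t)\|_{H^1}+e^{-ct})$, hence an $L^1_t$ bound on $\X'$ and convergence of $\X(t)$ to some $\X_\infty$. Combined with standard local-in-time existence in $H^3_\beta$ by parabolic regularity, a continuation argument yields the global solution.

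The asymptotic behavior \eqref{asymptotic behavior} then follows in the usual manner: the integrability $\int_0^\infty \|\psi(t)\|_{H^1}^2\,dt < \infty$ and the boundedness of $\dt \|\psi\|_{H^1}^2$ force $\|\psi(t)\|_{H^1}\to 0$, whence $\|\psi(t)\|_{L^\infty}\to 0$ by Sobolev embedding; together with $\X(t)\to\X_\infty$, this gives \eqref{asymptotic behavior}. The main obstacle I expect is the detailed algebra of $L[\usharp]$ near $x=0$ and the verification that the right-hand side of \eqref{eqn-shift1'} leaves no residual non-integrable source in $F$. Guessing the correct ODE for $\X$, so that both the bulk residual and the boundary trace are simultaneously compatible, is the crucial new ingredient compared with the constant-boundary analysis of \cite{Liu.N1997}, and it is precisely where the intuition from the inviscid Theorem \ref{thm-shock} and from the spatially periodic framework of \cite{Xin.Y.Y2019,Xin.Y.Y2021} enters.
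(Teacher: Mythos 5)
Your overall strategy is the paper's: antiderivative reformulation, the ODE \eqref{eqn-shift1'} for $\X(t)$ enforced by $\int_{-\infty}^0(u-u^\sharp)\,dx\equiv 0$, decomposition of the residual $L[u^\sharp]$ into $\X'$-terms plus interaction terms that decay exponentially in both $x$ and $t$ because the shock moves left while $\ur-\ubr$ is concentrated near $x=0$, and weighted energy estimates combined with local existence. But two steps of your sketch as written will not close.

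First, the quadratic form $\int_{\R_-}\tfrac12(-a_x)U^2\langle x\rangle^\beta\,dx$ is not sign-definite here. It is positive only when $f$ is convex (so that $-a_x\approx -f''(\phi^S)\p_x\phi^S\ge 0$), but the theorem assumes only the Oleinik condition (E) and non-degeneracy, not convexity. The correct structure (the Matsumura--Nishihara weight that the paper uses) is obtained by multiplying by $w(\phi^S_\X)\langle\xi-\xi_*\rangle^\beta U$ with $w(\phi)=(\phi-\ubl)(\phi-\ubr)/f_0(\phi)$; then $(wf_0)''\equiv 2>0$ produces the dissipative zeroth-order term $\int\tfrac12(wf_0)''(-\p_x\phi^S_\X)U^2$, regardless of $\mathrm{sgn}\,f''$. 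Note also that the polynomial weight must be centered at $\xi=x-st-\X$, not at $x$.

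Second, and more seriously, your plan does not account for the boundary trace $\p_x^2 U(0,t)$, which enters the right side of \eqref{eqn-shift1'} through $-\p_x^2 U(0,t)=\p_x u^\sharp(0,t)-\p_x u(0,t)$ and hence the source $H_2$ of the $U$-equation. Since $H_2$ is a primitive of $h_2$ and $H_2(0,t)=\mathcal{O}(|\X'|+e^{-c(|\X_0|+t)}+|\p_x^2 U(0,t)|)$ does not itself vanish, the integral $\int_{st+\X}^0 H_2\,U\,dx$ produces a term of size $(1+|\X_0|+t)^{3/2}|\p_x^2 U(0,t)|\,\|\p_x U\|$. Absorbing it requires carrying $(1+|\X_0|+t)^{\beta+3}|\p_x^2 U(0,t)|^2$ as part of the a priori quantity and generating the matching time-weighted dissipation at the boundary; this is achieved only at the top-order ($\p_x^3 U$) estimate, after rewriting $\p_x^3 U(0,t)$ and $\p_x^4 U(0,t)$ in terms of $\p_x^2 U(0,t)$ and $\p_t\p_x^2 U(0,t)$ via the PDE and the boundary condition. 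Without this mechanism the continuation argument does not close. Finally, the bound you state, $|\X(t)|\le C\varepsilon_0$, is not the right thing; what one proves is $st+\X(t)\le\X_0$ and $|\X(t)-\X_0|\le C$, with $|\X_0|$ \emph{large} (so $|\X_0|^{-1}<\varepsilon_0$): the whole scheme hinges on the shock starting far enough from the wall that the initial boundary mismatch and the interaction terms are exponentially small.
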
 

We note that in the viscous result $f$ is not necessarily convex. From \eqref{RH} and \eqref{E} we have $f'(\ubr)\leq s\leq f'(\ubl)$,  and non-degeneracy of the shock just requires that $s\neq f'(\ub_\pm)$. With the aid of $s<0$ and \eqref{small-perturbation}, the incoming boundary condition \eqref{bc-incoming} holds.

\begin{remark}
	\eqref{asymptotic behavior} shows that the asymptotic state is the superposition of the shifted background viscous shock $\phi^S_{\X_{\infty}}$ and the time-periodic boundary wave $\ur$. 
	Such a boundary wave keeps oscillating as time goes to infinity.
\end{remark}

\begin{remark}
	Note that the weight of boundary wave $\ur-\ubr$ in $\usharp$ \eqref{def-ansatz-shock}, i.e. $\sigma$, tends to $0$ as $x\rightarrow -\infty$. 
	It implies that our ansatz $\usharp$, rather than the linear superposition $\ur - \ubr + \phi^S_{\X}$, captures the transitions of the boundary wave, and it aligns with the inviscid case observation that the boundary wave does not appear on the left region of the shock curve. 
	
	As time approaches, $\usharp$ converges to the linear superposition $\ur - \ubr + \phi^S_{\X}$, since the nonlinear interactions of the viscous shock and the boundary wave would weaken progressively as the viscous shock propagates away from the boundary; see \eqref{ineq-two ansatz} below.  
\end{remark}

	

\section{Preliminaries and time-periodic solutions}

Before proving the theorem, in this section we present some preliminary lemmas and complete the proof of Lemmas \ref{lem-per-t0} and \ref{lem-per-t1}.

\subsection{Generalized characteristics}

It is well-known in the theory of generalized characteristics that for one-dimensional scalar convex conservation laws, through every point $(x,t)\in \R \times[0,+\infty)$ pass two forward (backward) extreme generalized characteristics, which may coincide at some time. Moreover, when $t>0,$ the forward generalized characteristic is \emph{unique} and the two backward extreme generalized characteristics are both straight lines. We refer to \cite[Chapters 10 \& 11]{Dafer.2016} for details, or \cite[Definition 3.1 \& Lemma 3.2]{Xin.Y.Y2019} for a short summary. 

Thanks to nice properties of generalized characteristics of convex scalar conservation laws, the lemmas as follows can be derived.

\begin{lemma}[Space-periodic solutions]\cite[Theorem~5.2]{Glimm.L1970}
	\label{lem-per-x}
	Assume that the initial data $ u_0 \in L^\infty $ is periodic in $x\in\R$ with period $ p>0 $ and average $ \ub := \frac{1}{p} \int_{0}^{p} u_0(x)dx. $ Then the entropy solution $ u(x,t) $ to the Cauchy problem \eqref{eqn-0} and \eqref{ic0}, for $\mu=0$, $x\in \R, t\geq 0$, is also periodic with period $ p>0 $ and average $ \ub $ for each $ t\geq 0, $ and satisfies that
	\begin{equation}
		\| u(\cdot,t) - \ub \|_{L^\infty(\R)} \leq \frac{C}{t}, \quad t>0,
	\end{equation}
	where $ C>0 $ is a constant, independent of time $ t. $
\end{lemma}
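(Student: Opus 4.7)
The plan is to combine three classical ingredients for convex scalar conservation laws: uniqueness of Kruzhkov entropy solutions, conservation of the spatial mean, and Oleinik's one-sided Lipschitz estimate. First I would show that the entropy solution $u(x,t)$ remains $p$-periodic in $x$. Since the flux is autonomous in $x$ and the datum satisfies $u_0(x+p)=u_0(x)$, both $u(x,t)$ and $u(x+p,t)$ are Kruzhkov entropy solutions to the same Cauchy problem, so uniqueness forces $u(\cdot+p,t)=u(\cdot,t)$ for every $t\geq 0$. Next, integrating $\p_t u+\p_x f(u)=0$ over one period and using periodicity of $f(u(\cdot,t))$ (so that the flux values at $x=0$ and $x=p$ cancel), one gets
\begin{equation*}
\frac{d}{dt}\int_0^p u(x,t)\,dx=0,
\end{equation*}
and the average therefore remains equal to $\ub$ for all $t\geq 0$.

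The heart of the argument is the $L^\infty$ decay at rate $C/t$, which I would derive from Oleinik's inequality: for convex $f$ and bounded entropy solutions there is a constant $C>0$ (depending on a lower bound of $f''$ on the range of $u$) such that, for a.e.\ $y<z$ and every $t>0$,
\begin{equation*}
u(z,t)-u(y,t)\leq \frac{C(z-y)}{t}.
\end{equation*}
Invoking $p$-periodicity, I would pick $y$ near an essential infimum and $z$ near an essential supremum of $u(\cdot,t)$ within one period, shifting one of them by $p$ if necessary so that $y<z$ and $z-y\leq p$. This yields the oscillation bound $\operatorname{ess\,sup}_x u(\cdot,t)-\operatorname{ess\,inf}_x u(\cdot,t)\leq Cp/t$. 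Since the mean equals $\ub$, one has $\operatorname{ess\,inf}_x u(\cdot,t)\leq \ub\leq \operatorname{ess\,sup}_x u(\cdot,t)$, and hence $\|u(\cdot,t)-\ub\|_{L^\infty(\R)}\leq Cp/t$, which is the claimed decay.

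The main obstacle is a rigorous justification of Oleinik's estimate in the merely $L^\infty$ periodic setting, together with the selection of near-extremal points for a function that is only of locally bounded variation once $t>0$. The cleanest route is the generalized characteristics framework recalled earlier in this section: for $t>0$, through every point pass two backward extreme generalized characteristics, both of which are straight lines with slope $f'(u(\cdot,t))$ at their upper endpoint. Comparing the slopes of the characteristics emanating backward from $(y,t)$ and $(z,t)$ gives $f'(u(z,t))-f'(u(y,t))\leq (z-y)/t$, and the strict convexity of $f$ converts this into the one-sided bound for $u$ stated above. The rate $C/t$ is known to be sharp, reflecting the $N$-wave spreading of solutions to convex scalar conservation laws.
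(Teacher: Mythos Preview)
Your argument is correct and is essentially the classical route: periodicity by Kruzhkov uniqueness, conservation of the mean by integrating over a period, and the $C/t$ oscillation decay from Oleinik's one-sided inequality (or equivalently from the non-crossing of backward extreme characteristics). The selection of near-extremal points is unproblematic once $t>0$ since the solution is then BV on each period, and your shift-by-$p$ trick to arrange $0<z-y\leq p$ is standard.

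Note, however, that the paper does not give its own proof of this lemma: it simply quotes it as Theorem~5.2 of Glimm--Lax \cite{Glimm.L1970}, listing it among the preliminary facts that ``can be derived'' from the theory of generalized characteristics. So there is no paper-specific argument to compare against; your proposal supplies exactly the kind of proof the citation points to.
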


\begin{lemma}\cite[Proposition 3.2]{Dafer.1995}\label{lem-div}
	Assume that the initial data $ u_0 \in L^\infty(\R) $ in \eqref{ic0}. Then the entropy solution $ u(x,t) $ to the Cauchy problem \eqref{eqn-0} and \eqref{ic0} for $\mu=0$ takes a constant value $ \ub $ along the straight line $ x = f'(\ub) t + x_0 $ for some $ x_0\in\R, $ if and only if 
	\begin{equation}
		\int_{x_0}^x (u_0(y)-\ub) dy \geq 0 \quad \forall x\in\R.
	\end{equation}
\end{lemma}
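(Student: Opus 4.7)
The plan is to invoke the Lax--Oleinik variational representation of the entropy solution for the strictly convex flux $f$: with $g := f^*$ the Legendre dual of $f$ and $U_0(y) := \int_0^y u_0(z)\,dz$, the unique entropy solution of \eqref{eqn-0}--\eqref{ic0} (with $\mu=0$) is
\begin{equation*}
u(x,t) \;=\; (f')^{-1}\!\bigl((x - y^*(x,t))/t\bigr),
\end{equation*}
where $y^*(x,t)$ is any minimizer of $y \mapsto J(y;x,t) := U_0(y) + t\, g\bigl((x-y)/t\bigr)$ over $\R$. The only facts about $g$ I need are its convexity and the duality identity $g'(f'(v)) = v$. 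Together they yield the Bregman inequality
\begin{equation*}
t\bigl[g\bigl(f'(\ub) + (x_0-y)/t\bigr) - g(f'(\ub))\bigr] \;\geq\; \ub(x_0 - y)\qquad \forall\,y\in\R,\ t>0.
\end{equation*}

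For sufficiency, fix the line $x(t) := f'(\ub)\,t + x_0$ and assume the integral inequality. Combining the Bregman bound with $U_0(y) - U_0(x_0) = \int_{x_0}^{y} u_0(z)\,dz$ gives, for every $y \in \R$ and $t>0$,
\begin{equation*}
J(y;x(t),t) - J(x_0;x(t),t) \;\geq\; \int_{x_0}^{y}\bigl(u_0(z) - \ub\bigr)\,dz \;\geq\; 0,
\end{equation*}
so $x_0$ is a minimizer of $J(\cdot;x(t),t)$; the Lax--Oleinik formula then returns $u(x(t),t) = (f')^{-1}(f'(\ub)) = \ub$.

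Conversely, if $u(x(t),t) = \ub$ for all $t>0$, then at every such point there is a minimizer $y^*$ with $(x(t)-y^*)/t = f'(\ub)$, forcing $y^* = x_0$. Reversing the computation above, the minimality $J(y;x(t),t) \geq J(x_0;x(t),t)$ subtracted by the tangent term $\ub(x_0-y)$ gives, for all $y\in\R$ and all $t>0$,
\begin{equation*}
\int_{x_0}^{y}\bigl(u_0(z)-\ub\bigr)\,dz \;\geq\; -R(y,t),\qquad R(y,t) := t\bigl[g\bigl(f'(\ub)+(x_0-y)/t\bigr) - g(f'(\ub)) - \ub(x_0-y)/t\bigr] \geq 0.
\end{equation*}
Since $g$ is $C^1$ at $f'(\ub)$ (by strict convexity of $f$), $R(y,t) \to 0$ as $t \to \infty$ for each fixed $y$, and the desired inequality follows. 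The principal obstacle will be verifying that $x_0$ is genuinely a minimizer \emph{along the whole ray}, not merely for a.e.\ $t>0$: this is settled by the continuity and a.e.-uniqueness of the backward generalized characteristic through $(x(t),t)$, in the spirit of \cite[Chapter~11]{Dafer.2016}. A cleaner alternative is the direct generalized-forward-characteristic construction of Dafermos, which emits the ray of slope $f'(\ub)$ from $(x_0,0)$ and verifies its entropy admissibility using precisely the stated cumulative-mass condition, thereby identifying it as a genuine characteristic carrying the value $\ub$.
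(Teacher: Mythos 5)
The paper does not prove \cref{lem-div}; it imports it verbatim from Dafermos~\cite[Proposition~3.2]{Dafer.1995}, whose own argument is phrased in the language of generalized characteristics rather than the variational formula. Your route via the Lax--Oleinik representation and the Bregman inequality for $g=f^*$ is therefore a genuinely different (and somewhat more self-contained) approach, and the core algebra is correct: the identity $J(y;x(t),t)-J(x_0;x(t),t)=\int_{x_0}^{y}(u_0-\ub)\,dz + R(y,t)$ with $R\ge 0$ does exactly the work you want in both directions, and the limit $R(y,t)\to 0$ as $t\to\infty$ (for fixed $y$, using $g\in C^1$ at $f'(\ub)$) cleanly extracts the integral inequality from minimality.

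Two technical points should be tightened. First, in the sufficiency direction you only conclude that $x_0$ \emph{is a} minimizer, which by itself only pins down one of the one-sided limits of $u$ at $(x(t),t)$; to get that the solution actually carries the single value $\ub$ along the ray (i.e.\ that it is a divide in Dafermos's sense), you should note that strict convexity of $f$ makes $g$ strictly convex, so your Bregman inequality is strict for $y\neq x_0$, hence $x_0$ is the \emph{unique} minimizer and the Lax--Oleinik value is unambiguous. Second, as you yourself flag, the Lax--Oleinik formula holds pointwise for a.e.\ $(x,t)$ and with one-sided limits via the extreme minimizers; passing from ``for a.e.\ $t$'' to ``for every $t>0$'' in the converse direction needs either the monotonicity and one-sided continuity of the extreme minimizer functions $y\mapsto y^\pm(x,t)$, or, more directly, the generalized-characteristic machinery of \cite[Ch.~10--11]{Dafer.2016} that you mention. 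Neither point is a serious obstruction; once you insert the strictness of the Bregman inequality and invoke the standard regularity of the minimizer maps, the argument is complete and equivalent in content to Dafermos's characteristic-based proof.
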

It is noted that such a straight line is a classical characteristic and is called a \emph{divide} by C. Dafermos in \cite{Dafer.1995} (also see \cite[Definition 10.3.3]{Dafer.2016}). It is obvious that there exists at least a divide issuing from any period for any space-periodic solution.

\begin{lemma}\cite[Theorem 11.5.1]{Dafer.2016}\label{lem-lr-decay}
	Let $u$ be the admissible solution to \eqref{eqn-0} for $\mu=0$ with initial data $u_0$ such that 
	\begin{equation}
		\int_{x}^{x+l}u_0(y)dy =\mathcal{O}(l^{r}), \qquad\text{as~}l\rightarrow\infty,
	\end{equation}
	for some $r\in[0,1)$, uniformly in $x$ on $\R$. Then 
	\begin{equation}
		u(x\pm,t)=\mathcal{O}(t^{-\frac{1-r}{2-r}}), \qquad\text{as~}t\rightarrow\infty,
	\end{equation}
	uniformly in $x$ on $\R$.
\end{lemma}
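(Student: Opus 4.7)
The plan is to prove the decay via the Lax--Oleinik (Hopf--Lax) variational representation for the entropy solution, combined with a sharp balancing of scales against the sublinear growth hypothesis. After replacing $f$ by $f-f(0)-f'(0)u$ (which leaves \eqref{eqn-0} invariant and preserves the hypothesis), I may assume $f(0)=f'(0)=0$; the assumption $\int_{x}^{x+l}u_{0}=\mathcal{O}(l^{r})$ with $r<1$ forces the effective mean of $u_{0}$ to vanish, so the target decay is to the constant $0$.

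Fix $(x,t)$ with $t>0$ large and set $v:=u(x+,t)$. By the Hopf--Lax formula, the primitive $\phi(x,t)$ of $u(\cdot,t)$ satisfies
\[
\phi(x,t)=\min_{y\in\R}\bigl\{\phi_{0}(y)+tL\bigl((x-y)/t\bigr)\bigr\},\qquad\phi_{0}(y):=\int_{0}^{y}u_{0},
\]
with $L$ the Legendre dual of $f$; its minimizer $y^{\ast}$ obeys $x-y^{\ast}=f'(v)\,t$, so $|x-y^{\ast}|\simeq|v|\,t$ by uniform convexity of $f$ near $0$. Writing out the minimality at the comparison point $y^{\ast}+\ell$ and Taylor expanding $L$ to second order at $p^{\ast}=(x-y^{\ast})/t$ yields the quantitative inequality
\[
\int_{y^{\ast}}^{y^{\ast}+\ell} u_{0}(z)\,dz\;\geq\;v\,\ell\;-\;\frac{C\ell^{2}}{t}\qquad\text{for every }\ell\in\R,
\]
which is a finite-$t$ sharpening of the divide criterion in \cref{lem-div}.

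Feeding the hypothesis $\bigl|\int_{y^{\ast}}^{y^{\ast}+\ell}u_{0}\bigr|\leq C(1+|\ell|^{r})$ into this inequality: for $\ell\geq 1$ one obtains $v\leq C(\ell^{r-1}+\ell/t)$, and minimizing the right-hand side selects $\ell\simeq t^{1/(2-r)}$, producing $v\leq Ct^{-(1-r)/(2-r)}$. Running the same argument with $\ell<0$ (via the lower bound of the hypothesis) yields the matching lower bound $v\geq -Ct^{-(1-r)/(2-r)}$. Uniformity in the base point is inherited from the uniformity in $x$ of the hypothesis, and $u(x-,t)$ is handled by the symmetric one-sided limit of the minimizer (the two limits coincide off an at-most-countable set).

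The main obstacle is the Taylor-remainder step: the quadratic error $C\ell^{2}/t$ requires a uniform upper bound on $L''=1/f''$ along the range of values traversed by $v$ as $(x,t)$ varies, and the chosen $\ell\simeq t^{1/(2-r)}$ translates into $v\simeq t^{-(1-r)/(2-r)}$, which must stay in a compact neighborhood of $0$ where $f''$ is bounded below. This self-consistency loop is closed by a bootstrap starting from the maximum-principle bound $|v|\leq\|u_{0}\|_{L^{\infty}}$ and then iterating to obtain the sharp rate. A secondary technicality is the at-most-countable set of points where $y^{\ast}$ is not single-valued; there one passes to the two one-sided limits, which correspond exactly to the two backward extreme characteristics recalled in the preliminaries.
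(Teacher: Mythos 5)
The paper does not supply its own proof of this lemma; it simply cites Dafermos's Theorem 11.5.1, whose proof runs through generalized backward characteristics. Your argument via the Hopf--Lax (Lax--Oleinik) variational representation is a correct and essentially equivalent route: the quantitative inequality $\int_{y^\ast}^{y^\ast+\ell}u_0 \geq v\ell - C\ell^2/t$ you derive by comparing the minimizer $y^\ast$ against $y^\ast+\ell$ and Taylor-expanding $L$ is exactly the integrated form of the backward-characteristic comparison Dafermos uses, and as you note it is a finite-$t$ sharpening of the divide criterion (\cref{lem-div}). The optimization $\ell\sim t^{1/(2-r)}$ then gives the rate $t^{-(1-r)/(2-r)}$, and treating $\ell<0$ gives the matching lower bound. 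The normalization $f(0)=f'(0)=0$ and the observation that the sublinear-growth hypothesis forces the target constant to be $0$ are both fine.

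Two small technical points. First, the inequality cannot literally hold ``for every $\ell\in\R$'' with a uniform constant $C$, since $L''$ need only be bounded on compact subintervals of the range of $f'$; you should restrict to, say, $|\ell|\leq \delta t$ with $\delta$ small so that $p^\ast-\ell/t$ stays in a fixed compact interval. This is harmless because the optimal $\ell\sim t^{1/(2-r)}$ satisfies $\ell/t\to 0$. Second, the bootstrap you invoke to control $L''$ along the traversed range is unnecessary: the maximum principle already gives $|v|\leq\|u_0\|_{L^\infty}$ for all $(x,t)$, so $p^\ast=f'(v)$ lies in a fixed compact interval $[a,b]$, and for $t$ large $p^\ast-\ell/t$ lies in a slightly larger compact interval where $L''$ is bounded since $f\in C^2$ is strictly convex. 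No iteration is needed to close the loop.
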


\subsection{Inviscid time-periodic solutions} \label{sect-time-peri0} 

To study the inviscid time-periodic solution, observe that the BVP \eqref{eqn-0} and \eqref{bc0} with incoming boundary \eqref{bc-incoming} is equivalent to a Cauchy problem. And then the proof of Lemma \ref{lem-per-t0} follows.

\begin{lemma}\label{lem-interchange x&t}
	Assume that incoming boundary condition \eqref{bc-incoming} holds. Then $u$ is the entropy solution to boundary value problem \eqref{eqn-0} and \eqref{bc0} (extended to $t\in \R$) if and only if $v(\tilde{x},\tilde{t}):=-f(u(-\tilde{t},\tilde{x}))$ is the entropy solution to
	\begin{equation}
		\label{eqn-inco}
		\begin{aligned}
			\p_{\tilde{t}} v(\tilde{x},\tilde{t}) +\p_{\tilde{x}} g(v(\tilde{x},\tilde{t})) &=0
			\quad \tilde{x} \in \R, \tilde{t}>0,  \\
			v(\tilde{x},0) & = v_0(\tilde{x}), \quad \tilde{x}\in \R ,
		\end{aligned}
	\end{equation}
	where $v_0(\tilde{x}):=-f(u_b(\tilde{x}))$, $g(v):=(f^{-1})(-v)$ for $v\in [\min v_0, \max v_0]$.  
\end{lemma}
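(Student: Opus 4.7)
The plan is to implement the linear change of variables $(\tilde{x},\tilde{t})=(t,-x)$, which maps the half-plane $\{x\in\R_-,\,t\in\R\}$ bijectively onto the upper half-plane $\{\tilde{x}\in\R,\,\tilde{t}>0\}$ and sends the boundary line $\{x=0\}$ to the initial line $\{\tilde{t}=0\}$, combined with the nonlinear substitution $v=-f(u)$. The incoming condition \eqref{bc-incoming} gives $f'(u_b)<-\delta_b<0$, so by strict convexity of $f$ the map $f$ is strictly decreasing on a neighborhood of the range of $u_b$; consequently $g(\cdot):=f^{-1}(-\,\cdot)$ is well-defined and $C^2$ on $[\min v_0,\max v_0]$. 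A direct computation yields
\begin{equation*}
g'(v)=-\frac{1}{f'(u)}>0,\qquad g''(v)=-\frac{f''(u)}{(f'(u))^3}>0,
\end{equation*}
so $g$ is strictly convex, placing the transformed problem \eqref{eqn-inco} squarely within the standard Kruzkov framework for convex scalar conservation laws.

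For classical solutions the equivalence follows from the chain rule alone. If $u$ solves $\partial_t u+\partial_x f(u)=0$, then writing $u=g(v)$ and differentiating $v(\tilde{x},\tilde{t})=-f(u(-\tilde{t},\tilde{x}))$ gives $\partial_{\tilde{t}}v=f'(u)\,\partial_x u$, while $\partial_{\tilde{x}}g(v)=\partial_{\tilde{x}}u=\partial_t u$, whence
\begin{equation*}
\partial_{\tilde{t}}v+\partial_{\tilde{x}}g(v)=\partial_t u+\partial_x f(u)=0,
\qquad v(\tilde{x},0)=-f(u_b(\tilde{x}))=v_0(\tilde{x}).
\end{equation*}
Since the substitution is an involution of the same algebraic form, the reverse implication is obtained by the identical calculation.

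To promote this to weak/entropy solutions, I would test the Kruzkov entropy inequality for $u$ against an arbitrary compactly supported $\varphi(\tilde{x},\tilde{t})$ pulled back to $(x,t)$; the unit Jacobian together with the identity $g(v)=u$ then converts the Kruzkov inequality for $u$ into that for $v$. Equivalently, one may verify the correspondence at shocks: a shock curve $x=X(t)$ with speed $s$ and one-sided states $u_\pm$ obeying the Lax condition $f'(u_-)>s>f'(u_+)$ is mapped to the shock $\tilde{x}=X^{-1}(-\tilde{t})$ with speed $\tilde{s}=-1/s$ and states $v_\pm=-f(u_\pm)$, and the transformed Lax inequality $g'(v_-)>\tilde{s}>g'(v_+)$ reduces to the original one via $g'(v_\pm)=-1/f'(u_\pm)$ combined with the sign information $s,f'(u_\pm)<0$. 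Rankine--Hugoniot is preserved as an algebraic consequence of $g(v)=u$ and the identity $\tilde{s}=-1/s$.

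The main obstacle is handling the boundary trace rigorously: a general scalar BVP requires BLN-type conditions, which would complicate the correspondence with the initial trace in \eqref{eqn-inco}. Here the incoming assumption \eqref{bc-incoming} rescues the argument, since every generalized characteristic meeting $\{x=0\}$ must emanate from the boundary and propagate into $\R_-$, forcing the trace $u(0,\cdot)=u_b(\cdot)$ to be attained in the strong (pointwise a.e.) sense. This strong attainment passes through the change of variables unchanged, yielding the strong initial trace $v(\tilde{x},0)=v_0(\tilde{x})$ for the transformed Cauchy problem, and the equivalence is complete.
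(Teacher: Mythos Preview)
Your proposal is correct and follows essentially the same route the paper indicates: the paper omits the proof, noting only that one verifies the equation of $v$, the Rankine--Hugoniot condition, and the Lax entropy condition under the change of variables (referring to \cite{Li.Y2017,Yuan.2019} for details), and that $g$ is well-defined and strictly convex thanks to \eqref{bc-incoming}. You carry out exactly these verifications, with the welcome addition of explaining why the incoming condition forces the boundary trace to be attained strongly, so that no BLN-type complication arises.
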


We note that thanks to \eqref{bc-incoming} and the convexity of $f$, $g$ is well-defined and strictly convex. Moreover, $v_0\in L^{\infty}(\R)$ and
\begin{equation}
	\label{bc-incoming2}
	g'(v_0)= -(f'(u_b))^{-1}<\delta_b^{-1}. 
\end{equation}
It is straightforward to verify the equation of $v$, Rankine-Hugoniot condition and Lax entropy condition, and thus the proof of this lemma is omitted. We refer the interested readers to \cite[p.14]{Li.Y2017} and \cite{Yuan.2019} for more details on the method of interchanging the role of $x$ and $t$. 

\begin{proof}[Proof of Lemma \ref{lem-per-t0}]
	After interchanging the role of $x$ and $t$ as Lemma \ref{lem-interchange x&t}, \eqref{ineq-per-t0-1} can be obtained directly by Lemma \ref{lem-per-x}. The existence of divides follows from Lemma \ref{lem-div}, and in the equivalent Cauchy problem the initial point of the divides, $\tilde{x}_0$, is chosen such that 
	$$
	\int^{\tilde{x}_0}_0 [v_0(y)-\bar{v}] dy=\min\limits_{\tilde{x}\in[0,p]} \int_0^{\tilde{x}_0} [ v_0(y)-\bar{v}] dy,
	$$
	where $\bar{v}=p^{-1}\int_{0}^{p} v_0(y) dy$. It is equivalent to \eqref{ineq-per-t0-2}.
	Therefore, Lemma \ref{lem-per-t0} is proved.
\end{proof}

\subsection{Viscous time-periodic solutions}
\label{sect-time-peri1} 
Now we prove the existence and decay estimates of viscous time-periodic solutions stated in Lemma \ref{lem-per-t1}. 
\begin{proof}[Proof of Lemma \ref{lem-per-t1}]
	\underline{Step 1. Fourier transform and the linearized equation}.
	Let $v:=u-\ub_b,~  v_b(t)=u_b(t)-\ub_b,~ f_1(v):=f'(\ub_b+v)-f'(\ub_b).$ 
	$v$ satisfies 
	\begin{equation}
		\label{eqn-per-time}
		\begin{aligned}
			\p_x^2 v -f'(\ub_b) \p_x v -\p_t v &= f_1(v)\p_x v &&\quad \text{for~} x\in \R_-, t>0,\\
			v(0,t)&=v_b(t), 
			&&\quad \text{for~}  t\geq0.
		\end{aligned}
	\end{equation}
	Applying the Fourier transform with respect to $t$ on the period $[0,T]$ on the above equation, it yields that
	\begin{equation}
		\label{eqn-per-time-Fourier}
		\begin{aligned}
			\p_x^2 \hat{v}_k -f'(\ub_b) \p_x \hat{v}_k -i\frac{2k\pi}{T} \hat{v}_k &= (\widehat{f_1(v)} * \p_x\hat{v})_k \quad \text{for~} x\in \R_-,\\
			\hat{v}_k\Big|_{x=0}&=\hat{v_b}_{,k}
			\,.
		\end{aligned}
	\end{equation}
	Here ``$*$'' denotes the convolution $(\hat{v} * \hat{w})_k = \sum_{k_1+k_2=k} \hat{v}_{k_1} \hat{w}_{k_2}$.
	
	Since $f'(\ub_b)<0$, the linear differential operator of \eqref{eqn-per-time-Fourier} has two different eigenvalues
	\begin{equation}
		\label{def-eigenvalue}
		r_\pm(k) =\frac{1}{2} \left( r_1 \pm \frac{1}{\sqrt{2}} \left( \sqrt{\sqrt{r_1^4+16r_2^2 } +r_1^2 } + i \sgnk \sqrt{\sqrt{r_1^4+16r_2^2 } -r_1^2 } \right) \right), 
	\end{equation}
	where $r_1=f'(\ub_b)<0, ~r_2=\frac{2k\pi}{T}$.
	$\pm \text{Re}\, r_\pm(k) >0$ for $k\neq0$, and $r_+(0)=0, r_-(0)=r_1<0$. For simplicity, we omit $k$ in $r_\pm(k)$ when there are not confusions.
	
	Therefore, it is easy to verify that the solution $\hat{v}_k(x)$ to \eqref{eqn-per-time-Fourier} for $k\neq0$, which decays to $0$ as $x\rightarrow -\infty$, has the form 
	\begin{align}
		\hat{v}_k(x)=& e^{r_+ x} g_k - \int_{x}^{0} \frac{e^{r_+ (x-y)} }{r_+ - r_-} (\widehat{f_1(v)} * \p_x\hat{v})_k (y) dy \notag\\
		&\qquad +  \int_{-\infty}^{x} \frac{e^{r_- (x-y)} }{r_- - r_+} (\widehat{f_1(v)} * \p_x\hat{v})_k (y) dy, \label{eqn-per-time-Fourier1}\\
		\text{where~} g_k:=&\hat{v_b}_{,k} - \int_{-\infty}^{0} \frac{e^{-r_- y} }{r_- - r_+} (\widehat{f_1(v)} * \p_x\hat{v})_k (y) dy. \notag
	\end{align}
	The solution $\hat{v}_0(x)$ to \eqref{eqn-per-time-Fourier}, of which the derivatives decay to $0$ as $x\rightarrow -\infty$, satisfies   
	\begin{align}
		\hat{v}_0(x)=& \hat{v}_{0,\infty}  + \int_{-\infty}^{x}  r_1^{-1}( e^{r_1 (x-y)} -1) (\widehat{f_1(v)} * \p_x\hat{v})_0 (y) dy, \label{eqn-per-time-Fourier2}\\ 
		\text{where}~\hat{v}_{0,\infty}:=& -\int_{-\infty}^{0}  r_1^{-1}( e^{-r_1 y }-1 ) (\widehat{f_1(v)} * \p_x\hat{v})_0 (y) dy. \notag
	\end{align}  
	Here $\hat{v_b}_{,0}=0$ due to the definition of $\ub_b$ and $v_b$.
	Note that when $\lim\limits_{x\rightarrow-\infty}(\widehat{f_1(v)} * \p_x\hat{v})_k(x)=0$, 
	the two integrals of $\hat{v}_k$ ($k\neq0$) in \eqref{eqn-per-time-Fourier1} and the integral of $\hat{v}_0$ in \eqref{eqn-per-time-Fourier2} tend to $0$ as $x\rightarrow -\infty$, that is, 
	\begin{align*}
		&\lim\limits_{x\rightarrow-\infty}\hat{v}_k (x)=0 \text{~for~} k\neq 0 , \andd
		\lim\limits_{x\rightarrow-\infty}\hat{v}_0 (x)=
		\hat{v}_{0,\infty} . 
	\end{align*}
	Therefore, to solve the boundary value problem \eqref{eqn-0} and \eqref{bc0}, it equivalents to solve \eqref{eqn-per-time-Fourier1} and \eqref{eqn-per-time-Fourier2}.
	
	\noindent\underline{Step 2. Contraction mapping}.
	Denote $\theta_b=\frac{1}{2}\mathrm{Re}\,r_+(1)>0$, and hence
	\begin{equation}
		\label{ineq-eigenvalue}
		\abs{\mathrm{Re}\, r_\pm(k) -\theta_b} \geq \theta_b>0 ~\text{for } k\neq 0, ~\text{and }|r_1|>\theta_b.
	\end{equation}
	For any fixed $\gamma \in (0, \theta_b]$, define the mapping $\Xi$ as follows:
	for any $v\in \mathcal{T}^m_{\gamma,M}\cap \{v(0,t)=v_b(t)\}$, $w=\Xi(v)$ satisfies that
	\begin{align*}
		\hat{w}_k(x)&= e^{r_+ x} g_k - \int_{x}^{0} \frac{e^{r_+ (x-y)} }{r_+ - r_-} \Big(\widehat{f_1(v)} * \p_x\hat{v} \Big)_k (y) dy \\
		&\qquad  +  \int_{-\infty}^{x} \frac{e^{r_- (x-y)} }{r_- - r_+} \Big(\widehat{f_1(v)} * \p_x\hat{v} \Big)_k (y) dy,\\
		\text{where~}
		g_k	& =\hat{v_b}_{,k} - \int_{-\infty}^{0} \frac{e^{-r_- y} }{r_- - r_+} \Big(\widehat{f_1(v)} * \p_x\hat{v} \Big)_k (y) dy, \quad \text{for~}k\neq0, \\
		\hat{w}_0(x)&= \hat{w}_{0,\infty}  + \int_{-\infty}^{x}  r_1^{-1}( e^{r_1 (x-y)} -1) (\widehat{f_1(v)} * \p_x\hat{v})_0 (y) dy, \\ 
		\text{where~}
		\hat{w}_{0,\infty}&= -\int_{-\infty}^{0}  r_1^{-1}( e^{-r_1 y }-1 ) (\widehat{f_1(v)} * \p_x\hat{v})_0 (y) dy. 
	\end{align*}
	In virtue of $v\in \mathcal{T}^m_{\gamma,M}$, the far field value $\hat{v}_{0,\infty}$ of $v$ satisfies 
	\begin{align*}
		\abs{\hat{v}_{0,\infty}}&\leq |\hat{v_b}_{,0}|+ \int_{-\infty}^{0} \abs{\p_x \hat{v}_0 (y)} \, dy \\
		&\leq \int_{-\infty}^{0} e^{\gamma y}  \, dy \cdot  \sup_{y\in\R_-} e^{-\gamma y}\norm{\p_x \hat{v}_0 (y)}_{L^2_{\mathrm{per}}} 
		 \leq CM,
	\end{align*}
	where we omit $(0,T)$ in $H^m_{\mathrm{per}}(0,T)$ for short when there are no confusions.
	Hence, the nonlinear term $f_1(v) \p_x v$ can be controlled as 
	\begin{align*}
		&\norm{ f_1(v) \p_x v }_{H^{m-1}_{\mathrm{per}}} 
		\leq C   \left(\norm{ f_1(v) }_{L^\infty_{\mathrm{per} } } \norm{\p_x v}_{H^{m-1}_{\mathrm{per}}} + \norm{ f_1(v) }_{W^{m-1,\infty}_{\mathrm{per} } } \norm{\p_x v}_{L^2_{\mathrm{per} } }  \right)\\
		&\qquad
		\leq C  (\norm{v-\hat{v}_{0,\infty} }_{H^1_{\mathrm{per}} }+\abs{\hat{v}_{0,\infty}})  \norm{\p_x v}_{H^{m-1}_{\mathrm{per} } } 
		+C(\norm{v-\hat{v}_{0,\infty} }_{H^{m}_{\mathrm{per}} }+\abs{\hat{v}_{0,\infty}})  \norm{\p_x v}_{L^2_{\mathrm{per} } }  \\
		& \qquad 
		\leq  C  e^{\gamma x} M^2,
	\end{align*}
	where $|f_1(v)|\leq \max_w|f''(w)|\,|v|\leq C|v|$, $C$ is the generic constant independent of $M$.
	Moreover, it follows from \eqref{def-eigenvalue} and \eqref{ineq-eigenvalue} that $\text{for } k\neq0,$
	\begin{align*}
		&\qquad\abs{\mathrm{Re}\, r_\pm(k)}\leq C |k|^{1/2}, \quad \text{~and~} \\	&\abs{\int_{x}^{0} e^{\mathrm{Re}\, r_+ (x-y)} e^{\gamma y} dy } 
		+ \abs{\int_{-\infty}^{x} e^{\mathrm{Re}\, r_- (x-y)} e^{\gamma y} dy } 
		\leq \frac{Ce^{\gamma x}}{\abs{\mathrm{Re}\, r_\pm -\theta_b} } \leq  \frac{Ce^{\gamma x}}{|k|^{1/2}}, \\
		& \abs{\int_{-\infty}^{x} (e^{r_1 (x-y)}-1) e^{\gamma y} dy } \leq Ce^{\gamma x}.
	\end{align*} 
	Then for $m\in \mathbb{N}_+$, it holds that
	\begin{equation}
		\label{ineq-per-t1m0}
		\begin{aligned}
			&
			e^{ -\gamma x}\sum_{0\leq i+\frac{j}{2}\leq m}\norm{\p_t^i\p_x^j (w- \hat{w}_{0,\infty}) }_{H^{m-i-\frac{j}{2}}_{\mathrm{per}}}  
			+\abs{\hat{w}_{0,\infty}}
			\\		&\qquad\qquad\qquad
			\leq C  
			\sup_{x\in\R_-} \{e^{ -\gamma x} 
			\norm{ f_1(v) \p_x v }_{H^{m-1}_{\mathrm{per}}} \} \leq CM^2.
		\end{aligned}
	\end{equation}
	Therefore,  $\Xi: \mathcal{T}^m_{\gamma,M}\cap \{v(0,t)=v_b(t)\} \rightarrow \mathcal{T}^m_{\gamma,M}\cap \{v(0,t)=v_b(t)\}$ is well-defined provided that $M$ is small enough. 
	
	
	For any $v, \vt \in \mathcal{T}^1_{\gamma,M}\cap \{v(0,t)=v_b(t)\}$, let $w=\Xi(v),\wt=\Xi(\vt)$. It yields that   
	\begin{align}
		&\sup_{x\in\R_-} \{e^{-\gamma x}\sum_{0\leq i+\frac{j}{2}\leq 1} \norm{\p_t^i\p_x^j (\hat{w}- \hat{\wt} -\hat{w}_{0,\infty}+\hat{\wt}_{0,\infty} ) }_{H^{1-i-\frac{j}{2}}_{\mathrm{per}}}\} +|\hat{w}_{0,\infty}-\hat{\wt}_{0,\infty}| 
		\notag\\
		\leq&  C  \sup_{x\in\R_-} \{e^{ -\gamma x} \norm{ f_1(v) \p_x v- f_1(\vt) \p_x \vt  }_{L^2_{\mathrm{per}}} \}  \notag\\
		\leq&  C  \sup_{x\in\R_-} \{e^{ -\gamma x} \big(\norm{ f_1(v)-f_1(\vt) }_{L^\infty_{\mathrm{per} } } \norm{\p_x v}_{L^2_{\mathrm{per}}} + \norm{ f_1(\vt) }_{L^{\infty}_{\mathrm{per} } } \norm{\p_x (v-\vt)}_{L^2_{\mathrm{per} } } \big) \}   \notag\\
		\leq&  C  \sup_{x\in\R_-} \{e^{ -\gamma x} \big(
		(  \sum_{0\leq i+\frac{j}{2}\leq 1} \norm{ (\hat{v}- \hat{\vt} -\hat{v}_{0,\infty}+\hat{\vt}_{0,\infty} ) }_{H^{1}_{\mathrm{per}}} +|\hat{v}_{0,\infty}-\hat{\vt}_{0,\infty}| )		
		 e^{\gamma x} M \notag\\
		 &\qquad\qquad+ M \norm{\p_x (v-\vt)}_{L^2_{\mathrm{per} } } \big) \}   \notag\\
		\leq&  C M (  \sup_{x\in\R_-}\{ e^{-\gamma x}\sum_{0\leq i+\frac{j}{2}\leq 1} \norm{\p_t^i\p_x^j (\hat{v}- \hat{\vt} -\hat{v}_{0,\infty}+\hat{\vt}_{0,\infty} ) }_{H^{1-i-\frac{j}{2}}_{\mathrm{per}}} \}+|\hat{v}_{0,\infty}-\hat{\vt}_{0,\infty}| ). \label{ineq-per-t1-3}
	\end{align}
	That is, $\Xi$ is a contraction mapping provided that $M$ is small enough. 

	Consequently, by setting $m=1$, $\gamma=\theta_b$ and choosing $M=\nu_b$ small enough in \eqref{ineq-per-t1m0} and \eqref{ineq-per-t1-3}, the existence and uniqueness of the strong solution $\ur$ in $\mathcal{T}^1_{\gamma,\nu_b}$ to \eqref{eqn-0} and \eqref{bc0} and its far field value $\ubr$ can be established by the Banach fixed point theorem. 
	
	The remaining inequality, \eqref{ineq-per-t1}, can be derived
	by setting iterations as of $v^{(N+1)}=\Xi(v^{(N)})$ with $\hat{v}_k^{(1)}(x)= e^{r_+ x} \hat{v_b}_{,k}$ and $\hat{v}_{0,\infty}^{(1)}=0$, 
	and by letting $N\rightarrow\infty$ in similar inequalities as \eqref{ineq-per-t1m0}.
	
\end{proof}

\section{Stability of the inviscid shock}
The proof of \ref{thm-shock-viscous} is based on the analysis of the converging forward characteristics around the perturbed shock curve, and the delicate estimates on the shock curve. These two parts are given in the following subsections, respectively.

\subsection{Behaviors of the forward characteristics and the solution}
We first present an important proposition, Proposition \ref{lem-gluing}, to characterize the behaviors of the entropy solution to IBVP \eqref{eqn-0}, \eqref{bc0}, and \eqref{ic0}.

We denote that
\begin{equation*}
	\label{def-forward-XGamma}
	\begin{aligned}
		\ull :&~\text{the entropy solution to the Cauchy problem \eqref{eqn-0} and \eqref{ic0},}\\
		&~~~ \text{~~for~}x\in \R, t\geq 0, ~\text{where~} u_0 \text{~is extended as constant~}\ubl \text{~for~} x>0,\\
		\ur :&~\text{the time-periodic solution to BVP \eqref{eqn-0} and \eqref{bc0} as above,}\\
		X_{1,2}(t) :&~ \text{the minimal and maximal, respectively, forward characteristic of~} u\\
		&~\text{~issuing from~} (0,0), \\
		\Gamma_1(t):&~ \text{one divide of~} \ull \text{~with value~} \ubl \text{~issuing from~} (x_0,0)~\text{with}~x_0\leq0,\\
		\Gamma_2(t):&~ \text{one divide of~} \ur \text{~with value~}  \ubr \text{~issuing from~} (0,t_b)~\text{with}~t_b\in[0,T),
	\end{aligned}
\end{equation*}
Indeed, since $u_0\in L^1(\R)$, it follows from Lemma \ref{lem-div} that $x_0, t_b$ can be chosen as the ones satisfying 
$$\int_{0}^{x_0} (u_0-\ubl) dy =\min_{x\in \R} \int_{0}^{x} (u_0-\ubl) dy,$$ 
and \eqref{ineq-per-t0-2}, respectively.
In addition, $\Gamma_1'(t)=f'(\ubl)$, $\Gamma_2'(t)=f'(\ubr)$.
Note that \eqref{ic-incoming} and \eqref{bc-incoming} ensure that $X_{1,2}(t)$ will not intersect with the boundary; see \eqref{ineq-X2'} in the proof of \cref{lem-coincide}.

\begin{proposition}\label{lem-gluing}
	Given the curves defined above, for any $ t\geq 0, $ we define two Lipschitz continuous curves:
	\begin{equation}\label{Def-curves}
		X_1^*(t):=\min\{X_1(t), \Gamma_1(t) \} \quad \text{ and } \quad X_2^*(t):= \max\{X_2(t), \Gamma_2(t)\}.
	\end{equation}
    Then the entropy solution $ u(x,t) $ to IBVP \eqref{eqn-0}, \eqref{bc0} and \eqref{ic0} satisfies
	\begin{equation}\label{eq-out}
		u(x,t) = \begin{cases}
		\ull(x,t) \quad \text{ if } x<X_1^*(t), \\
		\ur(x,t) \quad \text{ if } x>X_2^*(t).
		\end{cases}
	\end{equation}
\end{proposition}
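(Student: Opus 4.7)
My plan is to reduce the proposition to two cleaner identifications and then establish each via the theory of generalized characteristics for convex scalar conservation laws. From the definitions \eqref{Def-curves}, $X_1^*(t) \leq X_1(t)$ and $X_2^*(t) \geq X_2(t)$, so $\{x<X_1^*(t)\}\subset\{x<X_1(t)\}$ and $\{x>X_2^*(t)\}\subset\{x>X_2(t)\}$, and it will suffice to prove that $u = \ull$ on $\{x < X_1(t)\}$ and $u = \ur$ on $\{x > X_2(t)\}$.

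To prove the first identity, I will fix $P = (x_P, t_P)$ with $t_P > 0$ and $x_P < X_1(t_P)$. By the theory recalled in Section~3.1, the two extreme backward generalized characteristics of $u$ through $P$ are straight lines on $(0, t_P]$. Since $X_1$ is the minimal forward characteristic of $u$ from $(0,0)$ and forward characteristics of $u$ do not cross one another, these backward lines will remain in $\overline{\{x < X_1(t)\}}$ and therefore terminate at points of $\{t = 0,\ y \leq 0\}$. Hence the one-sided values of $u$ at $P$ depend only on $u_0|_{\R_-}$. Applying the same reasoning to $\ull$ --- whose initial datum agrees with $u_0$ on $\R_-$, and for which any backward characteristic from $P$ terminating at some $y>0$ would, carrying the constant value $\ubl$ with slope $f'(\ubl)$, have to cross $X_1$ and violate the analogous non-crossing property --- I will conclude that the backward characteristics of $\ull$ through $P$ also land on $\{t=0,\ y\leq 0\}$, so that the two values coincide at $P$.

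For the second identity, I will run a symmetric argument. For $P$ with $x_P > X_2(t_P)$, the extreme backward characteristics of $u$ through $P$ cannot cross $X_2$, and therefore reach the lateral boundary $\{x = 0,\ \tau \in (0, t_P)\}$ at some time $\tau$. The incoming condition \eqref{bc-incoming} will guarantee that $u_b$ is attained in the strong sense, so $u(P)$ is determined by $u_b|_{(0, t_P)}$. The same argument, applied to $\ur$ which solves the same BVP with the same boundary datum, will yield $u(P) = \ur(P)$.

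The main obstacle will be the rigorous justification that the backward characteristics of $u$ and of the comparison solution land on the same initial (respectively, boundary) data and propagate the same value across potentially discontinuous parts of the solutions; the non-crossing principle must be applied in both directions and for both $u$ and its comparison. The divides $\Gamma_1$ and $\Gamma_2$, while not strictly necessary for \eqref{eq-out} itself, furnish explicit straight reference lines along which $\ull\equiv\ubl$ and $\ur\equiv\ubr$, and will play an essential role in the subsequent convergence analysis of the perturbed shock curve $X(t)$.
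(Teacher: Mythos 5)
Your reduction to proving $u=\ull$ on $\{x<X_1(t)\}$ and $u=\ur$ on $\{x>X_2(t)\}$ (rather than on the smaller regions cut off by $X_1^*$ and $X_2^*$) is not justified, and the argument you sketch for it does not work. The curves $\Gamma_1$ and $\Gamma_2$ are not merely bookkeeping for the later shift analysis: they are exactly what controls the backward characteristics of $\ull$ and $\ur$. Your claim that a backward characteristic of $\ull$ through $P$ (with $x_P<X_1(t_P)$) terminating at some $y>0$ ``would have to cross $X_1$ and violate the analogous non-crossing property'' is false: $X_1$ is a forward generalized characteristic of $u$, not of $\ull$, so nothing prevents a characteristic of $\ull$ from crossing it. What does prevent $\ull$'s backward characteristic from escaping to $\{t=0,\ y>0\}$ is that it cannot cross the divide $\Gamma_1$ (a characteristic of $\ull$ itself), and this only helps when $x_P<\Gamma_1(t_P)$, i.e.\ when $x_P<X_1^*(t_P)$. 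Symmetrically, it is the divide $\Gamma_2$ of $\ur$ that keeps $\ur$'s backward characteristic from exiting through $\{t=0,\ y<0\}$, where $\ur(\cdot,0)$ and $u_0$ differ, and this again requires $x_P>\Gamma_2(t_P)$, not merely $x_P>X_2(t_P)$.

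Even with the correct landing sets, the final step --- ``so the two values coincide at $P$'' / ``will yield $u(P)=\ur(P)$'' --- is a non-sequitur. Both solutions being ``determined by'' the same initial or boundary data along their own backward characteristics does not imply the point values agree: each characteristic's slope depends implicitly on the (unknown) value it carries, so two distinct values could a priori both be self-consistent. The paper closes this gap with an actual comparison argument: it applies Green's theorem to the equations of $u$ and $\ur$ over the curvilinear triangle bounded by the two maximal backward characteristics $\xi_+$, $\eta_+$ and the segment of $\{x=0\}$ between their feet (respectively, of $\{t=0\}$ for the $x<X_1^*$ case); strict convexity of $f$ makes each of the two lateral boundary integrals sign-definite, and assuming $u(\bar x+,\bar t)\neq\ur(\bar x+,\bar t)$ forces both integrals to vanish, which in turn identifies $\eta_+$ as a backward characteristic of $u$ sitting on the wrong side of the maximal one --- a contradiction. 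You need this rigidity argument (or an equivalent uniqueness statement for the mixed problem restricted to the region right of $X_2^*$); the heuristic alone does not suffice.
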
 

\begin{proof} 
Without loss of generality, we prove only the case when $x>X_2^*(t)$.

For any fixed $(\ovl{x},\ovl{t}) $ with $\ovl{x}>X_2^*(\ovl{t}),~ \ovl{t}>0$, we firstly prove that $u(\ovl{x}+,\ovl{t})=\ur(\ovl{x}+,\ovl{t})$.
Through $(\ovl{x},\ovl{t}) $ we draw the maximal backward characteristics $\xi_+(t)$ and $\eta_+(t)$ corresponding to the entropy solutions $u$ and $\ur$ respectively. It follows that $\xi_+(t)$ and $\eta_+(t)$ are both straight lines, and for $0<t<\ovl{t}$, 
$$ u(\xi_+(t)\pm,t)=u(\ovl{x}+,\ovl{t}),\quad \xi_+'(t)=f'(u(\ovl{x}+,\ovl{t})), $$
$$\ur(\eta_+(t)\pm,t)=\ur(\ovl{x}+,\ovl{t}),\quad \eta_+'(t)=f'(\ur(\ovl{x}+,\ovl{t})). $$
$\xi_+(t)$ (resp. $\eta_+(t)$) cannot cross through $X_2(t)$ (resp. $\Gamma_2(t)$) at $x<0$ since the forward characteristic issuing from any point $(x,t)$ with $t>0$ is unique. 
Therefore, $\xi_+(t)$ (resp. $\eta_+(t)$) lies above $X_2(t)$ (resp. $\Gamma_2(t)$), and hits the $t$-axis at $t_\xi$ (resp. $t_\eta$) above $0$.

We refer readers to \cite[Chapters 10 \& 11]{Dafer.2016} for details of properties of generalized characteristics, or \cite[Definition 3.1 \& Lemma 3.2]{Xin.Y.Y2019} for a short summary.


If $u(\ovl{x}+,\ovl{t})> \ur(\ovl{x}+,\ovl{t})$, then $t_\xi<t_\eta$. By applying integration by parts to the equation of $u$ and $\ur$ on the triangle $(\ovl{x}, \ovl{t})$-$(0,t_\xi)$-$(0,t_\eta)$, one has
\begin{align}
	& \int_{t_\eta}^{\ovl{t}} \{ f(b)-f(u(\eta_+(t)-,t)) - f'(b)[b- u(\eta_+(t)-,t)] \} ~dt \notag \\
	& + \int_{t_\xi}^{\ovl{t}} \{ f(\widetilde{b})-f(\ur(\xi_+(t)+,t)) - f'(\widetilde{b})[\widetilde{b}-\ur(\xi_+(t)+,t)] \} ~dt \label{tril} \\
	& =\int_{t_\xi}^{t_\eta} 
	[f(u(0,t)) -f(\ur(0,t))] ~dt = 0.	\notag
\end{align}
here $b=\ur(\ovl{x}+,\ovl{t}), \widetilde{b}=u(\ovl{x}+,\ovl{t}),$ and the right hand side of \eqref{tril} equals to 0 since $u$ and $\ur$ satisfy the same boundary condition.
By the strict convexity of $f$, the left hand side of \eqref{tril} is non-positive, and thus for $t \in [0,\ovl{t}]$,
\begin{align}
	& u(\eta_+(t)-,t) \equiv b= \ur(\ovl{x}+,\ovl{t}), \quad \ur(\xi_+(t)+,t) \equiv \widetilde{b}=u(\ovl{x}+,\ovl{t}).\label{for1} 
\end{align}
Then \eqref{for1} implies that 
$$ f'(u(\eta_+(t)-,t))\equiv f'(\ur(\ovl{x}+,\ovl{t}))=\eta_+'(t),$$
which means that $\eta_+(t)$ is a backward generalized characteristic through $(\ovl{x},\ovl{t})$ associated with $u$. However, $\xi_+(t)$ is the maximal backward characteristic associated with $u$, thus there must hold $\eta_+(t) \leq \xi_+(t), t\in [0,\ovl{t}]$, which contradicts with $t_\xi<t_\eta$.

Similarly,  $u(\ovl{x}+,\ovl{t})< \ur(\ovl{x}+,\ovl{t}) $ could also lead a contradiction, and thus $u(\ovl{x}+,\ovl{t})= \ur(\ovl{x}+,\ovl{t}) $.
The proof to $ u(\ovl{x}-,\ovl{t})=\ur(\ovl{x}-,\ovl{t}) $ is similar by applying minimal backward characteristics.
So $u(x,t)=\ur(x,t)$ for $x>X_2^*(t)$.
\end{proof}

Note that $X_1^*$ and $X_2^*$ may not be the generalized characteristics of $u$. 
When $X_1^*$ and $X_2^*$ coincide after a large time as shown in Lemma \ref{lem-coincide}, they become the shock wave curve of $u$. The proof is motivated by \cite{Glimm.L1970,Liu.1977a} and our previous work \cite{Yuan.Y2020}.

\begin{lemma}\label{lem-coincide}
	There exists a finite time $ T_S>0, $ such that 
	$ X_1^*(t) = X_2^*(t) $ for all $ t\geq T_S. $
\end{lemma}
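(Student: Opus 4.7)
My plan is to reduce the equality $X_1^*=X_2^*$ to two simultaneous facts and prove each in turn: (i) $X_1(t)\leq\Gamma_1(t)$ and $X_2(t)\geq\Gamma_2(t)$ for all sufficiently large $t$ (so that $X_1^*=X_1$ and $X_2^*=X_2$ by the $\min/\max$ definitions), and (ii) the extremal forward characteristics merge, $X_1(T_S)=X_2(T_S)$, at some finite time $T_S$. Part (i) is a linear-geometry comparison: $\Gamma_1(t)=x_0+f'(\ubl)t$ and $\Gamma_2(t)=f'(\ubr)(t-t_b)$ with $x_0\leq 0$, $t_b\in[0,T)$, so by the Lax condition \eqref{non-degenerate-shock} one has $\Gamma_1(t)-\Gamma_2(t)=(f'(\ubl)-f'(\ubr))t+O(1)\to+\infty$ linearly; the merged curve $X_1=X_2$ (once obtained) will sit at $st+O(1)$ by Rankine--Hugoniot, and $s<f'(\ubl)$ together with $s>f'(\ubr)$ then force the required one-sided separations from $\Gamma_1$ and $\Gamma_2$.

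The crux is (ii), the finite-time merger. By Proposition~\ref{lem-gluing}, the outer one-sided traces are identified as $u(X_1-,t)=\ull(X_1-,t)$ and $u(X_2+,t)=\ur(X_2+,t)$. Two decay estimates control these traces: $\|\ull(\cdot,t)-\ubl\|_{L^\infty}\leq Ct^{-1/2}$ by Lemma~\ref{lem-lr-decay} with $r=0$ (valid since $u_0-\ubl\in L^1$), and $|\ur(x,\cdot)-\ubr|\leq C/|x|$ as $x\to-\infty$ by Lemma~\ref{lem-per-t0}. Invoking the theory of extremal generalized characteristics for convex scalar conservation laws (Dafermos, Ch.~10--11), and using the minimal (resp.~maximal) nature of $X_1$ (resp.~$X_2$) to select the appropriate endpoint of the Lax-admissible speed range, I would derive, in the a.e.~sense, a uniform slope-separation
\begin{equation*}
X_1'(t)-X_2'(t)\geq c>0\qquad\text{for all }t\geq t_2\text{ at which }X_1(t)<X_2(t),
\end{equation*}
for some constants $c,t_2>0$, leveraging strict convexity of $f$. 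Since $X_1\leq X_2$ always, integration then forces the gap $X_2-X_1$ to close at some finite $T_S\leq t_2+(X_2(t_2)-X_1(t_2))/c$. Thereafter, uniqueness of the forward generalized characteristic through the merged point for $t>0$ yields $X_1\equiv X_2$ on $[T_S,\infty)$, completing (ii). Combining (i) and (ii), possibly after enlarging $T_S$, $X_1^*(t)=X_2^*(t)$ for all $t\geq T_S$.

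The main obstacle is the slope-separation bound. The subtlety is that $X_1$ and $X_2$ need not be classical characteristics throughout: they can alternate between classical segments (with slope $f'(u)$) and shock segments (with Rankine--Hugoniot speed), so one cannot simply write an ODE for them. The correct comparison reduces, via the extremal property, to comparing the chord slopes $s_{\mathrm{RH}}(\ubl,u_l)$ and $s_{\mathrm{RH}}(u_r,\ubr)$, where $u_l:=u(X_1+,t)$ and $u_r:=u(X_2-,t)$ are the inner fan values; strict convexity of $f$ gives the strict inequality generically, and the degenerate configuration in which the fan spans the entire range $[\ubr,\ubl]$ (a shock--fan--shock profile with both shocks at the common speed $s$) must be excluded by entropy admissibility for convex scalar conservation laws. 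This structural analysis is carried out in the spirit of Glimm--Lax and Liu, and the author's earlier work \cite{Yuan.Y2020}.
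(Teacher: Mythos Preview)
Your proposal has two genuine gaps, and the overall logical structure linking (i) and (ii) is circular.

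First, the trace identifications you invoke from Proposition~\ref{lem-gluing} are not available at the stage you use them. That proposition gives $u=\ur$ only for $x>X_2^*(t)=\max\{X_2(t),\Gamma_2(t)\}$, so $u(X_2(t)+,t)=\ur(X_2(t)+,t)$ requires $X_2(t)\geq\Gamma_2(t)$, which is precisely part of your (i). Since initially $X_2(t_b)<0=\Gamma_2(t_b)$, you cannot run (ii) off Proposition~\ref{lem-gluing} before (i), yet your sketch of (i) presupposes the merger from (ii). The paper avoids this by working directly with $D(t):=X_2^*(t)-X_1^*(t)$ and performing a case analysis on the ordering of $X_1,\Gamma_1,X_2,\Gamma_2$, so that in every case either the divides supply the needed slope or the trace identification is legitimately available.

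Second, and more seriously, the uniform slope-separation bound $X_1'(t)-X_2'(t)\geq c>0$ does not follow from the ingredients you list. The inner traces $u_l=u(X_1+,t)$ and $u_r=u(X_2-,t)$ are determined by backward characteristics returning to the origin, whence $f'(u_l)=X_1(t)/t$ and $f'(u_r)=X_2(t)/t$. A mean-value decomposition of $D'(t)=X_2'-X_1'$ therefore contains the contribution $\theta(t)\bigl(X_2(t)-X_1(t)\bigr)/t=\theta(t)\,D(t)/t$ with $\theta(t)\in(0,1)$; this term is nonnegative and is \emph{not} uniformly $o(1)$. Your proposed exclusion of the ``degenerate'' configuration $u_l\approx\ubr$, $u_r\approx\ubl$ by entropy admissibility does not work: entropy gives only $u(X_i-,t)\geq u(X_i+,t)$, which is compatible with that configuration, and in it both chord slopes approach $s$, so $X_1'-X_2'$ can be arbitrarily small. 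The paper's correct bound is the differential inequality
\[
D'(t)\ \leq\ \theta_0\,\frac{D(t)}{t}\ +\ \frac{1-\theta_0}{2}\bigl(f'(\ubr)-f'(\ubl)\bigr)
\]
for a fixed $\theta_0\in(0,1)$ and all large $t$; multiplying by $t^{-\theta_0}$ and integrating yields $D(t)\leq \tfrac12\bigl(f'(\ubr)-f'(\ubl)\bigr)t + C\,t^{\theta_0}$, which forces $D(t)=0$ in finite time. This Gronwall-type step, not a uniform negative bound on $D'$, is the mechanism that closes the gap.
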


\begin{proof}
	We first define that for $u\geq v$, 
	\begin{equation}\label{Def-lam-sig}
		\varpi(u,v):= \int_0^1 f'(v+\theta (u-v)) d\theta.
	\end{equation}
	Therefore,  $\varpi(\ubl,\ubr)$ is the shock speed of the shock $ u^S$. Since $f$ is convex, it holds that
	\begin{align*}
		&	\varpi(u,v) \leq \varpi (u,s) \qquad \forall s\in[v,u],\\
        &	f'(v) =\varpi(v,v)\leq \varpi(u,v)\leq \varpi(u,u)=f'(u).
	\end{align*}

    Set $D(t):=X_2^*(t)-X_1^*(t)\geq 0$. It suffices to prove that there exists a finite time $ T_S>0, $ such that 
    $ D(t)\equiv 0 $ for all $ t\geq T_S. $ 
    
    It claims that there exists a finite time $ T_0>0 $ and $ \theta_0 \in (0,1), $ independent of time $ t, $ such that
    \begin{equation}\label{ineq-D}
    	D'(t) \leq ~\frac{\theta_0 D(t)}{t} + \frac{1-\theta_0}{2}\left( f'(\ubr) - f'(\ubl) \right), \quad t>T_0.
    \end{equation}
    Indeed, it can be established in the following cases.
	\begin{enumerate}
		\item[(i)] $ \Gamma_1(t) \leq X_1(t) \leq X_2(t) \leq  \Gamma_2(t). $ In this case, $D(t)=\Gamma_2(t)-\Gamma_1(t)$ and $D(t)'= f'(\ubr)-f'(\ubl) <0 $. Therefore, \eqref{ineq-D} holds.
		
		\item[(ii)] $ X_1(t)<\Gamma_1(t) < \Gamma_2(t) < X_2(t). $ In this case,  $D(t)=X_2(t)-X_1(t)$, then it follows from \cref{lem-gluing} and mean-value theorem that for some $\theta(t)\in (0,1)$, 
		\begin{align*}
			D'(t) & = \varpi(u(X_2(t)-,t),u(X_2(t)+,t))- \varpi(u(X_1(t)-,t),u(X_1(t)+,t)) \\
			& = \varpi(u(X_2(t)-,t),\ur(X_2(t)+,t))- \varpi(\ull(X_1(t)-,t),u(X_1(t)+,t))\\
			& = \theta(t) [f'(u(X_2(t)-,t))-f'(u(X_1(t)+,t))] \\
			&\qquad+(1-\theta(t)) [f'(\ur(X_2(t)+,t))-f'(\ull(X_1(t)+,t))].
		\end{align*}
	   As stated in \cite{Liu.1977a}, $\theta(t)$ depends on $u(X_1(t)\pm, t), u(X_2(t)\pm, t)$, which range over a compact set in $u$-space, and therefore,  there exists a $ \theta_1 \in (0,1), $ independent of time $ t, $ such that 
	   \begin{align*}
	   	D'(t) &\leq \theta_1 [f'(u(X_2(t)-,t))-f'(u(X_1(t)+,t))] \\
	   	&\qquad+(1-\theta_1) [f'(\ur(X_2(t)+,t))-f'(\ull(X_1(t)+,t))].
	   \end{align*}
	   
       Since $X_{1,2}(t)$ are forward characteristic of $u$ issuing from the origin, the intersection of the maximal (resp. minimal) backward characteristic of $ u $ emanating from $ (X_1(t),t) $ (resp. $ (X_2(t),t) $) with $ x$-axis is the origin, which together with \eqref{ic-incoming} gives that
       $$f'(u(X_2(t)-,t))=X_2(t)/t< 0, \quad f'(u(X_1(t)+,t))=X_1(t)/t<0.$$
       And thus, when $\Gamma_2(t)<X_2(t)$, 
       \begin{equation}
       	\label{ineq-X2'}
       	X_2'(t)= \varpi(u(X_2(t)-,t),\ur(X_2(t)+,t)) < f'(\ur(X_2(t)+,t)) < -\delta_b<0.
       \end{equation}
       With the aid of $\Gamma_2'(t)=f'(\ubr)<0$, it yields that $X_2(t)/t < -\delta_b/2$ after a certain time.      
       It follows from the decay estimates of $u_\pm(x,t)$, Lemmas \ref{lem-per-t0} and \ref{lem-lr-decay} that 
		\begin{equation}
			\begin{aligned}
				D'(t)  &\leq \theta_1 \left( \frac{X_2(t)}{t} - \frac{X_1(t) }{t}\right) \\
				&\quad + \left( 1-\theta_1 \right) \left( f'(\ubr)+ \frac{C}{|X_2(t)|} - f'(\ubl) + \frac{C}{\sqrt{t}} \right), \\
				&\leq \theta_1 \frac{D(t)}{t} + \left( 1-\theta_1 \right) \left( f'(\ubr) - f'(\ubl) + \frac{C}{\sqrt{t}} \right).
			\end{aligned}
		\end{equation}
		So \eqref{ineq-D} holds true for a $ \theta_0>\theta_1 $ and a large enough $ T_0>0 $.
		
		\item[(iii)] $\Gamma_1(t) < X_1(t)< \Gamma_2(t) < X_2(t) $ or $ X_1(t)<\Gamma_1(t)  < X_2(t) < \Gamma_2(t). $ In these two cases, the proofs are same as the above case.
	\end{enumerate}

    Given the claim \eqref{ineq-D}, we have that
$$
\frac{d}{dt} \left(t^{-\theta_0}D(t)  \right) \leq \frac{1-\theta_0}{2} t^{-\theta_0} \left( f'(\ubr) - f'(\ubl) \right).
$$
Thus, for all $ t>T_0, $
\begin{equation}\label{ineq-D-1}
	D(t) \leq \frac{1}{2} \left( f'(\ubr) - f'(\ubl) \right) t + C(T_0) t^{\theta_0}.
\end{equation}
Since $ f'(\ubr) < f'(\ubl), $ \eqref{ineq-D-1} implies that $ D(t) = 0 $ after some positive time $ T_S>T_0 $. 

\end{proof}

\subsection{Estimates on the shock curve}

	 We define $X(t)=X_1^*(t)$. Hence, if follows from 
	 \cref{lem-coincide} that after $T_S$, $X(t)=X_1^*(t)=X_2^*(t)$. And together with \cref{lem-gluing}, it holds that $u=\ull(x,t)$ if $x<X(t)$ and $u=\ur(x,t)$ if $x>X(t)$. 
	 In virtue of \cref{lem-lr-decay}, $\ull=\ubl+\mathcal{O}(1)t^{-1/2}$ and $\ur=\ubr+\mathcal{O}(1) x^{-1}$. 
	 Consequently, the decay estimates in \eqref{ineq-thm-shock1} is easily established. 
	 
	 It suffices to prove the estimates of $X(t)$, i.e., \eqref{ineq-shift} and \eqref{eq-finalshift} after $t>T_S$. Before the proof, we give a lemma about the integrals on $\ull$.
	 
	 \begin{lemma}
	 	\label{lem-bdd-divides}
	 	Assume that $u_0(x)-\ubl\in L^1(\R)$ for some constant $\ubl$. Let $\ull$ be the admissible solution to \eqref{eqn-0} and \eqref{ic0} for $\mu=0$.
	 	Then for any $\xb(t)\in\R$ satisfying $\xb(t)+\mathcal{O}(1)t^{1/2} \rightarrow -\infty$ as $t\rightarrow\infty$, define $\xi_1(\tau):=\xb(t)+ f'(\ubl)\tau$ for $0\leq \tau\leq t$. It holds that
	 	\begin{equation*}
	 		\label{ineq-bdd-divide}
	 		\begin{aligned}
	 			\int_{0}^{t}  [f(\ull(\xi_1(\tau)\pm,\tau)) - f'(\ubl) \ull(\xi_1(\tau)\pm,\tau) ]  d\tau  
	 			- t[f(\ubl)&-f'(\ubl)\ubl] \rightarrow 0 \quad \text{as~}t\rightarrow\infty.
	 		\end{aligned}
	 	\end{equation*}
	 \end{lemma}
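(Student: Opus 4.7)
The key algebraic identity is
\[
[f(\ull) - f'(\ubl)\ull] - [f(\ubl) - f'(\ubl)\ubl] = [f(\ull) - f(\ubl)] - f'(\ubl)[\ull - \ubl],
\]
which is exactly the flux of $(\ull-\ubl,\, f(\ull)-f(\ubl))$ across a curve of slope $f'(\ubl)$. Applying the divergence theorem to the conservation law $\p_\tau(\ull-\ubl)+\p_y(f(\ull)-f(\ubl))=0$ on the region $D := \{(y,\tau): 0<\tau<t,\ y<\xi_1(\tau)\}$, using outward normal proportional to $(1,-f'(\ubl))$ on the lateral side $y=\xi_1(\tau)$ and the usual vertical normals on the horizontal sides, yields
\[
\int_0^t \bigl\{[f(\ull(\xi_1(\tau),\tau))-f(\ubl)] - f'(\ubl)[\ull(\xi_1(\tau),\tau)-\ubl]\bigr\}\, d\tau = \int_{-\infty}^{\xb(t)}(u_0-\ubl)\,dy - \int_{-\infty}^{\xi_1(t)}(\ull(y,t)-\ubl)\,dy.
\]
The left-hand side is exactly the quantity we want to drive to zero, so the task reduces to showing that both spatial integrals on the right vanish as $t\to\infty$.

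The first spatial integral is immediate: since $u_0-\ubl\in L^1(\R)$ and $\xb(t)\to -\infty$, $\int_{-\infty}^{\xb(t)}(u_0-\ubl)\,dy\to 0$. The $\pm$-trace distinction in the statement is harmless: transversal crossings of $\xi_1$ with shocks of $\ull$ affect only a measure-zero set of $\tau$, while an overlap of $\xi_1$ with a shock curve would force that shock to have speed $f'(\ubl)$ by Rankine-Hugoniot, which in turn makes $[f(\ull)-f(\ubl)]-f'(\ubl)[\ull-\ubl]$ take the same value on both sides.

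The main obstacle is the second integral $M(t):=\int_{-\infty}^{\xi_1(t)}(\ull(y,t)-\ubl)\,dy$. The pointwise decay $|\ull-\ubl|=\mathcal O(t^{-1/2})$ from Lemma \ref{lem-lr-decay} is by itself insufficient, because under the hypothesis the relevant interval has length of order $|\xb(t)|$, which is larger than any constant multiple of $\sqrt{t}$. My plan is $L^1$-truncation combined with the $N$-wave structure. Given $\varepsilon>0$, choose $N$ with $\int_{-\infty}^{-N}|u_0-\ubl|\,dy<\varepsilon$ and define $u_0^N:=\ubl$ on $\{x<-N\}$, $u_0^N:=u_0$ on $\{x\geq -N\}$; recalling the standing extension $u_0\equiv\ubl$ on $\{x>0\}$, the perturbation $u_0^N-\ubl$ is compactly supported in $[-N,0]$. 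The $L^1$-contraction of scalar entropy solutions gives $\|\ull(\cdot,t)-\ull^N(\cdot,t)\|_{L^1(\R)}\leq\varepsilon$ uniformly in $t$. For a compactly supported perturbation of a constant state in a strictly convex scalar conservation law, the classical $N$-wave asymptotics show that $\ull^N(y,t)=\ubl$ outside an interval centred on $y=f'(\ubl)t$ of half-width $\mathcal O(\sqrt{t})$; in particular the left shock envelope of $\ull^N$ stays to the right of $f'(\ubl)t-C_N\sqrt{t}$ for some $C_N>0$. The hypothesis $\xb(t)+\mathcal O(1)\sqrt{t}\to -\infty$ is precisely what forces $\xi_1(t)=\xb(t)+f'(\ubl)t$ to lie to the left of this envelope for all sufficiently large $t$, so $\int_{-\infty}^{\xi_1(t)}(\ull^N-\ubl)\,dy=0$ eventually. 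Combined with $\bigl|\int_{-\infty}^{\xi_1(t)}(\ull-\ull^N)\,dy\bigr|\leq\varepsilon$, this gives $|M(t)|\leq\varepsilon$ for all large $t$; since $\varepsilon$ was arbitrary, $M(t)\to 0$, completing the proof.
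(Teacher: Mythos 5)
Your proposal is correct, and the Green's-theorem identity you start from is the right object, but you close the argument along a genuinely different route from the paper. You integrate the conservation law over the whole half-strip $\{y<\xi_1(\tau),\,0<\tau<t\}$, which forces you to control the $\tau=t$ trace $M(t)=\int_{-\infty}^{\xi_1(t)}(\ull-\ubl)\,dy$; to do so you invoke $L^1$-contraction against a compactly truncated datum and the Lax $N$-wave envelope (the minimal forward characteristic from the left edge of the truncated support stays within $\mathcal O(\sqrt t)$ of $f'(\ubl)t$ because its speed differs from $f'(\ubl)$ by $\mathcal O(\tau^{-1/2})$). The paper instead integrates over the \emph{triangle} bounded by $\xi_1$, the $x$-axis, and the minimal backward characteristic $\eta_1$ of $\ull$ emanating from $(\xi_1(t),t)$. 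Since $\ull$ is constant along $\eta_1$ and since $\eta_1(0)=\xb(t)+\mathcal O(\sqrt t)$ by the pointwise decay, Green's theorem produces two flux integrals that, by convexity of $f$, are each non-positive, plus a bottom-edge term bounded by $\int_{-\infty}^{\xb(t)+\mathcal O(\sqrt t)}|u_0-\ubl|\,dy\to 0$; both non-positive integrals are then squeezed to zero, and the second one is exactly the claimed quantity. The paper's triangle trick buys two things your route does not: it never has to deal with the $\tau=t$ edge at all, and consequently it needs neither the $L^1$-contraction truncation nor the $N$-wave support-envelope fact, only the pointwise decay of Lemma \ref{lem-lr-decay} and elementary convexity. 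Your argument is sound provided you read the hypothesis as ``$\xb(t)+C\sqrt t\to-\infty$ for every constant $C$'' (which is how the lemma is used, since there $\xb(t)=(s-f'(\ubl))t+\mathcal O(\sqrt t)$ with $s<f'(\ubl)$), and provided you are willing to treat the $N$-wave envelope claim as citable; it is classical, but in a self-contained write-up you would need to supply the one-paragraph forward-characteristic argument behind it. Your remark on the $\pm$ traces is correct: if $\xi_1$ overlaps a shock, Rankine--Hugoniot with speed $f'(\ubl)$ forces $[f(\ull)-f(\ubl)]-f'(\ubl)[\ull-\ubl]$ to agree across the jump.
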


	 \begin{proof}
	 	Let $\eta_1(\tau)$ be the minimal backward characteristic, associated with $\ull$, that emanate from fixed point $(\xi_1(t),t)$. Thus, by the theory of the generalized characteristics of 1D convex scalar conservation law, $\eta_1(\tau)$ is a straight line and $\ull(\eta_1(\tau)\pm,\tau)\equiv \ull(\xi_1(t)-,t)$ for $\tau\in(0,t)$. ($\xi_1(\tau)$ is also the extremal backward characteristic associated with the constant solution $\ubl$.) 
	 	In virtue of \cref{lem-lr-decay}, $\ull(\xi_1(t)-,t)=\ubl+\mathcal{O}(1)t^{-1/2}$, and thus $\xi_1(t)=\xb(t)+ f'(\ubl)t$, $\eta_1(0)=\xb(t)+\mathcal{O}(1)t^{1/2}$.
	 	
	 	Integrating \eqref{eqn-0} over the triangle with vertices $(\xi_1(t),t)$, $(\eta_1(0),0)$ and $(\xb,0)$, and applying Green's theorem, it yields that 
	 	\begin{align*}
	 		&\int_{0}^{t} \{ f(\ull(\xi_1(t)-,t))- f(\ubl) - f'(\ull(\xi_1(t)-,t)) [\ull(\xi_1(t)-,t)-\ubl]  \} d\tau  \\
	 		+&\int_{0}^{t} \{  f(\ubl)-f(\ull(\xi_1(\tau)\pm,\tau)) - f'(\ubl) [\ubl - \ull(\xi_1(\tau)\pm,\tau)] \} d\tau \\
	 		=	&-\int_{\eta_1(0)}^{\xb(t)} (u_0(y)-\ubl) dy
	 		\geq - \int_{-\infty}^{\xb(t)+\mathcal{O}(1)t^{1/2}} |u_0(y)-\ubl| dy
	 	\end{align*}
	 	Thanks to the convexity of $f$, both integrals on left hand side of the above inequality are non-positive. By the condition $\xb(t)+\mathcal{O}(1)t^{1/2} \rightarrow -\infty$ as $t\rightarrow\infty$ and $u_0(x)-\ubl\in L^1(\R)$, the right hand side of the above inequality tends to $0$ as $t\rightarrow\infty$. Therefore, \cref{lem-bdd-divides} easily follows. 
	 \end{proof}

\begin{proof}[Proof of Theorem \ref{thm-shock}]
	 For fixed $t>T_S$ and $\tau\in[0,t]$, set
	 \begin{align*}
	 	\xi_1(\tau)&:=X(t)+f'(\ubl)(\tau-t) \quad \\
	 	\xi_2(\tau)&:\text{the divide of~} \ur \text{~with value~}  \ubr \text{~issuing from~} (0,mT+t_b) \\
	 	&~\text{for}~m\in\mathbb{N}_+\text{~s.t.~} \xi_2(t)-X(t)\in [0,f'(\ubr)T).
	 \end{align*}
	 So $\xi_1(\tau)$ emanates from $(X(t),t)$, hitting $t=0$ at $(X(t)-f'(\ubl)t,0)$, and $t_b$ satisfies \eqref{ineq-per-t0-2}. 
	 
	 Noting that $X'(t)=\jump{f(u)}/\jump{u}=s +\mathcal{O}(t^{-1/2})$, it is easy to find $T_1>T_S$ such that $\forall t>T_1$, $\xi_{i}(\tau)$  does not intersect with $\Gamma_{i}(t)$ for $i=1,2$. 
	 Integrating \eqref{eqn-0} over the pentagon with vertices $(X(t),t)$, $(\xi_1(0),0)$, $(0,0)$, $(0,\xi_2(0))$, and $(\xi_2(t),t)$, and applying Green's theorem, Lemmas \ref{lem-gluing}, it yields that 
	 \begin{align*}
	 	&\int_{0}^{t}  [f(\ull(\xi_1(\tau)+,\tau)) - f'(\ubl) \ull(\xi_1(\tau)+,\tau) ]  d\tau
	 	+\int_{X(t)-f'(\ubl)t}^{0} u_0(y) dy \\
	 	&\quad - \int_{0}^{mT+t_b} f(u_b(\tau)) d\tau
	 	-\int_{mT+t_b}^{t} [ f(\ur(\xi_2(\tau)-,\tau)) - f'(\ubr) \ur(\xi_2(\tau)-,\tau)]d\tau\\
	 	&\qquad  +\int_{\xi_2(t)}^{X(t)} \ur(y,t) dy=0.
	 \end{align*}
	 In virtue of \cref{lem-bdd-divides}, together with $X(t)-f'(\ubl) t +\mathcal{O}(1)t^{1/2}=st-f'(\ubl) t+\mathcal{O}(1)t^{1/2}\rightarrow -\infty$, and the decay estimates of $u_\pm$, it holds that as $t\rightarrow \infty$, 
	 \begin{align*}
	 	&\int_{0}^{t}  [f(\ull(\xi_1(\tau)+,\tau)) - f'(\ubl) \ull(\xi_1(\tau)+,\tau) ]  d\tau- t[f(\ubl)-f'(\ubl)\ubl]\rightarrow 0,\\
	 	&\int_{X(t)-f'(\ubl)t}^{0} u_0(y) dy - \ubl(-X(t)+f'(\ubl)t) - \int_{-\infty}^{0} (u_0(y)-\ubl) dy\rightarrow 0,\\
	 	&\int_{0}^{mT+t_b} f(u_b(\tau)) d\tau=\max\limits_{t\in[0,T]} \int^t_0 [ f(u_b(\tau))-f(\ubr)] d\tau+f(\ubr)(mT+t_b),\\
	 	&\int_{mT+t_b}^{t} [ f(\ur(\xi_2(\tau)-,\tau)) - f'(\ubr) \ur(\xi_2(\tau)-,\tau)]d\tau=(t-mT-t_b)[f(\ubr)-f'(\ubr)\ubr],\\
	 	&\int_{\xi_2(t)}^{X(t)} \ur(y,t) dy = \ubr [X(t)-f'(\ubr)(t-mT-t_b)] + \mathcal{O}(|X(t)|^{-1}),
	 \end{align*}
	 which yields that 
	 \begin{equation}
	 	\label{ineq-bdd-per2}
	 	\jump{\ub}X(t)-\jump{f'(\ub)} t - \max\limits_{t\in[0,T]} \int^t_0 [ f(u_b(\tau))-f(\ubr)] d\tau + \int_{-\infty}^{0} (u_0(y)-\ubl) dy\rightarrow 0.
	 \end{equation} 
	 Therefore, \eqref{ineq-shift} and \eqref{eq-finalshift} are proved.
	 
\end{proof}

\section{Stability of the viscous shock}

The proof of Theorem \ref{thm-shock-viscous} is based on the weighted energy estimates developed by \cite{Liu.N1997}.
Using the carefully constructed ansatz $\usharp$ in \eqref{def-ansatz-shock}, we reformulate the original problem \eqref{eqn-0}, \eqref{bc0} and \eqref{ic0} as a problem for the anti-derivative of $u-\usharp$.

\subsection{Reformulation of the problem}
Since $\ur$ and $\phi^S$ are solutions to \eqref{eqn-0}, $\usharp$ defined in \eqref{def-ansatz-shock} satisfies
\begin{equation}
	\label{eqn-usha-1}
	\p_t\usharp+ \p_x f(\usharp) -\p_x^2\usharp=h,
\end{equation}
where by denoting $\ull=\ubl$, the source term $h$ is 
\begin{align}
	h=&  -\jump{u}\p_x\sigma_{\X}(s+ \X') + \p_x f(\usharp) -\p_x f(\ur) \sigma_{\X} \notag\\
	&\quad -\jump{u}\p_x^2\sigma_{\X}-2\jump{\p_x u} \p_x\sigma_{\X}  \notag\\
	=&\left\{ - \jump{u}(s+ \X') + \jump{f(u)-\p_x u} \right\} \p_x\sigma_{\X}  	\notag\\
	&\quad +\p_x \left\{ f(\usharp)-f(\ubl)(1-\sigma_{\X})-f(\ur)\sigma_{\X}-\jump{u} \p_x \sigma_{\X}  \right\}. 		 \label{eqn-source-1}	
\end{align} 
In virtue of the equation of $\phi^S$ \eqref{eqn-shock1} and the Rankine-Hugoniot condition \eqref{RH}, $h$ can be rewritten as
\begin{equation}
	\label{eqn-source-2}
	\begin{aligned}
		h&=-\X' \p_x\phi^S_{\X}+\p_x H_1+ h_2 \,,  \\ 
		\text{where~}H_1&=	 f(\usharp)-f(\phi^S_{\X}) -(f(\ur)-f(\ubr))\sigma_{\X}- (\ur - \ubr) \p_x \sigma_{\X} \,,  	\\
		h_2&=\left\{ - (\ur-\ubr)(s+\X') + (f(\ur)-f(\ubr)-\p_x \ur) \right\} \p_x\sigma_{\X}
		\,, \\
		\text{and~} H_2 &=\int_{-\infty}^{x} h_2(y,t) dy\,.
	\end{aligned}	
\end{equation}

Note that when $\ur(t)\equiv\ubr$, $\usharp\equiv\phi^S_{\X}$ and $H_1=h_2\equiv 0$. Therefore, $H_1, h_2$ are source terms induced by the time-periodic boundary. In the following, we will prove that $H_1$ and $h_2$ are well decaying with respect to both $x$ and $t$. 

To enable that $U(x,t)$ is well-defined, the shift curves $\X$ are set to satisfy 
\begin{equation}
	\label{cond-shift}
	\int_{-\infty}^0 (u-\usharp) (x,t) dx =0 \quad \text{for all~}t\geq0.
\end{equation}
The above equation hold when it hold initially and $\frac{d}{dt}\int_{-\infty}^0 (u-\usharp) (x,t) dx =0$  for all $t>0$.
It follows from \eqref{eqn-usha-1} and \eqref{eqn-source-2} that $\X(t)$ has to satisfy
\begin{equation}
	\label{eqn-shift1}
	\begin{aligned}
		\Big(\int_{-\infty}^0 \jump{u} \p_x \sigma_{\X} dx\Big)&  \X'(t) = \Big( f(\ubr)-f(\phi^S_{\X})\big|_{x=0}-\p_x^2 U(0,t) \Big)   \\
		& +\int_{-\infty}^0 \{ - (\ur - \ubr) s+ f(\ur)-f(\ubr)-\p_x \ur \} \p_x\sigma_{\X} dx\\
		& +\Big( (f(u_b)-f(\ubr))(1-\sigma_{\X}) - (u_b-\ubr) \p_x\sigma_{\X}  \Big)\big|_{x=0} \,	,
	\end{aligned}
\end{equation}
and $\X_0 :=\X(0)$ is determined by
\begin{equation}
	\label{def-shift1}
	\int_{-\infty}^0 \{u_0(x)- \phi^S(x-\X_0) - (\ur(x,0)-\ubr) \sigma(x-\X_0) \}\, dx =0 \,.
\end{equation}
It is noted that the first line of the right hand side of \eqref{eqn-shift1} is the similar as \cite{Liu.N1997}, the reminders are quadratic terms and decay exponentially to $0$ when $t\rightarrow+\infty$; see Lemma \ref{lem-shift1}.

Thanks to \eqref{eqn-usha-1} and \eqref{cond-shift}, the anti-derivative $U(x,t)$ satisfies
\begin{equation}
	\label{eqn-anti-deriv}
	\begin{aligned}
		\p_t U+\X'(\ubl-\phi^S_{\X})+ &f(\usharp+\p_x U)-f(\usharp) -\p_x^2 U=-H_1-H_2, 	\\
		U(x,0) & = U_0(x), \quad\qquad x\in \R_- ,		
		\\
		U(-\infty,t)&=U(0,t) =0, \quad   t\geq 0, 
	\end{aligned} 
\end{equation}
where $H_1, H_2$  are defined in \eqref{eqn-source-2}. 

\vline

The theorem for the stability of the viscous shock, Theorem \ref{thm-shock-viscous}, follows from the following theorem on the anti-derivative $U$.

The solution spaces $\mathcal{S}(0,T)$ and $\mathcal{Z}(0,T)$ for $U$ and $\X$ are, respectively,
\begin{align*}
	\mathcal{S}(0,T)=& \big\{ U\in C([0,T]; H_{\beta}^3), ~ \p_x U\in L^2(0,T; H_{\beta}^3), ~\text{and}\\
	&\quad(1+\X_0 +t)^{\beta+3}\abs{\p_x^2 U(0,t) }^2+ \int_{0}^{t}(1+\X_0 +\tau)^{\beta+3}\abs{\p_x^2 U(0,\tau)}^2<\infty \big\},\\
	\mathcal{Z}(0,T)=&\big\{\X(t)\in C^1[0,T],~ \abs{\X(t)-\X_0}<\infty \big\},
\end{align*}
where $\beta>0$.
For simplicity, we define 
\begin{equation}
\label{small-periodic}
 \nu:=\norm{u_b(t)-\ub_b}_{H^3_{\mathrm{per}}(0,T)}.
\end{equation}  
\begin{theorem}
	\label{thm-anti-deriv}
	Assume $\mu=1$, and that $ u_b\in$ $ H^1_{\mathrm{per}}(0,T) $ is periodic with period $T$ and average $ \ub_b$, and that $\ur(x,t)$ is the time-periodic entropy solution in Lemma \ref{lem-per-t1} with $\ur(x,t)\rightarrow \ubr $ as 	$x\rightarrow-\infty$. Furthermore, assume \eqref{RH} with $s<0$, \eqref{E}, and that the shock is non-degenerate, i.e. \eqref{non-degenerate-shock}
	. 
	
	Then there exists $ \varepsilon_0>0$ such that if
	$$  \norm{U_0}_{H_{\beta}^3}+|\X_0|^{-1} + \nu < \varepsilon_0, $$ 
	then the problem \eqref{eqn-anti-deriv} and \eqref{eqn-shift1} admits a unique global solution $ (U,\X) \in \mathcal{S}(0,\infty) \times \mathcal{Z}(0,\infty)$ satisfying
	\begin{equation*}
		\label{def-final shift}
		\X'\in L^1(0,\infty) ~ \text{and for some constant } \X_\infty,~
		 \X(t)\rightarrow  \X_\infty \text{ as } t\rightarrow \infty.
	\end{equation*}
\end{theorem}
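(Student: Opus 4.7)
The plan is to prove Theorem \ref{thm-anti-deriv} by the weighted anti-derivative energy method introduced for scalar viscous conservation laws with boundaries by Liu--Nishihara \cite{Liu.N1997}, suitably modified to accommodate the boundary-wave source terms $H_1,H_2$ produced by the ansatz \eqref{def-ansatz-shock}. Local existence for the coupled system \eqref{eqn-anti-deriv}--\eqref{eqn-shift1} on a short interval $[0,\tau_0]$ follows from a Banach fixed point argument: with a Lipschitz candidate $\X$ given, \eqref{eqn-anti-deriv} is a quasilinear parabolic IBVP with homogeneous Dirichlet data, and standard $H^3_\beta$ parabolic theory produces a unique $U$; feeding $U$ back into the ODE \eqref{eqn-shift1}, whose leading coefficient $\int_{-\infty}^0\jump{u}\p_x\sigma_{\X}\,dx = \jump{u}\,\sigma(-st-\X)$ is non-degenerate since $|\X_0|$ is large and negative, produces a new $\X$, and contractivity closes for $\tau_0$ small. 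Global extension then reduces, by a standard continuation argument, to a uniform a priori bound for
\begin{equation*}
N(T)^2 := \sup_{0\le t\le T}\!\left(\norm{U(t)}_{H^3_\beta}^2+|\X(t)-\X_0|^2\right) + \int_0^T \norm{\p_x U}_{H^3_\beta}^2\,dt + \int_0^T (1+\X_0+t)^{\beta+3}|\p_x^2 U(0,t)|^2\,dt,
\end{equation*}
under the a priori smallness assumption $N(T)\le \varepsilon_1\ll 1$.

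The heart of the argument is then the weighted energy estimate, performed by multiplying \eqref{eqn-anti-deriv} successively by $\langle x\rangle^\beta U$, $\langle x\rangle^{\beta+1}\p_x U$, $\langle x\rangle^{\beta+2}\p_x^2 U$ and $\langle x\rangle^{\beta+3}\p_x^3 U$, integrating by parts on $\R_-$, and exploiting the Lax entropy condition \eqref{non-degenerate-shock} together with the shock-profile decay \eqref{ineq-shock1}. The linearization generates the coercive terms $\int|f''(\phi^S_\X)\p_x\phi^S_\X|\langle x\rangle^\beta U^2\,dx$ and $\int\langle x\rangle^\beta|\p_x U|^2\,dx$ (together with higher-order analogues), while the nonlinear flux remainder $f(\usharp+\p_x U)-f(\usharp)-f'(\phi^S_\X)\p_x U$ is a good cubic absorbed by the smallness of $N(T)$. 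The decisive new ingredient is the control of $\X'(\ubl-\phi^S_\X)$ and of $H_1,H_2$: since $\p_x\sigma_\X$ and its derivatives are exponentially concentrated near the shock location $x=st+\X(t)$, which drifts to $-\infty$ at speed $|s|>0$, while Lemma \ref{lem-per-t1} supplies $|\ur-\ubr|\lesssim \nu\,e^{\theta_b x}$, a direct computation yields
\begin{equation*}
\norm{H_1}_{H^2_{\beta+1}}+\norm{H_2}_{H^3_\beta}\le C\nu\,e^{-\alpha(1+\X_0+t)}
\end{equation*}
for some $\alpha>0$; this cooperation between the retreating shock and the exponentially localized boundary wave is what makes the coupled problem closable.

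For the asymptotic behavior, inserting these bounds into \eqref{eqn-shift1} gives
\begin{equation*}
|\X'(t)|\le C|\p_x^2 U(0,t)| + C\nu\,e^{-\alpha(1+\X_0+t)},
\end{equation*}
and the time-weighted trace term embedded in $N(T)^2$ together with Cauchy--Schwarz implies $\X'\in L^1(0,\infty)$, whence $\X(t)\to\X_\infty:=\X_0+\int_0^\infty \X'(t)\,dt$ as $t\to\infty$. The main obstacle I anticipate is closing the hierarchy $N(T)^2\le C\varepsilon_0^2+C\,N(T)^3$: each integration by parts at $x=0$ produces boundary fluxes involving $\p_x^k U(0,t)$ for $k\ge 1$, and $\p_x^2 U(0,t)$ is precisely what drives the shift \eqref{eqn-shift1}, creating a feedback loop between the interior energy, the boundary trace, and the ODE for $\X$ which must be closed simultaneously. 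Generating the polynomial-in-time trace control $(1+\X_0+t)^{\beta+3}|\p_x^2 U(0,t)|^2$ additionally forces a time-weighted energy identity whose source side demands that the boundary-wave interactions decay faster than any polynomial in $t$, which is exactly what the exponential separation estimate provides. Reconciling these traces with the shift ODE in a way that consistently closes the nonlinear smallness hierarchy is the most technical step.
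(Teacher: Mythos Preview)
Your overall strategy---local existence by a fixed-point argument, global continuation via weighted anti-derivative energy estimates, and closing the feedback loop between the interior estimates, the boundary trace $\p_x^2 U(0,t)$, and the shift ODE---matches the paper's, and your identification of the exponential separation $|\ur-\ubr|\cdot|\p_x\sigma_\X|\lesssim \nu e^{\theta_b x}e^{-\theta_s|x-st-\X|}$ as the mechanism controlling $H_1,h_2$ is exactly right. There is, however, a concrete gap in the multipliers you propose.

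You plan to multiply by $\langle x\rangle^\beta U$ and claim this produces the coercive term $\int |f''(\phi^S_\X)\p_x\phi^S_\X|\,\langle x\rangle^\beta U^2\,dx$. But the viscous theorem does \emph{not} assume convexity of $f$ (only \eqref{E} and non-degeneracy), so after integration by parts the transport term yields $-\tfrac12\int f''(\phi^S_\X)\p_x\phi^S_\X\,\langle x\rangle^\beta U^2$, which need not have a sign. The paper resolves this with the Matsumura--Nishihara weight $w(\phi)=(\phi-\ubl)(\phi-\ubr)/f_0(\phi)$, for which $(wf_0)''\equiv 2$ gives unconditional coercivity (see \eqref{prop-weight}, \eqref{eng-basic-1'}). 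Equally important, the polynomial weight must be taken in the \emph{moving} frame $\langle\xi-\xi_*\rangle^\beta$ with $\xi=x-st-\X$ and $\phi^S(\xi_*)=(\ubl+\ubr)/2$: the cross term then becomes $-\beta\frac{\xi-\xi_*}{\langle\xi-\xi_*\rangle}(wf_0)'(\phi^S_\X)=-\beta\frac{\xi-\xi_*}{\langle\xi-\xi_*\rangle}(2\phi^S_\X-\ubl-\ubr)$, which is nonnegative by the choice of $\xi_*$ and yields the key lower bound $A_\beta\ge c$ in \eqref{eng-1-1'}. With the fixed-frame weight $\langle x\rangle^\beta$ you would instead pick up $-\tfrac{\beta}{2}\int f'(\phi^S_\X)\langle x\rangle^{\beta-2}x\,U^2$, and since $f'(\ubr)<s<0$ this has the wrong sign near the boundary and cannot be absorbed.

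Two minor corrections: the exponential part of the bound on $\X'$ comes partly from $f(\ubr)-f(\phi^S_\X)|_{x=0}$, which carries no factor of $\nu$ (cf.\ \eqref{ineq-shift1}); and $H_2$ itself is not estimated directly in a weighted norm---rather the paper combines $-\X'(\ubl-\phi^S_\X)-H_2$ and splits the $x$-integration at $x=st+\X$ and $x=(st+\X)/2$, using the shift equation to rewrite $H_2(0,t)$ (see \eqref{eng-basic-50}--\eqref{eng-basic-53}). Once you insert the weight $w$ and move to the shock frame, the rest of your outline goes through essentially as in the paper.
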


\subsection{A priori estimates}
The global existence theorem will be obtained by combining the local existence and a priori estimates. The local-in-time existence of the classical solution to \eqref{eqn-anti-deriv} and \eqref{eqn-shift1} with constant boundary value has been treated in \cite{Liu.N1997} by iteration arguments. The time-periodic boundary values do not make a difference for the local-in-time existence.
So it suffices to prove the a priori estimates as follows.

 Set the a priori assumption
\begin{equation}
	\label{eqn-assum-shock}
	\begin{aligned}
		\varepsilon:=&\sup\limits_{t\in[0,T]} \Big\{ \norm{U(t)}_{H^3_{\beta}}^2 + (1+|\X_0|+t)^{\beta+3} |\p_x^2 U(0,t)|^2 \\
		&\qquad +\int_{0}^{t} (1+|\X_0|+\tau)^{\beta+3} |\p_x^2 U(0,\tau )|^2  d \tau  \Big\} ,
	\end{aligned}
\end{equation}

\begin{proposition}[A priori estimates]
	\label{prop-apriori-shock}
	Under the assumptions of \cref{thm-anti-deriv}, for any $T>0$, assume that $ (U,\X) \in \mathcal{S}(0,T) \times \mathcal{Z}(0,T)$ is the solution to \eqref{eqn-anti-deriv} and \eqref{eqn-shift1}. Then there exist positive constants $\nu_0, \varepsilon_0$ independent of $T$ such that if $\nu<\nu_0$, and $\varepsilon+|\X_0|^{-1}\leq \varepsilon_0$, 
	then 
	\begin{equation}
		\label{ineq-apriori-shock}
		\begin{aligned}
			\sup_{t\in[0,T]} &\Big\{\norm{U(t)}_{H^3_{\beta}}^2
			 + (1+|\X_0|+t)^{\beta+3} |\p_x^2 U(0,t)|^2 \Big\} + \int_{0}^{T}  \Big\{ \norm{\p_x U(t)}_{H^3_{\beta}}^2\\
			&+ (1+|\X_0|+t)^{\beta+3} |\p_x^2 U(0,t )|^2  \Big\} d t \leq C \Big( \norm{U_0}_{H^3_{\beta}}^2 +|\X_0|^{-1}+ \nu\Big).
		\end{aligned}
	\end{equation}
\end{proposition}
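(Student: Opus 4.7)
The plan is to carry out a weighted $L^2$–energy method on the anti-derivative equation \eqref{eqn-anti-deriv}, in the spirit of the Liu--Nishihara framework \cite{Liu.N1997}, coupled to an ODE analysis of the shift $\X(t)$ via \eqref{eqn-shift1}. The central idea is that the source terms $H_1$ and $H_2$ generated by the time-periodic boundary wave are confined near the shock location $x=st+\X(t)$ (through the factor $\p_x\sigma_{\X}$) and at the same time decay exponentially in $x$ thanks to Lemma \ref{lem-per-t1}; since $s<0$, the shock recedes from the boundary and these quadratic-type interaction terms acquire a time decay of the form $(1+|\X_0|+t)^{-(\beta+3)/2}$ which is integrable. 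The a priori estimate \eqref{ineq-apriori-shock} will then follow from a closed-loop Gronwall argument combining (i) the weighted energy inequality for $U$, (ii) the pointwise boundary estimate for $\p_x^2 U(0,t)$, and (iii) the ODE bound on $\X'(t)$.

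\textbf{Estimates on source terms and the shift.} First I would establish the pointwise decays
\begin{equation*}
|\p_x^k(\ur-\ubr)(x,t)|\le C\nu\, e^{\theta_b x}, \qquad |\p_x^k\sigma_{\X}(x,t)|+|(1-\sigma_{\X})(0,t)|\le C e^{-\theta_s|st+\X|},
\end{equation*}
and then combine them in \eqref{eqn-source-2} to show that $H_1$ and $H_2$ are controlled in $H^k_\beta$ by $C\nu(1+|\X_0|+t)^{-(\beta+3)/2}\,e^{\theta_b x/2}$, which is an $L^2_t H^k_\beta$-function whose norm scales like $\nu$. Plugging these bounds together with the a priori assumption \eqref{eqn-assum-shock} into \eqref{eqn-shift1}, and using that $f(\ubr)-f(\phi^S_{\X})|_{x=0}\sim e^{-\theta_s|st+\X|}$ and $\int_{-\infty}^0\jump{u}\p_x\sigma_{\X}\,dx\to -\jump{u}$, I obtain
\begin{equation*}
|\X'(t)|\le C\bigl(|\p_x^2 U(0,t)| + e^{-\theta_s|st+\X|}+\nu\, e^{-\theta_s|st+\X|/2}\bigr),
\end{equation*}
which is an $L^1(0,\infty)\cap L^2(0,\infty)$ function whose norm is controlled by $\|U_0\|_{H^3_\beta}+|\X_0|^{-1}+\nu$; this yields existence of $\X_\infty$ once global estimates are in hand.

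\textbf{Weighted energy hierarchy.} For the zeroth-order estimate I multiply \eqref{eqn-anti-deriv} by $\langle x\rangle^\beta U$ and integrate over $\R_-$. Integration by parts on the viscous term uses $U(0,t)=0$ and produces $|\p_x U|_\beta^2$ plus lower-order commutator terms from the weight; the convective nonlinearity contributes $\tfrac12\int\p_x(\langle x\rangle^\beta f'(\usharp))U^2\,dx$, whose sign near the shock (where $\p_x\phi^S_{\X}<0$ and $f''>0$) provides the crucial dissipation $c\int|U|^2\p_x\phi^S_{\X}\,dx$ a la Kawashima--Matsumura, and the remaining higher-order term is absorbed via the a priori smallness of $\varepsilon$. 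The shift forcing term $\int\X'(\ubl-\phi^S_{\X})\langle x\rangle^\beta U\,dx$ is estimated as $|\X'|\cdot\|U\|_{L^2}$ with the $|\X'|^2$ piece absorbed into the $\p_x^2 U(0,t)$ dissipation on the right. The source terms $\int(H_1+H_2)\langle x\rangle^\beta U\,dx$ are handled by Cauchy--Schwarz against the decays established above. I then differentiate the equation $k=1,2,3$ times in $x$, multiply by $\langle x\rangle^{\beta+k}\p_x^k U$, and iterate; the boundary contributions at $x=0$ produce exactly the term $(1+|\X_0|+t)^{\beta+3}|\p_x^2 U(0,t)|^2$ that appears in \eqref{ineq-apriori-shock}, which is recovered by multiplying the equation at $x=0$ by an appropriate power-of-$t$ weight before testing.

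\textbf{Main obstacle.} The principal difficulty is closing the coupling between the energy estimate and the shift ODE while simultaneously balancing two competing weights, the spatial weight $\langle x\rangle^{\beta+k}$ and the temporal weight $(1+|\X_0|+t)^{\beta+3}$ on the boundary derivative. The temporal weight is dictated by the algebraic decay of $\phi^S_{\X}-\ubl$ at $x=0$, namely $e^{-\theta_s|st+\X|}\lesssim(1+|\X_0|+t)^{-N}$ for any $N$, and must be pushed through the higher-derivative estimates without losing regularity of the boundary data; the term $f(\usharp+\p_x U)-f(\usharp)$ evaluated at $x=0$ couples $\p_x U(0,t)$ with $u_b(t)-\ub_b$, so one needs $u_b\in H^3_{\mathrm{per}}$ (as assumed) to close the third-derivative estimate. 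The bootstrapping then requires choosing $\nu_0$ and $\varepsilon_0$ small enough that the quadratic remainders, the shift forcing $|\X'|\|U\|$, and the source contributions are all strictly absorbed by the parabolic dissipation $\int\|\p_x U\|_{H^3_\beta}^2\,dt$ and the boundary dissipation, yielding \eqref{ineq-apriori-shock}.
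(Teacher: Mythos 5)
Your proposal captures the high-level skeleton—a weighted Liu--Nishihara energy hierarchy coupled to the shift ODE—but it departs from the paper in one place that actually matters, and the departure creates a gap. The paper's spatial weight in the energy estimates is not $\langle x\rangle^{\beta+k}$ but the \emph{shock-centered} weight $\langle \xi-\xi_*\rangle^{\beta+k}$ with $\xi=x-st-\X$ and a fixed $\xi_*$. This choice is what produces the time-growing boundary factor naturally: when you multiply $\p_x^k$ of \eqref{eqn-anti-deriv1} by $\langle \xi-\xi_*\rangle^{\beta+k}\p_x^k U$, integrate by parts on $\p_x^{k+2}U$, and evaluate the boundary flux at $x=0$, the weight is $\langle -st-\X-\xi_*\rangle^{\beta+k}\sim (1+|\X_0|+t)^{\beta+k}$, which for $k=3$ is exactly the $(1+|\X_0|+t)^{\beta+3}|\p_x^2 U(0,t)|^2$ term in \eqref{ineq-apriori-shock}. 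With $\langle x\rangle^{\beta+k}$ the boundary weight collapses to $\langle 0\rangle^{\beta+k}=1$ and the time-weighted boundary dissipation does not appear. Your remedy—``multiplying the equation at $x=0$ by an appropriate power-of-$t$ weight before testing''—is not a well-posed instruction: the multiplier in the energy method must be a single function of $(x,t)$ on all of $\R_-$, and the paper's insight is precisely that the \emph{same} weight $\langle\xi-\xi_*\rangle^{\beta+k}$ plays both roles (interior dissipation and boundary time growth) because the shock location itself recedes at the rate one needs.

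Two smaller issues. First, your shift estimate implicitly relies on $st+\X(t)\le \X_0$ holding for all $t\in[0,T]$; the paper establishes this first, by a contradiction argument in Lemma \ref{lem-shift1}, before any of the decay rates $e^{\theta_s(st+\X)}\lesssim e^{-c(|\X_0|+t)}$ can be invoked. You use those rates but never close the loop proving the upper bound on $st+\X$. Second, the paper shows the sources decay \emph{exponentially}, $\norm{H_1}_{H^3_\beta}+\norm{h_2}_{H^2_{\beta+1}}\le C\nu e^{-c(|\X_0|+t)}$ (Lemma \ref{lem-source}), by splitting $\R_-$ at $(st+\X)/2$; your polynomial rate $(1+|\X_0|+t)^{-(\beta+3)/2}$ is weaker than what is actually available and, combined with your weight choice, would make absorbing the boundary term borderline. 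The rest—the Kawashima--Matsumura weight $w(\phi^S_\X)$ in the low-order estimates, the treatment of $H_0$, the use of $u_b\in H^3_{\mathrm{per}}$ for the boundary relations $\p_x^2 u(0,t)=u_b'+f'(u_b)\p_x u(0,t)$—is consistent with what the paper does.
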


The proof of Proposition \ref{prop-apriori-shock} consists of the following series of lemmas.

\subsubsection{The shift and source terms}

\begin{lemma}
	\label{lem-shift1}
	Under the assumptions of \cref{prop-apriori-shock}, there exists small positive constant $\nu_1,\varepsilon_1$ independent of $T$ such that, if $\nu<\nu_1$ and $\varepsilon<\varepsilon_1$, then $\text{for~} 0\leq t\leq T,$ we have 
	\begin{equation}\label{ineq-shift1}
		\begin{aligned}
			st+\X(t)&\leq \X_0, \quad |\X(t)-\X_0|\leq C, \\
			|\X'(t)| &\leq C e^{-c(|\X_0|+t)}+C\abs{\p_x^2 U(0,t)},\\
			\int_{0}^{t} (1+|\X_0|+\tau)^{\beta+3} |\X'(\tau)|^2 d\tau &\leq Ce^{-c|\X_0|}\\
			&\quad+C\int_{0}^{t} (1+|\X_0|+\tau)^{\beta+3} \abs{\p_x^2 U(0,t)}^2 d\tau.
		\end{aligned} 
	\end{equation}
\end{lemma}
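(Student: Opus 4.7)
The plan is to read off the three bounds directly from the ODE \eqref{eqn-shift1} after estimating each source term and bounding the coefficient of $\X'(t)$ away from zero, then to close the sign assertion $st+\X(t)\leq \X_0$ by a short bootstrap. The key structural inputs are the exponential localization of $\p_x\sigma_{\X}$ near the shock curve $x=st+\X(t)$ provided by \eqref{ineq-shock1}, the exponential decay of the boundary wave $\ur-\ubr=O(\nu e^{\theta_b x})$ from \eqref{ineq-per-t1}, and the weighted a priori control in \eqref{eqn-assum-shock}.

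First I would analyze the coefficient of $\X'(t)$. Under the bootstrap hypothesis $st+\X(t)\leq \X_0$, one has $-st-\X(t)\geq |\X_0|\gg 1$, hence
\begin{equation*}
\int_{-\infty}^{0}\jump{u}\,\p_x\sigma_{\X}\,dx \;=\; \jump{u}\,\sigma(-st-\X(t)) \;=\; \jump{u}\,\bigl(1+O(e^{-\theta_s|\X_0|})\bigr),
\end{equation*}
so this is uniformly bounded away from zero. Each source term on the right of \eqref{eqn-shift1} is then shown to be exponentially small in $|\X_0|+t$ aside from the $\p_x^2 U(0,t)$ contribution: the shock evaluation obeys $|f(\ubr)-f(\phi^S_{\X})|_{x=0}|\leq C|\phi^S(-st-\X)-\ubr|\leq Ce^{-\theta_s(|\X_0|+|s|t)}$ by \eqref{ineq-shock1}; the $x=0$ boundary contribution is bounded by $|f(u_b)-f(\ubr)|\,|1-\sigma_{\X}(0,t)|+|u_b-\ubr|\,|\p_x\sigma_{\X}(0,t)|\leq C\nu\,e^{-\theta_s(|\X_0|+|s|t)}$; and the integral involving $\ur-\ubr$ is bounded, upon substituting $|\ur-\ubr|+|f(\ur)-f(\ubr)|+|\p_x\ur|\leq C\nu e^{\theta_b x}$ and $|\p_x\sigma_{\X}|\leq Ce^{-\theta_s|x-st-\X|}$, splitting the domain at $x=st+\X$, and doing the two elementary exponential integrals, by $C\nu\,e^{-c(|\X_0|+t)}$. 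Dividing through by the coefficient then yields the pointwise bound $|\X'(t)|\leq Ce^{-c(|\X_0|+t)}+C|\p_x^2U(0,t)|$. The weighted $L^2$ bound follows by squaring this and absorbing the polynomial weight via $(1+|\X_0|+\tau)^{\beta+3}e^{-2c(|\X_0|+\tau)}\leq Ce^{-c(|\X_0|+\tau)}$, so that integration gives $Ce^{-c|\X_0|}$ plus the weighted integral of $|\p_x^2 U(0,\tau)|^2$.

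To close the bootstrap for $st+\X(t)\leq \X_0$, I would integrate the pointwise bound on $\X'$ and use \eqref{eqn-assum-shock} together with Cauchy--Schwarz to obtain $|\X(t)-\X_0|\leq Ce^{-c|\X_0|}+C\sqrt{\varepsilon}(1+|\X_0|)^{-(\beta+1)/2}$; this right-hand side is made less than $|s|t$ for $t\geq t_0$ by taking $|\X_0|$ large and $\varepsilon_1,\nu_1$ small, while for $t<t_0$ the uniform pointwise bound $|\X'|\leq |s|/2$ directly gives $|\X(t)-\X_0|\leq |s|t/2\leq -st$. In either regime $\X(t)-\X_0\leq -st$, i.e.\ $st+\X(t)\leq \X_0$, which together with the uniform smallness of $\int_0^\infty|\X'|\,d\tau$ yields the first two assertions of \eqref{ineq-shift1}.

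The main obstacle I anticipate is the coupling integral in the third source-term estimate, because $\ur-\ubr$ and $\p_x\sigma_{\X}$ decay exponentially at different rates ($\theta_b$ versus $\theta_s$) and are localized in different spatial regions (the boundary versus the shock curve). The split-and-integrate computation must be executed so that the resulting rate $c$ and implicit constant depend only on $\min(\theta_b,\theta_s)|s|$, with the borderline case $\theta_b=\theta_s$ handled by absorbing the resulting linear factor $|\X_0|+t$ into a slightly smaller exponential rate.
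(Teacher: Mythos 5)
Your overall structure matches the paper's: estimate the coefficient of $\X'$ away from zero under the hypothesis $st+\X\leq\X_0$, bound each source term in \eqref{eqn-shift1}, and then close a bootstrap. The paper establishes $st+\X(t)\leq\X_0$ by a one-sided contradiction argument at the first crossing time (using that the coefficient is negative, the shock term $f(\ubr)-f(\phi^S_{\X})|_{x=0}$ is positive, and the remaining terms are smaller than $f(\ubr)-f(\phi^S(-\X_0))$), whereas you close it by integrating $\X'$ and a threshold-in-$t$ argument; both routes work. Your observation that the $\theta_b$-versus-$\theta_s$ mismatch is handled by splitting the $\ur$-integral at the midpoint $\tfrac{st+\X}{2}$ is exactly the paper's device, and the weighted $L^2$ estimate by squaring the pointwise bound is the same.

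There is, however, a genuine gap in the step where you write $|f(\ubr)-f(\phi^S_{\X})|_{x=0}|\leq Ce^{-\theta_s(|\X_0|+|s|t)}$ and similarly for the other source terms. The hypothesis $st+\X(t)\leq\X_0$ only gives $-st-\X(t)\geq|\X_0|$, hence $e^{\theta_s(st+\X)}\leq e^{-\theta_s|\X_0|}$, with no decay in $t$: the inequality you wrote is equivalent to $\X(t)\leq\X_0$, which is strictly stronger than the bootstrap input (that input permits $\X(t)$ to grow as fast as $|s|t$). Without the $t$-decay, the integrated bound $|\X(t)-\X_0|\leq Ce^{-c|\X_0|}+C\sqrt{\varepsilon}(1+|\X_0|)^{-(\beta+1)/2}$ fails (you would instead get a term growing linearly in $t$), and the bootstrap does not close. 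The missing ingredient is to note that once $st+\X(t)\leq\X_0$ is in force, the source-term smallness gives the differential inequality
\[
\frac{d}{dt}\bigl(st+\X(t)\bigr)\;=\;s+\X'(t)\;\leq\;s+Ce^{-c|\X_0|}+C\sqrt{\varepsilon}\;\leq\;\tfrac{s}{2}<0
\]
for $|\X_0|$ large and $\varepsilon,\nu$ small, which upon integration yields $st+\X(t)\leq -|\X_0|-\tfrac{|s|}{2}t$; this is what converts $e^{c(st+\X)}$ into $e^{-c'(|\X_0|+t)}$ and makes the subsequent integrations converge. This step needs to be inserted explicitly before the pointwise bound on $\X'$ and before the bootstrap-closing computation. (A smaller point: the coefficient is $\int_{-\infty}^{0}(\ur-\ubl)\p_x\sigma_{\X}\,dx$ with $\ur$ genuinely $x$-dependent, so it is not literally $\jump{u}\,\sigma(-st-\X)$; the paper instead sandwiches it between $\tfrac{1}{2}\jump{\ub}\,\sigma(-st-\X)$ and $\tfrac{3}{2}\jump{\ub}\,\sigma(-st-\X)$ using $|\ur-\ubr|\leq C\nu$, which is what you should do as well.)
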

\begin{proof} 
 (1) We first concern the upper bound of the shift $st+\X(t)$.
	It follows from estimates of the time-periodic solution $\ur$ in  \eqref{ineq-per-t1} for $m=3$
	, shock profile $\phi^S$ \eqref{ineq-shock1} and the a priori assumption \eqref{eqn-assum-shock} that if $\nu<\nu_b$ is small enough, then 
	\begin{align*}
		&\int_{-\infty}^{0} \jump{u} \p_x \sigma_{\X} dx  \leq  \int_{-\infty}^{0} \frac{1}{2} \jump{\ub} \p_x \sigma_{\X}  dx = \frac{1}{2} \big(\phi^S(-st-\X(t))-\ubl\big)<0, \\
		& \abs{-\p_x^2 U(0,t)}  
		 +\abs{\int_{-\infty}^0 \{ - (\ur - \ubr) s+ f(\ur)-f(\ubr)-\p_x \ur \} \p_x\sigma_{\X} dx}\\
		& \qquad\qquad +\abs{\Big( (f(u_b)-f(\ubr))(1-\sigma_{\X}) - (u_b-\ubr) \p_x\sigma_{\X}  \Big)\big|_{x=0} } \leq \varepsilon+C\nu.
	\end{align*}
In virtue of \eqref{E} and $s<0$, we take $\nu_1, \varepsilon_1$ such that if $\nu<\nu_1$ and $\varepsilon<\varepsilon_1$, then
\begin{align*}
	f(\ubr)-f(\phi^S(-\X_0))>\varepsilon+C\nu,
\end{align*}
and therefore by \eqref{eqn-shift1}, $st+\X(t)\leq \X_0$ for all $t\in[0,T]$.

Otherwise, by contradiction, if $t_0:=\inf\{t\in(0,T]; st+\X(t)> \X_0 \}$ exists, then it follows from the continuity assumption on $\X$ in \cref{prop-apriori-shock}  that $st_0+\X(t_0) = \X_0$, and thus by \eqref{eqn-shift1},
\begin{align*}
	\int_{-\infty}^{0} \jump{u} \p_x \sigma_{\X} dx \Big\vert_{t=t_0} \X'(t_0)
	&> f(\ubr)-f(\phi^S(-\X_0))-\varepsilon-C\nu>0.
\end{align*}
Hence, $\X'(t_0)<0$, and $st+\X(t)<\X_0$ in a small neighbourhood of $t_0$, which contradicts to the definition of $t_0$.

\vspace{0.2cm}

(2) Now we estimate $\X'(t)$ and $\X(t)$. The upper bound of $st+\X(t)$ gives that 
\begin{align*}
	&0<\ubl-\phi^S(-\X_0)<\ubl-\phi^S(-st-\X) \leq \ubl-\ubr,\\
	&f(\ubr)-f(\phi^S(-st-\X)) \sim \abs{\phi^S(-st-\X)- \ubr} \sim e^{\theta_s(st+\X)}.
\end{align*}
As a result, 
\begin{align*}
	&\int_{-\infty}^{0} \jump{u} \p_x \sigma_{\X} dx  \leq  \frac{1}{2} \big(\phi^S(-\X_0)-\ubl\big)<0,\\
	&\abs{\int_{-\infty}^0 \{ -(\ur-\ubr) s+ f(\ur)-f(\ubr)-\p_x\ur \} \p_x\sigma_{\X} dx}\\
	&\quad \leq C\int_{-\infty}^0 \nu e^{\theta_b x} e^{-\theta_s |x-st-\X| } dx\\
	&\quad \leq C \nu \int_{-\infty}^{\frac{st+\X}{2}} e^{\theta_b x} dx+ C\nu\int_{\frac{st+\X}{2}}^0 e^{-\theta_s |x-st-\X| } dx\\
	&\quad \leq C \nu e^{\min\{\theta_b,\theta_s\} (st+\X)/2},\\
	&\abs{\big( (f(u_b)-f(\ubr))(1-\sigma_{\X}) - (u_b-\ubr) \p_x\sigma_{\X}  \big)\big|_{x=0} } \leq C \nu e^{\theta_s(st+\X)}.
\end{align*}
By the equation of $\X'(t)$, \eqref{eqn-shift1}, it yields that
\begin{equation} \label{ineq-shift11}
   c e^{c(st+\X)} -C|\p_x^2 U(0,t)|	\leq \X'(t)\leq C e^{c(st+\X)} +C|\p_x^2 U(0,t)|.
\end{equation}
Therefore, 
$\X(t)-\X_0 \geq -C\int_{0}^{t} |\p_x^2 U(0,t)| d\tau \geq -C \varepsilon$,
, and \subeqref{ineq-shift1}{2}, \subeqref{ineq-shift1}{1} and \subeqref{ineq-shift1}{3} can be successively obtained. 
\end{proof}

Thanks to the exponentially decaying of the shock profile \eqref{ineq-shock1} and the periodic solution \eqref{ineq-per-t1}, the quadratic source terms $H_1$ and $h_2$ are well decaying both in $t$ and $x$: 
\begin{lemma}
	\label{lem-source}
	Under the assumptions of \cref{prop-apriori-shock}, $H_1$ and $h_2$ satisfy that
	\begin{equation}\label{ineq-H}
		\norm{ H_1 }_{H^3} +\norm{ h_2 }_{H^2} + \norm{ H_1 }_{H_{\beta}^3} + \norm{ h_2 }_{H_{\beta+1}^2} \leq  C\nu e^{-c(|\X_0|+t)}.
	\end{equation}
\end{lemma}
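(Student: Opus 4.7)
The plan is to exploit the fact that every term in $H_1$ and $h_2$ decomposes as a product of a boundary-wave factor and a shock-localized factor. Writing $\psi := \ur - \ubr$, Lemma~\ref{lem-per-t1} applied with $m = 3$ (valid for $\nu < \nu_b$) gives that $\p_t^i \p_x^j \psi$ with $2i + j \leq 3$ are all bounded by $C\nu e^{\theta_b x}$, so they are concentrated near the boundary and decay exponentially as $x \to -\infty$. By Lemma~\ref{lem-shock-prop}, $\p_x^k \sigma_{\X}$ and $\p_x^k(\phi^S_{\X} - \ub_\pm)$ decay like $e^{-\theta_s|x - st - \X|}$ away from the moving front $x = st + \X(t)$. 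By Lemma~\ref{lem-shift1}, the front satisfies $|st + \X(t)| \geq |\X_0| + |s|t - C$, so the supports of the two factors are separated by a distance $\asymp |\X_0| + t$, which supplies the $e^{-c(|\X_0|+t)}$ factor in the final bound.

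The first step is a careful rewriting of $H_1$. Using the integral form of the remainder, $f(\usharp) - f(\phi^S_{\X}) = \psi \sigma_{\X} \int_0^1 f'(\phi^S_{\X} + \theta \psi \sigma_{\X})\, d\theta$ and $f(\ur) - f(\ubr) = \psi \int_0^1 f'(\ubr + \theta \psi)\, d\theta$. Subtracting and applying the mean value theorem to the difference of $f'$'s, one rewrites
\[
H_1 = \psi \cdot \Bigl\{ \int_0^1 f''(\widetilde{\phi}_\theta)\bigl[ \sigma_{\X}(\phi^S_{\X} - \ubr) + \theta \psi \sigma_{\X}(\sigma_{\X} - 1) \bigr]\, d\theta - \p_x \sigma_{\X} \Bigr\}.
\]
Each of the building blocks $\sigma_{\X}(\phi^S_{\X} - \ubr) = (\phi^S_{\X} - \ubl)(\phi^S_{\X} - \ubr)/(\ubr - \ubl)$, $\sigma_{\X}(\sigma_{\X} - 1)$, and $\p_x \sigma_{\X}$ decays like $e^{-\theta_s|x - st - \X|}$ uniformly in $x \in \R_-$, since in each product one of the factors decays at each end of the $\xi$-axis. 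This gives the pointwise bound $|H_1(x,t)| \leq C\nu e^{\theta_b x}e^{-\theta_s|x - st - \X(t)|}$. For $h_2$, using $|\X'| \leq C$ from Lemma~\ref{lem-shift1} together with \eqref{eqn-assum-shock}, the brace is bounded by $|\psi| + |\p_x \psi| \leq C\nu e^{\theta_b x}$, while $|\p_x \sigma_{\X}|$ supplies the shock-localized decay, yielding the same pointwise bound.

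The second step is an elementary weighted integral estimate. For $\alpha \geq 0$,
\[
\int_{-\infty}^0 (1 + x^2)^\alpha e^{2\theta_b x} e^{-2\theta_s|x - st - \X|}\, dx \leq C_\alpha e^{-c(|\X_0| + t)},
\]
proved by splitting at $x_* = (st + \X)/2$: on $(-\infty, x_*)$ bound $e^{2\theta_b x} \leq e^{-\theta_b|st+\X|}$, while on $(x_*, 0)$ bound $e^{-2\theta_s|x - st - \X|} \leq e^{-\theta_s|st + \X|}$; in both subintervals the polynomial weight is absorbed, and Lemma~\ref{lem-shift1} converts $|st + \X|$ into $|\X_0| + |s|t - C$. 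For higher derivatives, differentiating the expression for $H_1$ (and $h_2$) via Leibniz produces the same structure: each summand is a $\p^{\leq 3}\psi$-type factor times a shock-localized derivative, and $\p_t$ derivatives of $\sigma_{\X}, \phi^S_{\X}$ become $-(s + \X')\p_x$-derivatives, preserving the product form and decay. Summing these bounds gives the four norms in \eqref{ineq-H}.

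The main obstacle is the rewriting of $H_1$ in the first step. A naive estimate $|f(\usharp) - f(\phi^S_{\X})| \leq C|\psi|\sigma_{\X}$ combined with $|(f(\ur) - f(\ubr))\sigma_{\X}| \leq C|\psi|\sigma_{\X}$ only yields $|H_1| \leq C\nu e^{\theta_b x}$, which is $\mathcal{O}(\nu)$ near $x = 0$ and carries no decay in $|\X_0| + t$ because $\sigma_{\X}(0,t) \to 1$. The exact cancellation of the two linear principal parts is what manufactures the shock-localized factor $\phi^S_{\X} - \ubr$ (and the subleading $\sigma_{\X} - 1$) inside the brace; without this cancellation, the $e^{-c(|\X_0|+t)}$ factor cannot be extracted, which would in turn break the anti-derivative energy estimates that rely on the source terms being summable in time.
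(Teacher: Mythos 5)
Your proposal is correct and follows essentially the same route as the paper: both extract the cancellation of the leading linear-in-$(\ur-\ubr)$ parts via a Taylor/mean-value rewriting (yours writes the brace with $\sigma_{\X}(\phi^S_{\X}-\ubr)$ and $\sigma_{\X}(\sigma_{\X}-1)$; the paper equivalently factors out $(1-\sigma_{\X})$ and arrives at $|H_1|\leq C|\ur-\ubr|((1-\sigma_{\X})\sigma_{\X}+|\p_x\sigma_{\X}|)$), and both then split the weighted integral at $(st+\X)/2$ to combine the boundary-decay $e^{\theta_b x}$ with the shock-localized decay $e^{-\theta_s|x-st-\X|}$ into the factor $e^{-c(|\X_0|+t)}$.
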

The proof is based on Lemmas \ref{lem-per-t1} and \ref{lem-shift1}.
\begin{proof}
	For $H_1$, it follows from \eqref{def-ansatz-shock} and \eqref{eqn-source-2} that 
	\begin{align*}
		H_1=&\int_{0}^{1} f'(\phi^S_\X+\theta_1(\ur-\ubr) \sigma_{\X}) d\theta_1  (\ur-\ubr) \sigma_{\X} \\
		&\quad + \int_{0}^{1} f'(\ubr+\theta_1(\ur-\ubr) ) d\theta_1  (\ur-\ubr) \sigma_{\X}  - (\ur - \ubr) \p_x \sigma_{\X},\\
		=& -\int_{0}^{1}\int_{0}^{1}  f''(\ubr+\theta_1(\ur-\ubr) - \theta_2 (\ubr-\ubl+\theta_1(\ur-\ubr) )(1-\sigma_{\X}) )\\
		&\quad \cdot (\ubr-\ubl+\theta_1(\ur-\ubr) )(1-\sigma_{\X}) d\theta_2 d\theta_1 \cdot (\ur-\ubr)\sigma_{\X} \\
		&\quad   - (\ur - \ubr) \p_x \sigma_{\X} \, ,\\
	   |H_1|
		\leq  & C \abs{\ur-\ubr}((1-\sigma_{\X})\sigma_{\X}  + \abs{\p_x\sigma_{\X}}).
	\end{align*}
	It follows from the decaying properties of the shock profile \eqref{ineq-shock1}, time-periodic solution \eqref{ineq-per-t1} for $m=3$, and the shift \eqref{ineq-shift1} that for $k=0,1,2,3$, 
	\begin{align}
		\norm{\p_x^k H_1}^2\leq& C  \int_{-\infty}^0 \nu e^{2\theta_b x} e^{-2\theta_s|x-st-\X|} dx     \notag\\
		\leq & C\nu \big(e^{\theta_b (st+\X) }\int_{-\infty}^{\frac{st+\X}{2}}  e^{-2\theta_s|x-st-\X|} dx
		      +e^{\theta_s(st+\X)}\int_{\frac{st+\X}{2}} ^0 e^{2\theta_b x}  dx \big)    \notag\\
		\leq &  C\nu e^{\min\{\theta_b,\theta_s\} (st+\X)} \leq C\nu e^{-c(|\X_0|+t)} . \label{ineq-H-1}
	\end{align}
   
   It follows from \eqref{eqn-assum-shock} and \eqref{ineq-shift1} that $|\X'|\leq C$, and then the estimates of $h_2$ can be also obtained by separating the integrated interval into $(-\infty,(st+\X)/2]$ and $ [(st+\X)/2,0]$: for $k=0,1,2$,
   \begin{align}
   	\norm{\p_x^k h_2}^2\leq&   
    C\nu \int_{-\infty}^0 e^{2\theta_b x} e^{-2\theta_s|x-st-\X|} dx \leq C\nu e^{-c(|\X_0|+t)}.     \label{ineq-H-2}
   \end{align}
   The estimates of $\abs{ \p_x^k H_1 }_{\beta+k}$ and $\abs{ \p_x^k h_2 }_{\beta+k+1}$ are similar.
   So the proof of lemma is completed.

\end{proof}

\subsubsection{Basic a priori estimate.}

Linearizing \eqref{eqn-anti-deriv} with respect to $\usharp$ gives that 
\begin{equation}
		\p_t U+ f'(\phi^S_\X) \p_x U-\p_x^2 U=(f'(\phi^S_\X)-f'(\usharp)) \p_x U-\X'(\ubl-\phi^S_{\X})+ H_0-H_1-H_2\,, 
		\label{eqn-anti-deriv1}
\end{equation}
where \begin{equation}
	H_0=-f(\usharp+\p_x U) + f(\usharp)+ f'(\usharp)\p_x U =\mathcal{O} (|\p_x U|^2). \label{def-H0}
\end{equation}

Compared with constant boundary effect in \cite{Liu.N1997}, the new difficulties to perform energy estimates on \eqref{eqn-anti-deriv} lay in the new terms, $(f'(\phi^S_{\X}) -f'(\usharp)) \p_x U, H_1, H_2$, which are generated by the time-periodic boundary.
To make this proof complete and clear, we still present the details of all the estimates here.

Thanks to our careful designed ansatz, these new terms 
are quadratic, which consists of time-periodic parts (decaying away from the boundary) and shock profile parts (decaying away from the shift $st+\X$). So these new terms are considered in separated integrated intervals $(-\infty,st+\X), (st+\X, (st+\X)/2)$ and $((st+\X)/2,0]$.

\begin{lemma}
	\label{lem-eng-basic}
	Under the assumptions of \cref{prop-apriori-shock}, there exists small positive constant $\nu_2,\varepsilon_2$ independent of $T$ such that, if $\nu<\nu_2$ and $\varepsilon+|\X_0|^{-1}<\varepsilon_2$, then we have
	\begin{equation}\label{eng-basic}
		\begin{aligned}
			\sup_{t\in[0,T]} 
			&   \norm{U(t)}^2  +\int_{0}^{T}\big(\|\sqrt{-\p_x\phi^S_{\X}} U \| ^2+ \norm{\p_x U}^2\big)   d t \\
			&\leq C (\norm{U_0}^2 + |\X_0|^{-1})  + C \X_0^{-\beta} \int_{0}^{T} (1+|\X_0|+t)^{\beta+3} |\p_x^2 U(0,t )|^2  d t + C\nu.
		\end{aligned} 
	\end{equation}
\end{lemma}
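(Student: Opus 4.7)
The plan is the standard weighted $L^2$ energy method applied to the linearized anti-derivative equation \eqref{eqn-anti-deriv1}: multiply by $U$, integrate over $\R_-$, integrate by parts, and bound each right-hand contribution using Lemmas \ref{lem-shift1} and \ref{lem-source}. The boundary conditions $U(0,t)=U(-\infty,t)=0$ kill the two pointwise boundary terms arising from integration by parts, so the left-hand side reduces to
\begin{equation*}
\frac{1}{2}\dt\norm{U}^2 \;-\; \frac{1}{2}\int_{-\infty}^0 f''(\phi^S_\X)\,\p_x\phi^S_\X\,U^2\,dx \;+\; \norm{\p_x U}^2.
\end{equation*}

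First I would extract dissipation from the first-order coefficient. Using the profile ODE \eqref{eqn-shock1}, the Oleinik condition \eqref{E}, and the non-degeneracy \eqref{non-degenerate-shock} (so that $f'(\phi^S)-s$ strictly changes sign between $\ubl$ and $\ubr$), one shows $-f''(\phi^S_\X)\p_x\phi^S_\X \geq c(-\p_x\phi^S_\X) \geq 0$; this produces precisely the weighted norm $\norm{\sqrt{-\p_x\phi^S_\X}\,U}^2$ appearing on the left side of \eqref{eng-basic}. This step is classical and essentially due to Goodman and Matsumura--Nishihara.

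Next I would estimate the right-hand side term by term. The term $(f'(\phi^S_\X)-f'(\usharp))\p_x U \cdot U$ equals, up to $O((\ur-\ubr)^2)$, a factor $-f''(\cdot)(\ur-\ubr)\sigma_\X$ multiplied by $U\p_x U$; by \eqref{ineq-per-t1} and \eqref{ineq-shock1}, splitting the integration at $x=(st+\X)/2$ as in the proof of Lemma \ref{lem-source}, this is bounded by $C\nu\,e^{-c(|\X_0|+t)}(\norm{\p_x U}^2+\norm{U}^2)$, which is absorbable and time-integrable. The quadratic term $H_0 U = O(|\p_x U|^2)U$ is handled via $\norm{U}_{L^\infty}\norm{\p_x U}^2 \leq C\varepsilon^{1/2}\norm{\p_x U}^2$ and absorbed. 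The source terms $H_1,H_2$ are controlled by Lemma \ref{lem-source} (noting $\norm{H_2}$ inherits the exponential decay of $h_2$), yielding a contribution $\leq C\nu\,e^{-c(|\X_0|+t)}\norm{U}$ which is $L^1_t$-small in $\nu$.

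The delicate term, and the main obstacle, is the shift contribution $\int_{-\infty}^0 U\cdot \X'(\ubl-\phi^S_\X)\,dx$, since $\ubl-\phi^S_\X$ is \emph{not} uniformly $L^2(\R_-)$: its $L^1$-norm grows linearly in $|st+\X|\sim(|\X_0|+t)$. I plan to bound this by $|\X'|\,\norm{U}_{L^\infty}\int_{-\infty}^0|\ubl-\phi^S_\X|\,dx \leq C|\X'|(1+|\X_0|+t)\,\norm{U}^{1/2}\norm{\p_x U}^{1/2}$, then invoke the decomposition $|\X'|\leq Ce^{-c(|\X_0|+t)}+C\abs{\p_x^2 U(0,t)}$ from Lemma \ref{lem-shift1}. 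After Young's inequality, the first piece integrates in time to $Ce^{-c|\X_0|}$. The second piece produces $\int_0^T(1+|\X_0|+t)^2\abs{\p_x^2 U(0,t)}^2\,dt$, which is bounded by $C|\X_0|^{-(\beta+1)}\int_0^T(1+|\X_0|+t)^{\beta+3}\abs{\p_x^2 U(0,t)}^2\,dt$ since $(1+|\X_0|+t)^2 \leq |\X_0|^{-(\beta+1)}(1+|\X_0|+t)^{\beta+3}$; this is precisely the $|\X_0|^{-\beta}$-weighted boundary term tolerated on the right of \eqref{eng-basic}. Finally, integrating in time from $0$ to $T$ and choosing $\nu_2,\varepsilon_2$ and $|\X_0|^{-1}$ small enough to absorb all $\norm{U}^2,\norm{\p_x U}^2,\norm{\sqrt{-\p_x\phi^S_\X}\,U}^2$ perturbations into the good left-hand side yields \eqref{eng-basic}.
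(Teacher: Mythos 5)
The proposal is structurally close to the paper's proof but contains two genuine gaps.

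\textbf{First, the dissipation argument requires convexity the paper does not assume.} You multiply \eqref{eqn-anti-deriv1} by $U$ alone and claim that, using \eqref{eqn-shock1}, \eqref{E} and \eqref{non-degenerate-shock}, one shows $-f''(\phi^S_{\X})\p_x\phi^S_{\X}\geq c(-\p_x\phi^S_{\X})$. Dividing by $-\p_x\phi^S_{\X}>0$, this is the assertion $f''\geq c>0$ on the range of $\phi^S$, i.e.\ strict convexity of $f$ there. But the viscous theorem and \cref{prop-apriori-shock} assume only the Oleinik condition \eqref{E} and non-degeneracy, and the paper explicitly remarks that $f$ need not be convex. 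For a non-convex flux (e.g.\ with an inflection point inside $(\ubr,\ubl)$), $f''$ changes sign on the shock range and your dissipation term has no definite sign. This is exactly why the paper multiplies instead by $w(\phi^S_{\X})U$ with $w(\phi)=(\phi-\ubl)(\phi-\ubr)/f_0(\phi)$: one then gets $(wf_0)''\equiv 2>0$ so that $\tfrac12[(wf_0)''(\phi^S_{\X})-\X'w'(\phi^S_{\X})](-\p_x\phi^S_{\X})\geq (1-C\varepsilon)(-\p_x\phi^S_{\X})$, yielding the weighted dissipation $\|\sqrt{-\p_x\phi^S_{\X}}\,U\|^2$ unconditionally. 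Your claim that the unweighted argument is ``classical and essentially due to Goodman and Matsumura--Nishihara'' gets the history backwards: the weight $w$ is precisely the classical device introduced for the non-convex case.

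\textbf{Second, the Young-inequality step for the shift term does not close.} You bound $\int_{-\infty}^0|\X'||\ubl-\phi^S_{\X}||U|\,dx$ by $C|\X'|(1+|\X_0|+t)\norm{U}^{1/2}\norm{\p_x U}^{1/2}$ and then apply Young's inequality to ``produce'' $\int_0^T(1+|\X_0|+t)^2|\p_x^2 U(0,t)|^2\,dt$. But Young applied to $a\,\norm{U}^{1/2}\norm{\p_x U}^{1/2}$ inevitably leaves a remainder proportional either to $\norm{U}^2$ (with a small but fixed coefficient) or to a fractional power $\norm{U}^{2/3}$. The left-hand side of \eqref{eng-basic} does not control $\int_0^T\norm{U(t)}^2\,dt$ --- it controls only $\sup_t\norm{U(t)}^2$ and the weighted integral $\int\|\sqrt{-\p_x\phi^S_{\X}}\,U\|^2\,dt$ --- so such a remnant cannot be absorbed, and a Gronwall argument on it would produce constants growing with $T$. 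The paper avoids this by splitting at $x=st+\X$: on $(-\infty,st+\X)$ one has $|\ubl-\phi^S_{\X}|\lesssim|\p_x\phi^S_{\X}|$ and Cauchy--Schwarz produces the absorbable weighted norm; on $(st+\X,0)$ one uses $|U(x,t)|\leq|x|^{1/2}\norm{\p_x U}$ (valid because $U(0,t)=0$, and sharper than the Gagliardo--Nirenberg bound you use), giving $(1+|\X_0|+t)^{3/2}\norm{\p_x U}$, whence Young yields $\epsilon\norm{\p_x U}^2$ plus $C(1+|\X_0|+t)^3|\X'|^2$ with no stray $\norm{U}$ factor. A smaller inaccuracy: the bound you state for $(f'(\phi^S_{\X})-f'(\usharp))\p_x U\cdot U$ cannot have the factor $e^{-c(|\X_0|+t)}$, since $\sigma_{\X}$ is $O(1)$ near the boundary; the correct bound, obtained via Hardy's inequality as in the paper, is $C\nu\norm{\p_x U}^2$, which is still absorbable by smallness of $\nu$ but without temporal decay.
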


\begin{proof}
    As \cite{Liu.N1997}, we define the weight function 
	\begin{equation*}
		\label{def-weight1}
		w(\phi)=\frac{(\phi-\ubl)(\phi-\ubr)}{f_0(\phi)},
	\end{equation*}
    which satisfies that
    \begin{equation}
    	\label{prop-weight}
    	\begin{aligned}
    		&(w f_0)'(\phi)=2\phi-(\ubr+\ubl),\quad (w f_0)''(\phi)=2>0,\\
    		&\qquad
    		c\leq w(\phi)\leq C, \quad \abs{w'(\phi)}\leq C.
    	\end{aligned}
    \end{equation}

   By multiplying \eqref{eqn-anti-deriv1} with $w(\phi^S_{\X}) U$, integrating the resultant on $(-\infty,0)$, using integration by parts and noting that $U(0,t)=0$, we have
   \begin{align}
     &\dt \int_{-\infty}^{0} \frac{1}{2} w(\phi^S_{\X}) U^2 dx +  \int_{-\infty}^{0} \{\frac{1}{2} [(w f_0)''(\phi^S_{\X}) - \X' w'(\phi^S_{\X})] (-\p_x \phi^S_{\X}) U^2 +w(\phi^S_{\X})|\p_x U|^2\} dx \notag \\
     &\ = \int_{-\infty}^{0}  w(\phi^S_{\X}) \{ (f'(\phi^S_{\X}) -f'(\usharp)) \p_x U- \X' (\ubl-\phi^S_{\X}) + H_0-H_1-H_2 \} U  dx, \label{eng-basic-1}
   \end{align}
  where in virtue of \eqref{prop-weight} and \eqref{eqn-assum-shock},
  \begin{equation}
  	[(w f_0)''(\phi^S_{\X}) - \X' w'(\phi^S_{\X})] \geq 2-C\X' \geq 2-C\varepsilon>1 \qquad\text{for small~}\varepsilon. \label{eng-basic-1'}
  \end{equation}
  It remains to estimate the right hand terms of \eqref{eng-basic-1}.

   It follows from $f\in C^4$, the definition of the ansatz \eqref{def-ansatz-shock}, the H\"{o}lder inequality and Hardy inequality that
   \begin{align}
     &\abs{\int_{-\infty}^{0} w(\phi^S_{\X}) (f'(\phi^S_{\X}) -f'(\usharp)) \p_x U U  dx} \leq C \int_{-\infty}^{0} \abs{\ur-\ubr} \abs{\sigma_{\X}} |\p_x U| |U| dx \notag\\
     &\qquad\qquad
     \leq C \norm{x(\ur-\ubr)}_{L^{\infty}} \norm{\sigma_{\X}}_{L^{\infty}} \norm{x^{-1}U} \norm{\p_x U} 
     \leq  C\nu \norm{\p_x U}^2.      \label{eng-basic-2}
   \end{align}
  Moreover, with \eqref{eqn-assum-shock}, \eqref{def-H0}, \eqref{ineq-H} and the Young inequality, it yields that
   \begin{align}
   	&\abs{\int_{-\infty}^{0} w(\phi^S_{\X}) H_0 U  dx} \leq C \norm{U}_{L^\infty} \norm{\p_x U}^2 \leq  C\varepsilon \norm{\p_x U}^2, \label{eng-basic-3} \\
   	&\abs{\int_{-\infty}^{0} w(\phi^S_{\X}) H_1 U  dx} \leq C \norm{H_1} \norm{U} 
   	\leq C\nu e^{-c(|\X_0|+t)} (1+\norm{U}^2). \label{eng-basic-4}
   \end{align}
   The rest terms in the right hand side of \eqref{eng-basic} are 
   \begin{align}
   	&\abs{\int_{-\infty}^{0}  w(\phi^S_{\X})  (-\X' (\ubl-\phi^S_{\X}) -H_2 ) U  dx} \notag\\
   	&\qquad\leq C \int_{-\infty}^{0} (| \X' | |\ubl-\phi^S_{\X}| +  |H_2|) |U|  dx 
   	=C \left(\int_{-\infty}^{st+\X} +  \int_{st+\X}^0\right) \,. \label{eng-basic-50}
   \end{align}
   (i) For $x\leq st+\X$, noting that $\p_x \sigma>0$ for $\xi\in \R$ and 
   $|\ubl-\phi^S(\xi)|\leq C|\p_x \phi^S(\xi)|$ for $\xi\leq 0$, 
   it yields that 
   \begin{align*}
   	\abs{H_2(x,t)}
   	&\leq C   \nu e^{\theta_b x}  \left(\int_{-\infty}^{x} \p_x \sigma_{\X} dy \right) \leq C  \nu e^{\theta_b (st+\X)/2}  |\sigma_{\X}| \leq C  \nu e^{-c(|\X_0|+t)} |\p_x \phi^S_{\X}|  ,
   \end{align*}
   and thus 
   \begin{align}
   	&\int_{-\infty}^{st+\X} (| \X' | |\ubl-\phi^S_{\X}| +  |H_2|) |U|  dx 
   	\leq 
   	C (| \X' |+\nu e^{-c(|\X_0|+t)})\int_{-\infty}^{st+\X} |\p_x \phi^S_{\X}| |U|  dx 
   	.\label{eng-basic-51}
   \end{align}
   (ii) For $st+\X<x\leq 0$, it follows from \eqref{def-shift1} that, similar as the estimates of $\X'(t)$ in Lemma \ref{lem-shift1}, 
   \begin{align*}
   	-H_2(0,t)
   	&=\X' (\ubl-\phi^S_{\X})+ \Big( f(\ubr)-f(\phi^S_{\X})\big|_{x=0}-\p_x^2 U(0,t) \Big)   \\
   	& \qquad +\Big( (f(u_b)-f(\ubr))(1-\sigma_{\X}) - (u_b-\ubr) \p_x\sigma_{\X}  \Big)\big|_{x=0} 	\\
   	&= \mathcal{O} ( |\X'| +e^{-c(|\X_0|+t)}+\abs{\p_x^2 U(0,t)}  )\,,   	
   \end{align*}
   Making use of $|\X(t)-\X_0|\leq C$ in \eqref{ineq-shift1}, we have $|st+\X(t)|\leq C(1+|\X_0|+t)$, and noting that $U(0,t)=0$, it yields that
    \begin{align}
   	  &\int_{st+\X}^{0} (| \X'(t)| |\ubl-\phi^S_{\X}(x,t)|+|H_2(0,t)|)  |U(x,t)|  dx \notag\\
      \leq & C( |\X'| +e^{-c(|\X_0|+t)}+\abs{\p_x^2 U(0,t)} )\int_{st+\X}^0 \int_{x}^0 |\p_x U(y,t)| dy  dx  \notag\\
      \leq & C( |\X'| +e^{-c(|\X_0|+t)}+\abs{\p_x^2 U(0,t)} )(1+|\X_0|+t)^{3/2} \norm{\p_x U} .  \label{eng-basic-52}
   \end{align}
   At the same time, if $st+\X<x\leq \frac{st+\X}{2}$, by separating the integrated interval into $[st+\X,(st+\X)/2]$ and $ [(st+\X)/2,0]$ as the estimates of $H_1$ in \cref{lem-source}, we have 
   \begin{align*}
   	|H_2(x,t)- H_2(0,t)|
   	&\leq  \int_{x}^{\frac{st+\X}{2}} \abs{h_2(y,t) }dy + \int_{\frac{st+\X}{2}}^0 \abs{h_2(y,t) }dy\\
   	& \leq C\nu e^{\theta_b(st+\X)/2} + C \nu [\phi^S_{\X}(\frac{st+\X}{2},t)-\ubr] \leq C\nu e^{c(st+\X)},
   \end{align*}
   and if $\frac{st+\X}{2}<x\leq 0$, we have
   \begin{align*}
   	|H_2(x,t)- H_2(0,t)| \leq \int_{x}^0 \abs{h_2(y,t) }dy \leq C\nu (\phi^S_{\X}-\ubr) \leq C\nu |\p_x\phi^S_{\X}|. 
   \end{align*}
Therefore, with the Cauchy-Schwartz inequality, one has 
    \begin{align}
   	&\int_{st+\X}^0 |H_2(x,t)-H_2(0,t)| |U(x,t)|  dx \notag\\
   \leq & C \int_{st+\X}^{\frac{st+\X}{2}}\nu  e^{c(st+\X)}  |U| dx 
       + C\int_{\frac{st+\X}{2}}^0 \nu |\p_x\phi^S_{\X}| |U| dx \notag\\
   \leq & C\nu (st+\X)^{\frac{1}{2}}e^{c(st+\X)}\norm{U} 
   + C\nu\left(\int_{\frac{st+\X}{2}}^0 |\p_x\phi^S_{\X}|  dx\right)^{\frac{1}{2}} \|\sqrt{-\p_x\phi^S_{\X}} U \| \notag\\
   \leq & C\nu e^{-c(|\X_0|+t)} \norm{U}  \,.
   	\label{eng-basic-53}
   \end{align}
  In virtue of \eqref{eng-basic-50}, \eqref{eng-basic-51},  \eqref{eng-basic-52}, \eqref{eng-basic-53}, Lemma \ref{lem-shift1}, the H\"{o}lder inequality and Young inequality, one has
  \begin{align}
  	&\abs{\int_{0}^{T}\int_{-\infty}^{0}  w(\phi^S_{\X}) (  -\X' (\ubl-\phi^S_{\X}) -H_2) U  dx dt} \notag\\
  	\leq & \frac{1}{2} \int_{0}^{T} (\|\sqrt{-\p_x\phi^S_{\X}} U \|^2+ \norm{\p_x U}^2 )  d t +\nu e^{-c|\X_0|} \sup_{t\in[0,T]} 
  	 \norm{U(t)}^2 \notag\\
  	&\qquad 
  	+ C \int_{0}^{T} (1+|\X_0|+t)^3 (|\X'|^2+e^{-2c(|\X_0|+t)}+\abs{\p_x^2 U(0,t)}^2)  dt + C \nu e^{-c|\X_0|}\notag\\
  	\leq & \frac{1}{2} \int_{0}^{T} (\|\sqrt{-\p_x\phi^S_{\X}} U \|^2+ \norm{\p_x U}^2 )  d t +\nu e^{-c|\X_0|} \sup_{t\in[0,T]} 
  	\norm{U(t)}^2 \notag\\
  	&\qquad 
  	+ C \Big(e^{-c|\X_0|}+\X_0^{-\beta}\int_{0}^{T} (1+|\X_0|+t)^{\beta+3} \abs{\p_x^2 U(0,t)}^2  dt \Big) \, .
  	\label{eng-basic-5}
  \end{align} 
  
   Therefore, combining \eqref{eng-basic-1'}--\eqref{eng-basic-4} and \eqref{eng-basic-5}, and integrating \eqref{eng-basic-1} in time $[0,t]$, it yields that 
   if $\nu, \varepsilon, |\X_0|^{-1}$ are smaller than some positive constants independent of $T$, then \eqref{eng-basic} is proved.

\end{proof}

\begin{lemma}
	\label{lem-eng-1}
	Under the assumptions of \cref{prop-apriori-shock}, there exists small positive constant $\nu_3,\varepsilon_3$ independent of $T$ such that, if $\nu<\nu_3$ and $\varepsilon+|\X_0|^{-1}<\varepsilon_3$, then we have
	\begin{equation}\label{eng-1}
		\begin{aligned}
			\sup_{t\in[0,T]}
			&  \abs{U(t)}_{\beta}^2 +\int_{0}^{T}\big(\abs{ U  }_{\beta-1}^2+ \abs{\p_x U}_{\beta}^2 \big)  d t \\
			&\leq C (\norm{U_0}^2 + |\X_0|^{-1}) + C \X_0^{-\underline{\beta}} \int_{0}^{T} (1+|\X_0|+t)^{\beta+3} |\p_x^2 U(0,t)|^2  d t + C\nu,
		\end{aligned} 
	\end{equation}
    where $\underline{\beta}=\min\{\beta,1\}.$
\end{lemma}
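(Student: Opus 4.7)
The plan is to run the Liu--Nishihara weighted energy argument of Lemma~\ref{lem-eng-basic}, now adding the spatial weight $\langle x\rangle^\beta$. Concretely, I would multiply the linearized equation \eqref{eqn-anti-deriv1} by $w(\phi^S_\X)\langle x\rangle^\beta U$ and integrate over $\R_-$. Since $U(0,t)=U(-\infty,t)=0$, the time derivative produces $\tfrac{1}{2}\frac{d}{dt}\int w(\phi^S_\X)\langle x\rangle^\beta U^2\,dx$ modulo the harmless $\X'$ correction already handled via \eqref{eng-basic-1'}. Integration by parts on the diffusion and convection terms yields the coercive contributions $|\p_x U|_\beta^2$ and the Oleinik-type shock dissipation $\int(-\p_x\phi^S_\X)\langle x\rangle^\beta U^2\,dx$, plus a new ``weight-derivative'' piece arising from combining $-\tfrac{1}{2}\int U^2\,\p_x^2(w\langle x\rangle^\beta)\,dx$ (from the diffusion) with $-\tfrac{\beta}{2}\int f'(\phi^S_\X)\,w(\phi^S_\X)\,x\langle x\rangle^{\beta-2}\,U^2\,dx$ (from the convection). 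A direct computation using $x<0$, the identity $(wf_0)''\equiv 2$, and the behavior of $f'(\phi^S_\X)$ at the far field shows that this combined piece is positive definite of order $|U|_{\beta-1}^2$, which is exactly the dissipation claimed in \eqref{eng-1}.

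The right-hand side is then treated along the lines of Lemma~\ref{lem-eng-basic}. The nonlinear term $H_0=\mathcal{O}(|\p_x U|^2)$ is absorbed by $C\varepsilon|\p_x U|_\beta^2$ via the Sobolev bound $\|U\|_{L^\infty}\lesssim\varepsilon^{1/2}$. The linearization residual $(f'(\phi^S_\X)-f'(\usharp))\p_x U$ is bounded pointwise by $C|\ur-\ubr|\,\sigma_\X\,|\p_x U|$; combining $\|x(\ur-\ubr)\|_{L^\infty}\lesssim\nu$ from \eqref{ineq-per-t1} with a weighted Hardy inequality (valid since $U(0,t)=0$) then yields an absorbable $C\nu|\p_x U|_\beta^2$. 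Finally, Lemma~\ref{lem-source} gives $|H_1|_\beta\leq C\nu e^{-c(|\X_0|+t)}$ directly, so pairing with $U$ in $L^2_\beta$ and integrating in time produces only an $\mathcal{O}(\nu)$ remainder.

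The main obstacle, as in the basic estimate, is the combination $-\X'(\ubl-\phi^S_\X)-H_2$, whose contribution is now amplified by $\langle x\rangle^\beta$, which can reach $(1+|\X_0|+t)^\beta$ on the transition region $[st+\X,0]$. I would split the spatial integral at $x=st+\X$. On $(-\infty,st+\X]$, the exponential decay of $|\ubl-\phi^S_\X|$ and of $H_2$ (as in \eqref{eng-basic-51} and Lemma~\ref{lem-source}) beats the polynomial weight, producing only a $C\nu e^{-c|\X_0|}$ residual plus pieces absorbable into the shock dissipation. On $[st+\X,0]$ I would write $H_2(x,t)=H_2(0,t)+\int_x^0 h_2\,dy$, use the boundary identity from the proof of Lemma~\ref{lem-shift1} to bound $|H_2(0,t)|\lesssim|\X'|+e^{-c(|\X_0|+t)}+|\p_x^2 U(0,t)|$, and invoke $U(0,t)=0$ together with Hardy to convert the $U$-factor into $\p_x U$. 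The length $|st+\X|\lesssim 1+|\X_0|+t$ of the interval introduces a polynomial prefactor matching the weighted boundary term in \eqref{eqn-assum-shock}, and balancing this polynomial growth against the exponential gain from \eqref{ineq-shift1} is the tightest step of the entire argument: it is this balance that yields the factor $|\X_0|^{-\underline{\beta}}$ with $\underline{\beta}=\min\{\beta,1\}$, the case $\beta<1$ being binding because the Hardy step there leaves a deficit of $|\X_0|^{-\beta}$ that cannot be improved. Combining all the above estimates and integrating in time produces \eqref{eng-1}.
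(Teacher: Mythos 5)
Your proposal multiplies the equation by $w(\phi^S_\X)\langle x\rangle^\beta U$ with the \emph{static} weight $\langle x\rangle^\beta$, and this breaks the key coercivity. The paper instead multiplies by $\xiangle^\beta\,w(\phi^S_\X)U$, where $\xiangle=\langle\xi-\xi_*\rangle$, $\xi=x-st-\X(t)$, and $\xi_*$ is fixed by $\phi^S(\xi_*)=(\ubl+\ubr)/2$, equivalently $(wf_0)'(\phi^S(\xi_*))=0$. After the integrations by parts (including the one generated by $\p_t\xiangle^\beta$), the combined dissipation coefficient is the paper's $A_\beta$ in \eqref{eng-1-1'},
\begin{equation*}
A_\beta \;=\; \xiangle\big[(wf_0)''(\phi^S_\X)-\X'w'(\phi^S_\X)\big](-\p_x\phi^S_\X)\;-\;\beta\,\frac{\xi-\xi_*}{\xiangle}\,(wf_0)'(\phi^S_\X),
\end{equation*}
and the second summand is \emph{nonnegative} precisely because $\tfrac{\xi-\xi_*}{\xiangle}$ and $(wf_0)'(\phi^S_\X)=2\phi^S_\X-(\ubl+\ubr)$ vanish and change sign at the same point $\xi=\xi_*$; near $\xi_*$ the shock-dissipation term $\xiangle\cdot 2(-\p_x\phi^S_\X)$ takes over, so $A_\beta\geq c>0$ uniformly. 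This centering at $\xi_*$ is the whole point of the weight.

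With your static $\langle x\rangle^\beta$ the weight-derivative factor becomes $x/\langle x\rangle$, which is $\leq 0$ on all of $\R_-$ and never changes sign, while $(wf_0)'(\phi^S_\X)$ still changes sign at the moving location $x=st+\X+\xi_*$. On the transition region $st+\X+\xi_*<x<0$ (whose length grows like $1+|\X_0|+t$), one has $\phi^S_\X<(\ubl+\ubr)/2$, hence $(wf_0)'(\phi^S_\X)<0$, so $-\beta\tfrac{x}{\langle x\rangle}(wf_0)'(\phi^S_\X)$ is \emph{strictly negative} there and of size $\mathcal{O}(1)$; meanwhile the shock dissipation $\langle x\rangle(-\p_x\phi^S_\X)$ is exponentially small since $x-st-\X\gg 1$. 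So the quantity you call the ``combined piece'' is not positive definite, and the claimed $|U|_{\beta-1}^2$ dissipation does not materialize. (Your heuristic based on $f'(\phi^S_\X)w(\phi^S_\X)$ rather than $(wf_0)'(\phi^S_\X)$ also has the wrong sign near the boundary: $w(\ubr)f'(\ubr)=\tfrac{(\ubr-\ubl)f'(\ubr)}{f'(\ubr)-s}<0$ because $f'(\ubr)<s<0$, so the convection contribution you identify as coercive is negative for $x$ near $0$.) The remainder of your outline---splitting $-\X'(\ubl-\phi^S_\X)-H_2$ at $x=st+\X$, using the boundary identity to control $H_2(0,t)$, Hardy on the near-boundary piece, absorbing $H_0$ and the linearization residual---does track the paper, but none of it helps unless $A_\beta\geq c$ holds, and for that you must replace $\langle x\rangle^\beta$ by the moving weight $\xiangle^\beta$ centered at $\xi_*$.
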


\begin{proof}
	Let $\xi=x-st-\X$ and $\xi_*$ satisfying $\phi^S(\xi_*)=(\ubr+\ubl)/2$.
	By multiplying \eqref{eqn-anti-deriv1} with $\xiangle^{\beta} w(\phi^S_{\X}) U$, integrating the resultant on $(-\infty,0)$, using integration by parts, we have
	\begin{align}
		& \dt\int_{-\infty}^{0} \frac{\xiangle^{\beta}}{2}  w(\phi^S_{\X}) U^2 dx 
		+  \int_{-\infty}^{0} \Big[\frac{\xiangle^{\beta-1}}{2} A_\beta U^2 +\xiangle^{\beta}w(\phi^S_{\X})|\p_x U|^2\Big] dx \notag \\
		=	&  - \int_{-\infty}^{0} \frac{\beta}{2} \xiangle^{\beta-2}(\xi-\xi_*) w(\phi^S_\X) (\X'  U^2+2U\p_x U) dx +\int_{-\infty}^{0}  \xiangle^{\beta} w(\phi^S_{\X})  \notag\\
		& \qquad\cdot \{(f'(\phi^S_{\X}) -f'(\usharp)) \p_x U - \X' (\ubl-\phi^S_{\X}) + H_0-H_1-H_2 \} U  dx, \label{eng-1-1}
	\end{align}
	where similar as \eqref{eng-basic-1'} in \cref{lem-eng-basic},
	\begin{align}
		A_\beta&=\xiangle [(w f_0)''(\phi^S_{\X}) - \X' w'(\phi^S_{\X}) ] (-\p_x \phi^S_{\X})-\beta\frac{\xi-\xi_*}{\xiangle}(wf_0)'(\phi^S_\X)   \notag\\
		&\geq (2-C\varepsilon)\xiangle (-\p_x \phi^S_{\X})-\beta\frac{\xi-\xi_*}{\xiangle}(2\phi^S_\X-\ubr-\ubl)  \notag\\
		&\geq c \quad \text{for small~} \varepsilon . \label{eng-1-1'}
    \end{align}
    Moreover, by \eqref{eqn-assum-shock}, \eqref{def-H0}, \eqref{ineq-H}, the H\"{o}lder inequality and Young inequality, it yields that
    \begin{align}
    	&\abs{\int_{-\infty}^{0} \frac{\beta}{2} \xiangle^{\beta-2}(\xi-\xi_*) w(\phi^S_\X) (\X'  U^2+2U\p_x U) dx} \notag\\
    	&\qquad	\leq \frac{c}{4} \abs{U}_{\beta-1}^2 + C\abs{\p_x U}_{\beta-1}^2
    	\leq \frac{c}{4} \abs{U}_{\beta-1}^2 + \frac{c}{4}\abs{\p_x U}_{\beta}^2 + C \norm{\p_x U}^2, \label{eng-1-2} \\
    	&\Big|\int_{-\infty}^{0} \xiangle^{\beta}w(\phi^S_{\X}) (f'(\phi^S_{\X}) -f'(\usharp))U \p_x U   dx \Big| \notag\\
    	& \qquad \leq C \norm{\xiangle^{\frac{1}{2}} (\ur-\ubr)}_{L^{\infty}} \norm{\sigma_{\X}}_{L^{\infty}} \abs{U}_{\beta-1} \abs{\p_x U}_{\beta} \notag\\
    	&\qquad\leq  C\nu (\abs{U}_{\beta-1}^2+ \abs{\p_x U}_{\beta}^2),      \label{eng-1-3}\\
    	&\abs{\int_{-\infty}^{0} \xiangle^{\beta}w(\phi^S_{\X}) (H_0+H_1) U  dx} \leq  C\varepsilon \abs{\p_x U}_{\beta}^2+C\nu e^{-c(|\X_0|+t)} (1+\abs{U}_{\beta}^2). \label{eng-1-4}
    \end{align}
    It remains to estimate 
    \begin{align}
    	&\abs{\int_{-\infty}^{0}  \xiangle^{\beta} w(\phi^S_{\X})  (-\X' (\ubl-\phi^S_{\X}) -H_2 ) U  dx} \notag\\
    	&\leq C \int_{-\infty}^{0} \xiangle^{\beta}(| \X' | |\ubl-\phi^S_{\X}| +  |H_2|) |U|  dx 
    	=C \left(\int_{-\infty}^{st+\X} +  \int_{st+\X}^0\right) \,. \label{eng-1-50}
    \end{align}
   (i) For $x\leq st+\X$, noting that 
   $|\ubl-\phi^S(\xi)|\leq C|\p_x \phi^S(\xi)|$ for $\xi \leq 0$ and $\abs{H_2(x,t)} \leq C  \nu e^{-c(|\X_0|+t)} |\p_x \phi^S_{\X}| $, it holds that
   and thus 
   \begin{align}
   	&\int_{-\infty}^{st+\X}  \xiangle^{\beta}(| \X' | |\ubl-\phi^S_{\X}| +  |H_2|) |U| dx \notag \\
   	&\leq  \frac{c}{12} \abs{U}_{\beta-1}^2 +
   	C (| \X' |+\nu e^{-c(|\X_0|+t)})^2\int_{-\infty}^{st+\X} \xiangle^{\beta+1} |\p_x \phi^S_{\X}|^2  dx \notag\\
   	& \leq \frac{c}{12} \abs{U}_{\beta-1}^2 +
   	C (| \X' |+\nu e^{-c(|\X_0|+t)})^2.\label{eng-1-51}
   \end{align}
   (ii) For $st+\X<x\leq 0$, reviewing that $H_2(0,t)= \mathcal{O} ( |\X'| +e^{-c(|\X_0|+t)}+\abs{\p_x^2 U(0,t)}  )$ (obtained in \cref{lem-eng-basic}), 
   it yields that
   \begin{align}
   	&\int_{st+\X}^{0} \xiangle^{\beta} (| \X'(t)| |\ubl-\phi^S_{\X}(x,t)|+|H_2(0,t)|)  |U(x,t)|  dx \notag\\
   	\leq & C( |\X'| +e^{-c(|\X_0|+t)}+\abs{\p_x^2 U(0,t)} )  \big(\int_{st+\X}^0 \xiangle^{\beta+1}  dx\big)^{1/2} \abs{U}_{\beta-1} \notag\\
   	\leq & \frac{c}{12}\abs{U}_{\beta-1}^2 +C( |\X'| +e^{-c(|\X_0|+t)}+\abs{\p_x^2 U(0,t)} )^2 (1+|\X_0|+t)^{\beta+2}.  \label{eng-1-52}
   \end{align}
   At the same time, reviewing that $|H_2(x,t)- H_2(0,t)|\leq C\nu e^{c(st+\X)}$ if $st+\X<x\leq \frac{st+\X}{2}$ and $|H_2(x,t)- H_2(0,t)|\leq C\nu |\p_x\phi^S_{\X}|$ if $\frac{st+\X}{2}<x\leq 0$ (obtained in \cref{lem-eng-basic}), with the Cauchy-Schwartz inequality, one has 
   \begin{align}
   	&\int_{st+\X}^0 \xiangle^{\beta} |H_2(x,t)-H_2(0,t)| |U(x,t)|  dx \notag\\
   	\leq & C \int_{st+\X}^{\frac{st+\X}{2}} \xiangle^{\beta} \nu   e^{c(st+\X)}  |U| dx 
   	+ C\int_{\frac{st+\X}{2}}^0 \xiangle^{\beta} \nu |\p_x\phi^S_{\X}| |U| dx \notag\\
   	\leq & C\nu (st+\X)^{\frac{\beta+2}{2}}e^{c(st+\X)}\abs{U}_{\beta-1} 
   	+ C\nu\left(\int_{\frac{st+\X}{2}}^0 \xiangle^{\beta+1} |\p_x\phi^S_{\X}|^2  dx\right)^{\frac{1}{2}} \abs{U}_{\beta-1}  \notag\\
   	\leq & C\nu (st+\X)^{\frac{\beta+2}{2}}e^{c(st+\X)}\abs{U}_{\beta-1} \leq  C\nu e^{-c(|\X_0|+t)} \abs{U}_{\beta-1}  \,.
   	\label{eng-1-53}
   \end{align}
   In virtue of \eqref{eng-1-50}, \eqref{eng-1-51},  \eqref{eng-1-52}, \eqref{eng-1-53}, Lemma \ref{lem-shift1}, the H\"{o}lder inequality and Young inequality, one has
   \begin{align}
   	&\abs{\int_{0}^{T}\int_{-\infty}^{0} \xiangle^{\beta} w(\phi^S_{\X}) (  -\X' (\ubl-\phi^S_{\X}) -H_2) U  dx dt} \notag\\
   	\leq & \frac{c}{4} \int_{0}^{T} \abs{U}_{\beta-1}^2  d t 
   	+ C \int_{0}^{T} (1+|\X_0|+t)^{\beta+2} (|\X'|^2+e^{-2c(|\X_0|+t)}+\abs{\p_x^2 U(0,t)}^2)  dt \notag\\
   	&\qquad+ C \nu e^{-c|\X_0|}\notag\\
   	\leq & \frac{c}{4} \int_{0}^{T} \abs{U}_{\beta-1}^2  d t  
   	+ C e^{-c|\X_0|}+C\X_0^{-1}\int_{0}^{T} (1+|\X_0|+t)^{\beta+3} \abs{\p_x^2 U(0,t)}^2  dt \, .
   	\label{eng-1-5}
   \end{align} 

   Therefore, combining \eqref{eng-1-1'}--\eqref{eng-1-4}, \eqref{eng-1-5} and \cref{lem-eng-basic}, and integrating \eqref{eng-1-1} in time $[0,t]$, it yields that 
   if $\nu, \varepsilon, |\X_0|^{-1}$ are smaller than some positive constants independent of $T$, then \eqref{eng-1} is proved.
\end{proof}

\subsubsection{Higher order estimates}

When considering the spatially derivatives of \eqref{eqn-anti-deriv1}, the derivatives of $\X'(\ubl-\phi^S_{\X})-H_2$ are easier to 

\begin{lemma}
	\label{lem-eng-2}
	Under the assumptions of \cref{prop-apriori-shock}, there exists small positive constant $\nu_4,\varepsilon_4$ independent of $T$ such that, if $\nu<\nu_4$ and $\varepsilon+|\X_0|^{-1}<\varepsilon_4$, then we have
	\begin{equation}\label{eng-2}
		\begin{aligned}
			\sup_{t\in[0,T]} & \abs{\p_x U(t)}_{\beta+1}^2 +\int_{0}^{T}  \abs{\p_x^2 U}_{\beta+1}^2  d t \\
			&\leq C (\norm{U_0}_{H_{\beta}^1}^2 + |\X_0|^{-1}) + C \X_0^{-\underline{\beta}} \int_{0}^{T}(1+|\X_0|+t)^{\beta+3} |\p_x^2 U(0,t )|^2  d t + C\nu.
		\end{aligned} 
	\end{equation}
\end{lemma}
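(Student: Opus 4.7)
My plan is to mimic the scheme of Lemma~\ref{lem-eng-1} applied to $V := \p_x U$ in place of $U$. I would first differentiate the linearized equation \eqref{eqn-anti-deriv1} once in $x$, obtaining
\begin{equation*}
\p_t V + f'(\phi^S_{\X}) \p_x V + f''(\phi^S_{\X}) \p_x \phi^S_{\X} \cdot V - \p_x^2 V = \p_x (\text{RHS of \eqref{eqn-anti-deriv1}}),
\end{equation*}
multiply by $\xiangle^{\beta+1} w(\phi^S_{\X}) V$, and integrate over $\R_-$. The principal part produces, in parallel to \eqref{eng-1-1}--\eqref{eng-1-1'}, a good time derivative $\dt \tfrac{1}{2}\int \xiangle^{\beta+1} w(\phi^S_{\X}) V^2 \, dx$, the weighted dissipation $\int \xiangle^{\beta+1} w(\phi^S_{\X}) |\p_x V|^2 \, dx \sim \abs{\p_x^2 U}_{\beta+1}^2$, and a positive lower-order term of size $\abs{V}_{\beta}^2$ coming from the weight commutator and the zero-order coefficient $f''(\phi^S_{\X}) \p_x \phi^S_{\X}$. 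All commutator remainders are absorbed by $\varepsilon \abs{\p_x V}_{\beta+1}^2 + C \abs{V}_{\beta}^2$.

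Next I would treat the boundary term at $x=0$ generated by integrating $-\p_x^2 V$ against the test function by parts, namely
\begin{equation*}
- \xiangle^{\beta+1} w(\phi^S_{\X}) \, \p_x V \cdot V \Big|_{x=0} \sim (1+|\X_0|+t)^{\beta+1}\, \p_x^2 U(0,t) \cdot \p_x U(0,t).
\end{equation*}
Combining Young's inequality with the interior trace $|\p_x U(0,t)|^2 \leq C \norm{\p_x U}\,\norm{\p_x^2 U}$ (the $\norm{\p_x U}$ factor being already controlled by Lemma~\ref{lem-eng-basic}), and redistributing two additional factors of $(1+|\X_0|+t)$ onto this trace at the cost of $|\X_0|^{-1}$-smallness, I would bound this boundary contribution by $C \X_0^{-\underline{\beta}}(1+|\X_0|+t)^{\beta+3} |\p_x^2 U(0,t)|^2$ plus a small multiple of $\abs{\p_x^2 U}_{\beta+1}^2$.

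The right-hand source terms split as before. The nonlinear term $\p_x H_0$ and the quadratic coupling $\p_x[(f'(\phi^S_{\X}) - f'(\usharp))\p_x U]$ contribute $C(\varepsilon+\nu)(\abs{\p_x^2 U}_{\beta+1}^2 + \abs{\p_x U}_{\beta}^2)$ using the exponential decay $|\ur - \ubr| \leq C \nu e^{\theta_b x}$ from Lemma~\ref{lem-per-t1}. The explicitly decaying pieces $\p_x H_1$ and $h_2$ are controlled directly by Lemma~\ref{lem-source}, producing an $O(\nu e^{-c(|\X_0|+t)})$ contribution. The shift piece $\X'(t)\, \p_x^2 \phi^S_{\X}$ together with the differentiated $H_2$ terms is split on the three subintervals $(-\infty, st+\X)$, $(st+\X,(st+\X)/2)$, and $((st+\X)/2, 0)$ exactly as in \eqref{eng-1-50}--\eqref{eng-1-53}; combined with Lemma~\ref{lem-shift1}, this yields the desired $\X_0^{-\underline{\beta}} \int_0^T (1+|\X_0|+t)^{\beta+3} |\p_x^2 U(0,t)|^2 dt$ contribution. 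Integrating in time and adding a small multiple of \eqref{eng-1} to absorb the $\abs{V}_{\beta}^2$ and $\abs{U}_{\beta-1}^2$ remainders closes the estimate once $\nu$, $\varepsilon$, and $|\X_0|^{-1}$ are small enough.

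The main obstacle I foresee is precisely this boundary contribution at $x=0$. The weight $\xiangle^{\beta+1}$ evaluated at $x=0$ already grows like $(1+|\X_0|+t)^{\beta+1}$, whereas the a priori norm only controls $(1+|\X_0|+t)^{\beta+3} |\p_x^2 U(0,t)|^2$; matching the two powers requires trading two extra growing weights against the trace $|\p_x U(0,t)|$, and producing the required smallness coefficient $\X_0^{-\underline{\beta}}$ in front of the boundary norm on the right-hand side of \eqref{eng-2}. The delicate bookkeeping of weights at the boundary — ensuring that every boundary-originated contribution either carries the factor $\X_0^{-\underline{\beta}}$ or decays like $e^{-c(|\X_0|+t)}$ — is what distinguishes this estimate from the constant-boundary analysis of \cite{Liu.N1997}.
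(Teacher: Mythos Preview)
Your scheme misses the key structural fact that drives the paper's argument: the boundary trace $\p_x U(0,t)$ is \emph{explicitly} exponentially small. Indeed, since $u(0,t)=u_b(t)$ and $\ur(0,t)=u_b(t)$, the ansatz \eqref{def-ansatz-shock} gives
\[
\p_x U(0,t)=u(0,t)-\usharp(0,t)=(u_b(t)-\ubl)\bigl(1-\sigma(-st-\X)\bigr)=\mathcal{O}\bigl(e^{\theta_s(st+\X)}\bigr),
\]
so the boundary contribution in the paper's identity \eqref{eng-2-1} is bounded directly by $(1+|\X_0|+t)^{\beta+1}\bigl(e^{-c(|\X_0|+t)}+|\p_x^2 U(0,t)|^2\bigr)$; see \eqref{eng-2-2}. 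There is no need for the interior trace inequality or for ``redistributing two additional factors'' of the time weight. Your workaround via $|\p_x U(0,t)|^2\leq C\|\p_x U\|\,\|\p_x^2 U\|$ does not obviously close: the transport part of the boundary term has the \emph{wrong sign} (since $f'(\usharp(0,t))<0$) and carries the full growing weight $(1+|\X_0|+t)^{\beta+1}$, so after the trace bound you are left with $\int_0^T(1+|\X_0|+t)^{\beta+1}\|\p_x U\|\,\|\p_x^2 U\|\,dt$, which is not controlled by the available norms for general $\beta>0$.

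Two secondary points where you are overcomplicating. First, the weight $w(\phi^S_\X)$ and the hoped-for positive $|V|_\beta^2$ term are unnecessary: the paper multiplies \eqref{eqn-anti-deriv2} by the plain $\xiangle^{\beta+1}\p_x U$, and every occurrence of $|\p_x U|_{\beta}^2$ on the right is simply absorbed by \cref{lem-eng-basic,lem-eng-1}, not by a sign argument. Second, the three-interval splitting of \eqref{eng-1-50}--\eqref{eng-1-53} is not needed here. Differentiation has already localised the troublesome terms: $-\p_x\bigl[\X'(\ubl-\phi^S_\X)\bigr]=\X'\p_x\phi^S_\X$ and $-\p_x H_2=-h_2$ are both exponentially concentrated (\cref{lem-source}), so the paper bounds them in one stroke via $|\X'\p_x\phi^S_\X|_{\beta+1}+|\p_x H_1|_{\beta+1}+|h_2|_{\beta+1}\leq C(|\X'|+\nu e^{-c(|\X_0|+t)})$, as in \eqref{eng-2-6}. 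The entire lemma then follows by combining \eqref{eng-2-2}--\eqref{eng-2-6} with the previous two lemmas.
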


\begin{proof}
Differentiate \eqref{eqn-anti-deriv1} with respect to $x$ gives 
\begin{equation}
	\label{eqn-anti-deriv2}
		\p_t\p_x U+\p_x(f'(\usharp)\p_x U)-\p_x^3 U=\X'\p_x\phi^S_{\X} + \p_x H_0-\p_x H_1-h_2\,, 
\end{equation}
By multiplying \eqref{eqn-anti-deriv2} with $\xiangle^{\beta+1}\p_x U$, integrating the resultant on $(-\infty,0)$, using integration by parts, we have
\begin{align}
	& \dt\int_{-\infty}^{0} \frac{\xiangle^{\beta+1}}{2}  |\p_x U|^2 dx 
     +  \int_{-\infty}^{0} \xiangle^{\beta+1}|\p_x^2 U|^2 dx \notag\\
   =&-  \Big\{\xiangle^{\beta+1}\big(\frac{1}{2}f'(\usharp)|\p_x U|^2-\p_x U\p_x^2 U\big)\Big\}\Big\vert_{x=0} \notag \\	
    &+\int_{-\infty}^{0} \frac{\beta+1}{2} \xiangle^{\beta-1}(\xi-\xi_*) 
      [(f'(\usharp)-s-\X')  |\p_x U|^2-2\p_x U\p_x^2 U] dx  \notag	\\
    &- \int_{-\infty}^{0} \frac{1}{2} \xiangle^{\beta+1} f''(\usharp)\p_x \usharp |\p_x U|^2 dx \notag\\
    &  + \int_{-\infty}^{0}  \xiangle^{\beta+1}  ( \X' \p_x\phi^S_{\X} +\p_x H_0-\p_x H_1-h_2 ) \p_x U  dx . \label{eng-2-1}
\end{align}
where $\p_x H_0 = \mathcal{O} (|\p_x \usharp||\p_x U|^2+|\p_x U\p_x^2 U|)$.
Thanks to \eqref{bc0} and \eqref{def-ansatz-shock}, we have 
\begin{equation*}
	\label{ic-1}
	\p_x U(0,t)=u_b(t)-\usharp(0,t)=(u_b(t)-\ubl)(1-\sigma(-st-\X))=\mathcal{O}(e^{\theta_s(st+\X)}),
\end{equation*}
and thus it follows that
\begin{align}
	&\abs{ \Big\{\xiangle^{\beta+1}\big(\frac{1}{2}f'(\usharp)|\p_x U|^2-\p_x U\p_x^2 U\big) \Big\}	\Big\vert_{x=0} } \notag \\
	&\qquad\leq C(st+\X)^{\beta+1} (e^{2c(st+\X)} + |\p_x^2 U(0,t)|^2) \notag\\
	&\qquad\leq C(1+|\X_0|+t)^{\beta+1} (e^{-c(|\X_0|+t)} + |\p_x^2 U(0,t)|^2)\,. \label{eng-2-2}
\end{align}
In virtue of \eqref{eqn-assum-shock}, \eqref{def-H0}, \eqref{ineq-H}, the H\"{o}lder inequality and Young inequality, it yields that
\begin{align}
	&\abs{\int_{-\infty}^{0}  \xiangle^{\beta-1}(\xi-\xi_*) 
		[(f'(\usharp)-s-\X')  |\p_x U|^2-2\p_x U\p_x^2 U] dx  } \notag\\
	&\qquad \leq \frac{1}{2}\abs{\p_x^2 U}_{\beta+1}^2+ C \abs{\p_x U}_{\beta}^2, \label{eng-2-3}\\
	&\abs{\int_{-\infty}^{0}  \xiangle^{\beta+1} f''(\usharp)\p_x \usharp |\p_x U|^2 dx } \leq \norm{\xiangle \p_x \usharp}_{L^\infty} \abs{\p_x U}_{\beta}^2  \leq  C\abs{\p_x U}_{\beta}^2,  \label{eng-2-4} \\
	&\abs{\int_{-\infty}^{0}  \xiangle^{\beta+1}    \p_x H_0  \p_x U  dx } \notag\\
	&\qquad\leq C \norm{\xiangle^{\frac{1}{2}} \p_x U}_{L^{\infty}} (\norm{\xiangle^{\frac{1}{2}} \p_x \usharp}_{L^{\infty}}\abs{\p_x U}_{\beta}^2 +\abs{\p_x U}_{\beta}\abs{\p_x^2 U}_{\beta+1})\notag\\
	&\qquad\leq C\varepsilon (\abs{\p_x^2 U}_{\beta+1}^2+\abs{\p_x U}_{\beta}^2), \label{eng-2-5}  \\
	&\abs{\int_{0}^{T}\int_{-\infty}^{0}  \xiangle^{\beta+1}  ( \X' \p_x\phi^S_{\X}-\p_x H_1-h_2 ) \p_x U  dx dt} \notag \\
	&\qquad \leq C\int_{0}^{T}(\abs{\X'\p_x\phi^S_{\X} }_{\beta+1} +  \abs{\p_x H_1 }_{\beta+1} + \abs{h_2 }_{\beta+1} )\abs{\p_x U}_{\beta+1}  dt \notag\\
	&\qquad \leq C\int_{0}^{T}(\abs{\X' } +  \nu e^{-c(|\X_0|+t)})\abs{\p_x U}_{\beta+1}  dt \notag\\
	&\qquad \leq \frac{1}{2}\sup_{t\in[0,T]}\abs{\p_x U}_{\beta+1}^2 + C e^{-c|\X_0|}+C\X_0^{-1}\int_{0}^{T} (1+|\X_0|+t)^{\beta+3} \abs{\p_x^2 U(0,t)}^2  dt . \label{eng-2-6}
\end{align} 

Therefore, combining \eqref{eng-2-2}--\eqref{eng-2-6}, and \cref{lem-shift1,lem-eng-basic,lem-eng-1}, and integrating \eqref{eng-2-1} in time $[0,t]$, it yields that 
if $\nu, \varepsilon, |\X_0|^{-1}$ are smaller than some positive constants independent of $T$, then \eqref{eng-2} is proved.
\end{proof}

\begin{lemma}
	\label{lem-eng-3}
	Under the assumptions of \cref{prop-apriori-shock}, there exists small positive constant $\nu_5,\varepsilon_5$ independent of $T$ such that, if $\nu<\nu_5$ and $\varepsilon+|\X_0|^{-1}<\varepsilon_5$, then we have
	\begin{equation}\label{eng-3}
		\begin{aligned}
			\sup_{t\in[0,T]}
			&  \abs{\p_x^2 U(t)}_{\beta+2}^2 +\int_{0}^{T}\big\{ (1+|\X_0|+t)^{\beta+2} |\p_x^2 U(0,t )|^2   + \abs{\p_x^3 U}_{\beta+2}^2 \big\}  d t \\
			&\leq C (\norm{U_0}_{H_{\beta}^2}^2 + |\X_0|^{-1})  + C \X_0^{-\underline{\beta}} \int_{0}^{T} (1+|\X_0|+t)^{\beta+3} |\p_x^2 U(0,t )|^2  d t + C\nu.
		\end{aligned} 
	\end{equation}
\end{lemma}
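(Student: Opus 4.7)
The plan is to mimic the structure used in Lemma \ref{lem-eng-2}, one derivative higher, with the key new feature that the boundary integration-by-parts contribution is rearranged via the PDE at $x=0$ and the sign of $f'(\usharp(0,t))$ to generate the desired $(1+|\X_0|+t)^{\beta+2}|\p_x^2 U(0,t)|^2$ term on the left-hand side.

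First, I differentiate \eqref{eqn-anti-deriv2} once more in $x$ to obtain
\begin{equation*}
\p_t\p_x^2 U + \p_x^2(f'(\usharp)\p_x U) - \p_x^4 U = \X'\p_x^2\phi^S_{\X} + \p_x^2 H_0 - \p_x^2 H_1 - \p_x h_2 \,,
\end{equation*}
multiply by $\xiangle^{\beta+2}\p_x^2 U$, and integrate over $\R_-$. The time derivative produces the energy $\tfrac12\dt\abs{\p_x^2 U}_{\beta+2}^2$ plus a lower-order weight-time-derivative term of type $(\beta+2)(s+\X')\int\xiangle^{\beta}(\xi-\xi_*)|\p_x^2 U|^2 dx$, which is controlled exactly as in Lemma \ref{lem-eng-2} using $|\X'|\leq C$. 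Two integrations by parts on $-\int\xiangle^{\beta+2}\p_x^4 U\,\p_x^2 U\,dx$ produce the good dissipation $\int\xiangle^{\beta+2}|\p_x^3 U|^2 dx$, a cross term $(\beta+2)\int\xiangle^{\beta}(\xi-\xi_*)\p_x^2 U\,\p_x^3 U\,dx$ absorbable into the dissipation and $\abs{\p_x^2 U}_{\beta+2}^2$, and the crucial boundary contribution
\begin{equation*}
-\xiangle^{\beta+2}\big|_{x=0}\,\p_x^3 U(0,t)\,\p_x^2 U(0,t) \,.
\end{equation*}

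The main step is to turn this boundary term into the desired positive quantity. From equation \eqref{eqn-anti-deriv2} evaluated at $x=0$ together with $\p_x U(0,t) = u_b(t) - \usharp(0,t)$ (hence $\p_t\p_x U(0,t) = \dot u_b(t) - \p_t\usharp(0,t)$), I solve
\begin{equation*}
\p_x^3 U(0,t) = f'(\usharp(0,t))\,\p_x^2 U(0,t) + R(t) \,,
\end{equation*}
where the remainder $R(t)$ collects $\dot u_b$, $\p_t\usharp(0,t)$, $f''(\usharp)\p_x\usharp\p_x U|_{x=0}$, $-\X'\p_x\phi^S(-st-\X)$, $-\p_x H_0(0,t)$, $\p_x H_1(0,t)$ and $h_2(0,t)$. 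By Lemmas \ref{lem-shift1} and \ref{lem-source}, the smallness of $\p_x U(0,t)$, and the decay of $\phi^S-\ubr$ and $\ur-\ubr$ near $x=0$, each term in $R(t)$ is bounded by $C\nu + Ce^{-c(|\X_0|+t)} + C|\p_x^2 U(0,t)|$. The key sign input is that, since $|\X_0|^{-1}\leq\varepsilon_0$ is small and $st+\X(t)\leq\X_0$ by Lemma \ref{lem-shift1}, $\usharp(0,t)$ stays close to $\ubr$, so by the non-degeneracy $f'(\ubr)<s<0$ we get $f'(\usharp(0,t)) \leq -c_0<0$ uniformly in $t$. Therefore
\begin{equation*}
-\xiangle^{\beta+2}\big|_{x=0}\p_x^3 U(0,t)\p_x^2 U(0,t) \geq c_0(1+|\X_0|+t)^{\beta+2}|\p_x^2 U(0,t)|^2 - C(1+|\X_0|+t)^{\beta+2}|R(t)|\,|\p_x^2 U(0,t)| \,,
\end{equation*}
and after Cauchy--Schwarz the error absorbs into $\tfrac{c_0}{2}(1+|\X_0|+t)^{\beta+2}|\p_x^2 U(0,t)|^2$ plus terms of type $C(1+|\X_0|+t)^{\beta+2}(|\X'|^2 + |\p_x U(0,t)|^2 + \nu^2 e^{-c(|\X_0|+t)})$ that are controlled via Lemma \ref{lem-shift1} and the weight absorption trick $(1+|\X_0|+t)^{\beta+2}\leq \X_0^{-1}(1+|\X_0|+t)^{\beta+3}$.

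The remaining right-hand-side contributions are routine: the nonlinear flux $\p_x^2(f'(\usharp)\p_x U)$ splits into one term absorbable into the dissipation via $\norm{\xiangle\p_x\usharp}_{L^\infty}<\infty$ together with Sobolev embedding and a factor $\sqrt{\varepsilon}$ from the a priori bound, while $\X'\p_x^2\phi^S_{\X}$, $\p_x^2 H_0$, $\p_x^2 H_1$ and $\p_x h_2$ are bounded by $C|\X'| + C\varepsilon\abs{\p_x^3 U}_{\beta+2} + C\nu e^{-c(|\X_0|+t)}$ as in Lemmas \ref{lem-source} and \ref{lem-eng-2}. Integrating in time on $[0,t]$ and invoking Lemmas \ref{lem-shift1}--\ref{lem-eng-2} to close, one obtains \eqref{eng-3} for $\nu,\varepsilon,|\X_0|^{-1}$ sufficiently small. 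The main obstacle is the sign analysis at the boundary described above: one must verify that the product $-\xiangle^{\beta+2}|_{x=0}\p_x^3 U(0,t)\p_x^2 U(0,t)$ is \emph{coercive} in $|\p_x^2 U(0,t)|$ rather than merely controllable, and this depends crucially on the combination of $s<0$, the non-degeneracy assumption \eqref{non-degenerate-shock}, and the largeness of $|\X_0|$.
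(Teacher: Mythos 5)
Your plan is essentially the paper's plan: differentiate the anti-derivative equation twice, do a weighted $L^2_{\beta+2}$ estimate, use the PDE at $x=0$ to express $\p_x^3 U(0,t)$ through $\p_x^2 U(0,t)$, and extract coercivity of the boundary term from the sign of $f'$ near the boundary (the incoming condition). The differences are cosmetic — the paper expresses $\p_x^3 U(0,t) = f'(u_b(t))\p_x^2 U(0,t) + \mathcal{O}(e^{\theta_s(st+\X)})$ by applying \eqref{eqn-0} to $u$ and $\ur$ separately and subtracting, whereas you apply \eqref{eqn-anti-deriv2} at $x=0$ directly; since $f'(\usharp(0,t)) - f'(u_b(t)) = \mathcal{O}(e^{\theta_s(st+\X)})$, the two leading coefficients agree up to exponentially small error.

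There is, however, a genuine gap in your treatment of the flux term and hence of the boundary contribution. You claim the boundary integration-by-parts contribution is only $-\xiangle^{\beta+2}\big|_{x=0}\p_x^3 U(0,t)\p_x^2 U(0,t)$, coming from $-\p_x^4 U$, and that the nonlinear flux $\p_x^2(f'(\usharp)\p_x U)$ is ``absorbable into the dissipation via $\norm{\xiangle\p_x\usharp}_{L^\infty}<\infty$''. That is fine for the lower-order pieces $f''(\usharp)\p_x\usharp\,\p_x^2 U$ etc., which carry a factor $\p_x\usharp$, but the \emph{leading} piece $\int_{-\infty}^0\xiangle^{\beta+2}f'(\usharp)\p_x^3 U\,\p_x^2 U\,dx$ has no decaying prefactor. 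It cannot be absorbed by $C_\epsilon\abs{\p_x^2 U}_{\beta+2}^2$, since neither the a priori assumption nor Lemma~\ref{lem-eng-2} controls $\int_0^T\abs{\p_x^2 U}_{\beta+2}^2\,dt$. You must integrate it by parts as $\tfrac12 f'(\usharp)\p_x|\p_x^2 U|^2$, which produces the additional boundary term $\tfrac12\xiangle^{\beta+2}f'(\usharp)|\p_x^2 U|^2\big|_{x=0}$ appearing on the same side of the identity. The correct boundary contribution is therefore $\bigl\{\xiangle^{\beta+2}\bigl(\tfrac12 f'(\usharp)|\p_x^2 U|^2 - \p_x^2 U\,\p_x^3 U\bigr)\bigr\}\big|_{x=0}$, exactly as in the paper's \eqref{eng-3-1}. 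After substituting for $\p_x^3 U(0,t)$, the coercive coefficient becomes $\tfrac12 f'(\usharp(0,t)) - f'(\usharp(0,t)) = -\tfrac12 f'(\usharp(0,t)) \approx -\tfrac12 f'(u_b(t))$, which is still $\geq \delta_b/2 > 0$ by \eqref{bc-incoming}, so the conclusion survives; but your constant $c_0 \approx -f'(\usharp(0,t))$ is twice the actual one, and the derivation as written is incomplete. A second, smaller caveat: the bound $|R(t)|\leq C\nu + \ldots$ is too coarse — the $\nu$-piece must actually decay like $\nu e^{-c(|\X_0|+t)}$ (coming from the cancellation $\dot u_b - \p_t\usharp(0,t) = \dot u_b(1-\sigma_\X(0,t)) + \mathcal{O}(e^{\theta_s(st+\X)})$), since a non-decaying $\nu$ multiplied by $(1+|\X_0|+t)^{\beta+2}$ would not be integrable in time. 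You do write the decaying form $\nu^2 e^{-c(|\X_0|+t)}$ in the subsequent Cauchy–Schwarz step, but the intermediate statement should be tightened.
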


\begin{proof}
	Differentiate \eqref{eqn-anti-deriv2} with respect to $x$ gives 
	\begin{equation}
		\label{eqn-anti-deriv3}
		\p_t\p_x^2 U+\p_x(f'(\usharp)\p_x^2 U+f''(\usharp)\p_x \usharp \p_x U)-\p_x^4 U=\X'\p_x^2 \phi^S_{\X} + \p_x^2 H_0-\p_x^2 H_1-\p_x h_2\,, 
	\end{equation}
	By multiplying \eqref{eqn-anti-deriv2} with $\xiangle^{\beta+2}\p_x^2 U$, integrating the resultant on $(-\infty,0)$, using integration by parts, we have
	\begin{align}
		& \dt\int_{-\infty}^{0} \frac{\xiangle^{\beta+2}}{2}  |\p_x^2 U|^2 dx 
		+  \int_{-\infty}^{0} \xiangle^{\beta+2}|\p_x^3 U|^2 dx \notag\\
	    & + \Big\{\xiangle^{\beta+2}\big(\frac{1}{2}f'(\usharp)|\p_x^2 U|^2-\p_x^2 U\p_x^3 U\big)\Big\}\Big\vert_{x=0} \notag \\	
		=&\int_{-\infty}^{0} \frac{\beta+2}{2} \xiangle^{\beta}(\xi-\xi_*) 
		[(f'(\usharp)-s-\X')  |\p_x^2 U|^2-2\p_x^2 U\p_x^3 U] dx  \notag	\\
		&- \int_{-\infty}^{0}  \xiangle^{\beta+2} \big[\frac{3}{2}f''(\usharp)\p_x\usharp \p_x^2 U + f''(\usharp) \p_x^2 \usharp \p_x U + f'''(\usharp)(\p_x \usharp)^2\p_x U\big]\p_x^2 U dx \notag\\
		&  + \int_{-\infty}^{0}  \xiangle^{\beta+2}  ( \X' \p_x^2\phi^S_{\X} +\p_x^2 H_0-\p_x^2 H_1-\p_x h_2 ) \p_x^2 U  dx , \label{eng-3-1}
	\end{align}
    where $\p_x^2 H_0 = \mathcal{O} ((|\p_x^2 \usharp|+|\p_x \usharp|)|\p_x U|^2+|\p_x^2 U|^2+|\p_x U\p_x^3 U|)$.
	It follows from \eqref{eqn-0}, \eqref{bc0} and \eqref{def-ansatz-shock} that for small $\nu$,
	\begin{align}
		\p_x^2 u(0,t) &=  u_b'(t) + f'(u_b(t)) \p_x u (0,t), \notag\\
		\p_x^3 U(0,t) 
		&= \big(\p_x^2 u-\p_x^2 \ur+\p_x^2 \ur(1-\sigma_{\X})-2\p_x \ur\p_x \sigma_{\X}-(\ur-\ubl)\p_x^2 \sigma_{\X} \big)\Big\vert_{x=0}  \notag\\
		&=f'(u_b(t))\p_x^2 U(0,t)+ \mathcal{O}(e^{\theta_s(st+\X)}), \label{ic-3}
	\end{align}
	and thus
	\begin{align}
		&  \Big\{\xiangle^{\beta+2}\big(\frac{1}{2}f'(\usharp)|\p_x^2 U|^2-\p_x^2 U\p_x^3 U\big) \Big\}	\Big\vert_{x=0} \notag \\
		&\qquad\geq (st+\X)^{\beta+2} \Big\{\big[\frac{1}{2}f'(\usharp(0,t))-f'(u_b(t)) \big] |\p_x^2 U(0,t)|^2 -C (e^{2c(st+\X)})\Big\} \notag\\
		&\qquad\geq c (1+|\X_0|+t)^{\beta+2}  |\p_x^2 U(0,t)|^2 -C (e^{-c(|\X_0|+t)}) \,. \label{eng-3-2}
	\end{align}
	In virtue of \eqref{eqn-assum-shock}, \eqref{def-H0}, \eqref{ineq-H}, the H\"{o}lder inequality and Young inequality, it yields that
	\begin{align}
		&\abs{\int_{-\infty}^{0}  \xiangle^{\beta}(\xi-\xi_*) 
			[(f'(\usharp)-s-\X')  |\p_x^2 U|^2-2\p_x^2 U\p_x^3 U] dx  } \notag\\
		&\qquad \leq \frac{1}{2}\abs{\p_x^3 U}_{\beta+2}+ C \abs{\p_x^2 U}_{\beta+1}, \label{eng-3-3}\\
		&\abs{ \int_{-\infty}^{0}  \xiangle^{\beta+2} \big[\frac{3}{2}f''(\usharp)\p_x\usharp \p_x^2 U + f''(\usharp) \p_x^2 \usharp \p_x U + f'''(\usharp)(\p_x \usharp)^2\p_x U\big]\p_x^2 U dx } \notag \\
		&\qquad \leq \norm{\xiangle^{3/2} \p_x \usharp}_{L^\infty} (\abs{\p_x^2 U}_{\beta+1}^2+\abs{\p_x U}_{\beta}^2)
		\leq  C(\abs{\p_x^2 U}_{\beta+1}^2+\abs{\p_x U}_{\beta}^2),  \label{eng-3-4} \\
		&\abs{\int_{-\infty}^{0}  \xiangle^{\beta+2}    \p_x^2 H_0  \p_x^2 U  dx } \notag\\
		&\qquad \leq C \norm{\xiangle \p_x^2 U}_{L^{\infty}} (\norm{\xiangle (|\p_x^2 \usharp|+|\p_x \usharp|)}_{L^{\infty}}\abs{\p_x U}_{\beta}^2 +\abs{\p_x^2 U}_{\beta+1}^2 \notag\\
		&\qquad\quad +\abs{\p_x U}_{\beta}\abs{\p_x^3 U}_{\beta+2}) 
		\leq C\varepsilon (\abs{\p_x^3 U}_{\beta+2}^2+\abs{\p_x^2 U}_{\beta+1}^2 + \abs{\p_x U}_{\beta}^2 ), \label{eng-3-5}  \\
		&\abs{\int_{0}^{T}\int_{-\infty}^{0}  \xiangle^{\beta+2}  ( \X' \p_x^2\phi^S_{\X}-\p_x^2 H_1-\p_x h_2 ) \p_x^2 U  dx dt} \notag \\
		&\qquad \leq C\int_{0}^{T}(\abs{\X'\p_x^2\phi^S_{\X} }_{\beta+2} +  \abs{\p_x^2 H_1 }_{\beta+2} + \abs{\p_x h_2 }_{\beta+2} )\abs{\p_x^2 U}_{\beta+2} dt \notag\\
		&\qquad \leq \frac{1}{2}\sup_{t\in[0,T]}\abs{\p_x^2 U}_{\beta+2}^2 + C e^{-c|\X_0|}+C\X_0^{-1}\int_{0}^{T} (1+|\X_0|+t)^{\beta+3} \abs{\p_x^2 U(0,t)}^2  dt .
		 \label{eng-3-6}
	\end{align} 
	
	Therefore, combining \eqref{eng-3-2}--\eqref{eng-3-6}, and \cref{lem-shift1,lem-source,lem-eng-basic,lem-eng-1,lem-eng-2}, and integrating \eqref{eng-3-1} in time $[0,t]$, it yields that 
	if $\nu, \varepsilon, |\X_0|^{-1}$ are smaller than some positive constants independent of $T$, then \eqref{eng-3} is proved.
    
\end{proof}

\begin{lemma}
	\label{lem-eng-4}
	Under the assumptions of \cref{prop-apriori-shock}, there exists small positive constant $\nu_6,\varepsilon_6$ independent of $T$ such that, if $\nu<\nu_6$ and $\varepsilon+|\X_0|^{-1}<\varepsilon_6$, then we have
	\begin{equation}\label{eng-4}
		\begin{aligned}
			\sup_{t\in[0,T]}
			& \{ \abs{\p_x^3 U(t)}_{\beta+3}^2 +  (1+|\X_0|+t)^{\beta+3} |\p_x^2 U(0,t )|^2\}\\
			& + \int_{0}^{T}\big\{ (1+|\X_0|+t)^{\beta+3} |\p_x^2 U(0,t)|^2   + \abs{\p_x^4 U}_{\beta+3}^2 \big\}  d t \\
			& \leq C (\norm{U_0}_{H_{\beta}^3}^2 + |\X_0|^{-1}) + C\nu.
		\end{aligned} 
	\end{equation}
\end{lemma}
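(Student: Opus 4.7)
The plan is to close the energy hierarchy by differentiating \eqref{eqn-anti-deriv3} once more in $x$ to obtain an equation for $\p_x^3 U$, and testing it against $\xiangle^{\beta+3}\p_x^3 U$ on $\R_-$, in direct analogy to Lemma \ref{lem-eng-3}. After integration by parts, the resulting identity has the same structure as \eqref{eng-3-1}: a time derivative of $\tfrac12\xiangle^{\beta+3}|\p_x^3 U|^2$, the coercive dissipation $|\p_x^4 U|_{\beta+3}^2$, a boundary trace at $x=0$, and lower-order interior terms that have already been treated in Lemmas \ref{lem-shift1}--\ref{lem-eng-3}. The nonlinearity $\p_x^3 H_0$ is now of the form $\mathcal{O}(|\p_x \usharp|^2|\p_x U|+|\p_x^2 \usharp||\p_x^2 U|+|\p_x\usharp||\p_x^3 U|+|\p_x U||\p_x^4 U|+\cdots)$ and can be controlled by $C\varepsilon(|\p_x^4 U|_{\beta+3}^2+\abs{\p_x^3 U}_{\beta+2}^2)+\text{lower order}$ via the a priori bound \eqref{eqn-assum-shock} and Sobolev embedding, as in \eqref{eng-3-5}.

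The core step is the boundary contribution
\[
\Big\{\xiangle^{\beta+3}\bigl(\tfrac12 f'(\usharp)|\p_x^3 U|^2 - \p_x^3 U\,\p_x^4 U\bigr)\Big\}\Big|_{x=0}.
\]
To convert this to a positive quantity, I would differentiate $\p_t u+\p_x f(u)=\p_x^2 u$ twice in $x$ and evaluate at $x=0$, using $u\vert_{x=0}=u_b$ (so $\p_t u(0,t)=u_b'(t)$ and higher time traces are controlled by $\|u_b\|_{H^3_{\mathrm{per}}}$), to obtain a relation of the form
\[
\p_x^4 U(0,t) = f'(u_b(t))\,\p_x^3 U(0,t) + \mathcal{O}\bigl(|\p_x^2 U(0,t)|+e^{-c(|\X_0|+t)}+\nu\bigr),
\]
analogous to \eqref{ic-3}. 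Combined with $\tfrac12 f'(\usharp(0,t))-f'(u_b(t))\geq c>0$ (from \eqref{bc-incoming} and the smallness of $\ur-\ubr$ at the boundary), and multiplied by $\xiangle^{\beta+3}\vert_{x=0}\sim(1+|\X_0|+t)^{\beta+3}$, this produces a positive boundary term $c(1+|\X_0|+t)^{\beta+3}|\p_x^3 U(0,t)|^2$ on the left, up to terms absorbable by lower-order estimates. A second application of \eqref{ic-3}, written as $|\p_x^2 U(0,t)|\leq \delta_b^{-1}|\p_x^3 U(0,t)|+Ce^{-c(|\X_0|+t)}$, then converts this into the control of $(1+|\X_0|+t)^{\beta+3}|\p_x^2 U(0,t)|^2$ appearing on the left-hand side of \eqref{eng-4}.

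Having gained this boundary control, the crucial closure move is to absorb the previously unclosed term $C\X_0^{-\underline{\beta}}\int_0^T (1+|\X_0|+t)^{\beta+3}|\p_x^2 U(0,\tau)|^2\,d\tau$ appearing on the right-hand side of \eqref{eng-basic}, \eqref{eng-1}, \eqref{eng-2}, and \eqref{eng-3} into the newly produced integrated boundary term on the left of \eqref{eng-4}, which is possible provided $|\X_0|^{-1}$ is taken small enough so that $\X_0^{-\underline{\beta}}$ is a small coefficient. Combining this absorption with the estimates of the source terms $\X'\p_x^3\phi^S_{\X}$, $\p_x^3 H_1$, $\p_x^2 h_2$ from Lemmas \ref{lem-shift1} and \ref{lem-source}, and the a priori smallness of $\nu$ and $\varepsilon+|\X_0|^{-1}$, yields \eqref{eng-4}. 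The main obstacle is the boundary identity for $\p_x^4 U(0,t)$: differentiating the conservation law at the boundary produces a mixed time--space trace $\p_t\p_x u(0,t)$ that must be traded for spatial quantities and derivatives of $u_b$ using the equation itself, and the $\nu$-dependence arising from $\ur-\ubr$ and $u_b-\ub_b$ at $x=0$ must be tracked carefully to match the $C\nu$ on the right-hand side of \eqref{eng-4}.
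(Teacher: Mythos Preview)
Your overall plan is right, but the boundary identity you propose for $\p_x^4 U(0,t)$ is incorrect, and this is where the argument breaks. Differentiating the equation twice and evaluating at $x=0$ does \emph{not} give
\[
\p_x^4 U(0,t) = f'(u_b(t))\,\p_x^3 U(0,t) + \mathcal{O}\bigl(|\p_x^2 U(0,t)|+e^{-c(|\X_0|+t)}+\nu\bigr);
\]
the correct relation (the paper's \eqref{ic-4}) is
\[
\p_x^4 U(0,t) = \p_t\p_x^2 U(0,t) + f'(u_b(t))\,\p_x^3 U(0,t) + \mathcal{O}\bigl(|\p_x^2 U(0,t)|^2+e^{\theta_s(st+\X)}\bigr).
\]
The mixed trace $\p_t\p_x^2 U(0,t)$ cannot be ``traded for spatial quantities and derivatives of $u_b$ using the equation itself'': if you attempt this, the equation just gives back $\p_x^4 U(0,t)-f'(u_b)\p_x^3 U(0,t)$, which is circular. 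Also, ``higher time traces'' of $u$ at $x=0$ are \emph{not} controlled by $\|u_b\|_{H^3_{\mathrm{per}}}$ once they involve a spatial derivative, since $\p_x u(0,t)$ and $\p_x^2 u(0,t)$ depend on the solution, not only on $u_b$.

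The paper handles this differently. It keeps $\p_t\p_x^2 U(0,t)$ and substitutes the lower-order relation \eqref{ic-3}, $\p_x^3 U(0,t)=f'(u_b)\p_x^2 U(0,t)+\mathcal{O}(e^{c(st+\X)})$, into the product $-\p_x^3 U(0,t)\,\p_t\p_x^2 U(0,t)$, obtaining
\[
-\tfrac12\dt\bigl(f'(u_b)\,|\p_x^2 U(0,t)|^2\bigr)+\text{lower order},
\]
while the remaining piece $-f'(u_b)|\p_x^3 U(0,t)|^2$ becomes $-\tfrac12(f'(u_b))^3|\p_x^2 U(0,t)|^2>0$ after the same substitution; see \eqref{eng-4-2}. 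Multiplying by $\xiangle^{\beta+3}|_{x=0}\sim(1+|\X_0|+t)^{\beta+3}$ and integrating in $t$ then yields \emph{both} the integral and the $\sup_t$ control of $(1+|\X_0|+t)^{\beta+3}|\p_x^2 U(0,t)|^2$ claimed in \eqref{eng-4}. Your route, even if the error term were as you wrote, would produce only the time-integrated boundary control via the sign of $\tfrac12 f'(\usharp)-f'(u_b)$; it would not give the pointwise-in-$t$ bound that appears on the left of \eqref{eng-4} and in the a priori quantity \eqref{eqn-assum-shock}.
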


\begin{proof}
	Differentiate \eqref{eqn-anti-deriv3} with respect to $x$ gives 
	\begin{equation}
		\label{eqn-anti-deriv4}
		\begin{aligned}
			\p_t\p_x^3 U+\p_x(f'(\usharp)\p_x^3 U)-\p_x^5 U=&-\p_x[f''(\usharp)\p_x \usharp \p_x^2 U +\p_x(f''(\usharp)\p_x \usharp \p_x U)]\\
			&+\X'\p_x^3 \phi^S_{\X} + \p_x^3 H_0-\p_x^3 H_1-\p_x^2 h_2   \,, 
		\end{aligned}
	\end{equation}
	By multiplying \eqref{eqn-anti-deriv4} with $\xiangle^{\beta+3}\p_x^3 U$, integrating the resultant on $(-\infty,0)$, using integration by parts, we have
	\begin{align}
		& \dt\int_{-\infty}^{0} \frac{\xiangle^{\beta+3}}{2}  |\p_x^3 U|^2 dx 
		+  \int_{-\infty}^{0} \xiangle^{\beta+3}|\p_x^4 U|^2 dx \notag\\
		& + \Big\{\xiangle^{\beta+3}\big(\frac{1}{2}f'(\usharp)|\p_x^3 U|^2-\p_x^3 U\p_x^4 U\big)\Big\}\Big\vert_{x=0} \notag \\	
		=&\int_{-\infty}^{0} \frac{\beta+3}{2} \xiangle^{\beta+1}(\xi-\xi_*) 
		[(f'(\usharp)-s-\X')  |\p_x^3 U|^2-2\p_x^3 U\p_x^4 U] dx  \notag	\\
		&- \int_{-\infty}^{0}  \xiangle^{\beta+3}\p_x^3 U   [\frac{1}{2}f''(\usharp)\p_x \usharp \p_x^3 U +\text{right had terms of~}\eqref{eqn-anti-deriv4}]. \label{eng-4-1}
	\end{align} 
	It follows from \eqref{eqn-0}, \eqref{bc0} and \eqref{def-ansatz-shock} that 
	\begin{align}
		\p_x^3 u(0,t) &=  \p_t\p_x u(0,t) + f'(u_b) \p_x^2 u (0,t)+f''(u_b) (\p_x^2 u (0,t))^2, \notag\\
		\p_x^4 U(0,t) 
		&= \big\{\p_x^3 u-\p_x^3 \ur+\mathcal{O}(1) (|1-\sigma_{\X}|+|\p_x\sigma_{\X}|+|\p_x^2\sigma_{\X}|+|\p_x^3\sigma_{\X}|)\big\}\Big\vert_{x=0}  \notag\\
		&=\p_t\p_x^2 U(0,t) + f'(u_b)\p_x^3 U(0,t) + f''(u_b) |\p_x^2 U(0,t)|^2 \notag\\
		&\qquad+ 2f''(u_b) \p_x^2 U(0,t)\p_x^2 \usharp(0,t) + \mathcal{O}(e^{\theta_s(st+\X)}) \notag\\
		&=\p_t\p_x^2 U(0,t) + f'(u_b)\p_x^3 U(0,t)+ \mathcal{O}(1) ( |\p_x^2 U(0,t)|^2+  e^{\theta_s(st+\X)}), \label{ic-4}
	\end{align}
	and thus with $\p_x^3 U(0,t)=f'(u_b(t))\p_x^2 U(0,t)+ \mathcal{O}(e^{c(st+\X)})$ in \eqref{ic-3} and the Young inequality, we have 
	\begin{align}
		& \Big\{\xiangle^{\beta+3}\big(\frac{1}{2}f'(\usharp)|\p_x^3 U|^2-\p_x^3 U\p_x^4 U\big) \Big\}	\Big\vert_{x=0} \notag \\
	=	& (st+\X)^{\beta+3} \Big\{ -\p_x^3 U(0,t)\p_t\p_x^2 U(0,t) -\frac{1}{2}f'(u_b)  |\p_x^3 U(0,t)|^2 \notag \\
	    &\quad - \mathcal{O}(1)|\p_x^3 U(0,t)| ( |\p_x^2 U(0,t)|^2+  e^{\theta_s(st+\X)})\Big\} \notag\\
	=	& (st+\X)^{\beta+3} \Big\{ -\frac{1}{2} \Big[ \dt (f'(u_b)|\p_x^2 U(0,t)|^2) + (f'(u_b))^3|\p_x^2 U(0,t)|^2\Big] \notag\\
		&\quad  + \dt (\mathcal{O}(1) e^{c(st+\X)} \p_x^2 U(0,t) ) - \mathcal{O}(1) ((\varepsilon+\nu)|\p_x^2 U(0,t)|^2+ e^{2c(st+\X)})\Big\}\,. \label{eng-4-2}
	\end{align}

	The right hand terms in \eqref{eng-4-1} are lower order terms, and can be estimates as \cref{lem-eng-2,lem-eng-3}. Therefore, combining \eqref{eng-4-2}, \cref{lem-shift1,lem-source,lem-eng-basic,lem-eng-1,lem-eng-2,lem-eng-3}, and integrating \eqref{eng-4-1} in time $[0,t]$, it yields that 
	\begin{align*}
		\sup_{t\in[0,T]}
		& \{ \abs{\p_x^3 U(t)}_{\beta+3}^2 +  (1+|\X_0|+t)^{\beta+3} |\p_x^2 U(0,t )|^2\}\\
		& + \int_{0}^{T}\big\{ (1+|\X_0|+t)^{\beta+3} |\p_x^2 U(0,t)|^2   + \abs{\p_x^4 U}_{\beta+3}^2 \big\}  d t \\
		 \leq& C (\norm{U_0}_{H_{\beta}^3}^2 + |\X_0|^{-1}) +C (\X_0^{-\underline{\beta}}+\varepsilon+\nu) \int_{0}^{T} (1+|\X_0|+t)^{\beta+3} |\p_x^2 U(0,t )|^2  d t + C\nu.
	\end{align*} 
     It follows that if $\nu, \varepsilon, |\X_0|^{-1}$ are smaller than some positive constants independent of $T$, then \eqref{eng-4} is proved.
	
\end{proof}

\begin{proof}[Proof of \cref{thm-shock-viscous}]
	Combining all \cref{lem-eng-basic,lem-eng-1,lem-eng-2,lem-eng-3,lem-eng-4} with a continuity argument, one can obtain \eqref{ineq-apriori-shock} and the rigidity of the a priori assumption \eqref{eqn-assum-shock}, i.e. \cref{prop-apriori-shock} is proved.
	
	Hence, by the standard local-existence theory, the problem \eqref{eqn-anti-deriv} and \eqref{eqn-shift1} admits a unique global solution $ (U,\X) \in \mathcal{S}(0,\infty) \times \mathcal{Z}(0,\infty)$ for small $\norm{U_0}_{H_{\beta}^3}$, $|\X_0|^{-1}$ and $\nu$, and the estimate \eqref{ineq-apriori-shock} holds for all $T\in [0,\infty)$. So Theorem \ref{thm-anti-deriv} can be  completed.
	
	Furthermore, the estimate \eqref{ineq-apriori-shock} and the equation \eqref{eqn-anti-deriv} imply that
    $$\int_{0}^{\infty} \abs{\p_x U(t)}_{\beta}^2 dt + \int_{0}^{\infty} \abs{\frac{d}{dt} \abs{\p_x U(t)}_{\beta}^2} dt <\infty,
    $$	
    which together with \eqref{ineq-apriori-shock} shows that 
     $$\norm{U}_{L^\infty}+\norm{\p_x U}_{L^\infty}\rightarrow 0 \quad \text{as~} t\rightarrow\infty.$$
	Note that by separating the $\R_-$ into $(-\infty,(st+\X)/2]$ and $ [(st+\X)/2,0]$ as above estimates, it holds that
	\begin{equation}
		\label{ineq-two ansatz}
			\norm{\usharp - (\ur - \ubr + \phi^S_{\X}) }_{L^\infty}
			= \norm{(\ur - \ubr)(1-\sigma_{\X})  }_{L^\infty}
			\leq  C e^{-c(|\X_0|+t)} .	
	\end{equation}
	Then \eqref{asymptotic behavior} holds and hence Theorem \ref{thm-shock-viscous} is proved.
	
\end{proof}

\section*{}
\noindent\textbf{Acknowledgements}. The author would like to thank Professor Zhouping Xin for helpful discussions, and to thank the Institute of Mathematical Sciences, The Chinese University of Hong Kong for its hospitality during the visit, in which the research of this work was partially studied.

\vspace{0.3cm}

\noindent\textbf{Claims}. 
The author claims that this paper has no associated data.

\vspace{1cm}

\bibliographystyle{amsplain}

\begin{thebibliography}{10}
	
	\bibitem{Beck.S.Z2010}
	Margaret Beck, Bj{\"o}rn Sandstede, and Kevin Zumbrun, \emph{Nonlinear
		{{Stability}} of {{Time-Periodic Viscous Shocks}}}, Arch Rational Mech Anal
	\textbf{196} (2010), no.~3, 1011--1076.
	
	\bibitem{Brezi.K2012}
	Jan B{\v r}ezina and Yoshiyuki Kagei, \emph{Decay properties of solutions to
		the linearized compressible {{Navier}}--{{Stokes}} equation around
		time-periodic parallel flow}, Math. Models Methods Appl. Sci. \textbf{22}
	(2012), no.~07, 1250007.
	
	\bibitem{Dafer.2016}
	C.~M. Dafermos, \emph{Hyperbolic {{Conservation Laws}} in {{Continuum
				Physics}}}, Grundlehren Der Mathematischen {{Wissenschaften}}, Springer
	Berlin, Heidelberg, 2016.
	
	\bibitem{Dafer.1995}
	C.M. Dafermos, \emph{Large {{Time Behavior}} of {{Periodic Solutions}} of
		{{Hyperbolic Systems}} of {{Conservation Laws}}}, Journal of Differential
	Equations \textbf{121} (1995), no.~1, 183--202.
	
	\bibitem{Deng.W.Y2012}
	Shijin Deng, Weike Wang, and Shih-Hsien Yu, \emph{Viscous conservation laws
		with boundary}, SIAM J. Math. Anal. \textbf{44} (2012), no.~4, 2695--2755.
	
	\bibitem{Duan.U.Y.Z2007}
	Renjun Duan, Seiji Ukai, Tong Yang, and Huijiang Zhao, \emph{Optimal {{Decay
				Estimates}} on the {{Linearized Boltzmann Equation}} with {{Time Dependent
				Force}} and their {{Applications}}}, Commun. Math. Phys. \textbf{277} (2007),
	no.~1, 189--236.
	
	\bibitem{Feire.G.S2023}
	Eduard Feireisl, Piotr Gwiazda, and Agnieszka {\'S}wierczewska-Gwiazda,
	\emph{Time periodic motion of temperature driven compressible fluids}, Math.
	Ann. \textbf{387} (2023), no.~3-4, 1603--1627.
	
	\bibitem{Feire.M.P.S1999}
	Eduard Feireisl, {\v S}{\`a}rka {Matu{\v s}{\^u}-Ne{\^c}a}, Hana
	Petzeltov{\'a}, and Ivan Stra{\v s}kraba, \emph{On the {{Motion}} of a
		{{Viscous Compressible Fluid Driven}} by a {{Time-Periodic External Force}}},
	Arch. Rational Mech. Anal. \textbf{149} (1999), no.~1, 69--96.
	
	\bibitem{Glimm.L1970}
	James Glimm and Peter~D. Lax, \emph{Decay of solutions of systems of nonlinear
		hyperbolic conservation laws}, Memoirs of the {{American Mathematical
			Society}}, {{No}}. 101, American Mathematical Society, Providence, R.I.,
	1970. \MR{0265767}
	
	\bibitem{Goodm.1986}
	Jonathan Goodman, \emph{Nonlinear asymptotic stability of viscous shock
		profiles for conservation laws}, Arch. Rational Mech. Anal. \textbf{95}
	(1986), no.~4, 325--344.
	
	\bibitem{Hong.W2017}
	Hakho Hong and Teng Wang, \emph{Stability of {{Stationary Solutions}} to the
		{{Inflow Problem}} for {{Full Compressible Navier--Stokes Equations}} with a
		{{Large Initial Perturbation}}}, SIAM J. Math. Anal. \textbf{49} (2017),
	no.~3, 2138--2166.
	
	\bibitem{Huang.M.S2003}
	Feimin Huang, Akitaka Matsumura, and Xiaoding Shi, \emph{Viscous {{Shock Wave}}
		and {{Boundary Layer Solution}} to an {{Inflow Problem}} for {{Compressible
				Viscous Gas}}}, Communications in Mathematical Physics \textbf{239} (2003),
	no.~1-2, 261--285.
	
	\bibitem{Jin.Y2015}
	Chunhua Jin and Tong Yang, \emph{Time periodic solution for a 3-{{D}}
		compressible {{Navier}}--{{Stokes}} system with an external force in {{R}}
		3}, J. Differential Equations \textbf{259} (2015), no.~7, 2576--2601.
	
	\bibitem{Kagei.O2016}
	Yoshiyuki Kagei and Ryouta Oomachi, \emph{Stability of time periodic solution
		of the {{Navier}}--{{Stokes}} equation on the half-space under oscillatory
		moving boundary condition}, J. Differential Equations \textbf{261} (2016),
	no.~6, 3366--3413.
	
	\bibitem{Kagei.T2015a}
	Yoshiyuki Kagei and Kazuyuki Tsuda, \emph{Existence and stability of time
		periodic solution to the compressible {{Navier}}--{{Stokes}} equation for
		time periodic external force with symmetry}, Journal of Differential
	Equations \textbf{258} (2015), no.~2, 399--444.
	
	\bibitem{Kawas.M1985}
	Shuichi Kawashima and Akitaka Matsumura, \emph{Asymptotic stability of
		traveling wave solutions of systems for one-dimensional gas motion},
	Commun.Math. Phys. \textbf{101} (1985), no.~1, 97--127.
	
	\bibitem{Kawas.N.Z2003}
	Shuichi Kawashima, Shinya Nishibata, and Peicheng Zhu, \emph{Asymptotic
		{{Stability}} of the {{Stationary Solution}} to the {{Compressible
				Navier}}--{{Stokes Equations}} in the {{Half Space}}}, Commun. Math. Phys.
	\textbf{240} (2003), no.~3, 483--500.
	
	\bibitem{Kawas.Z2009}
	Shuichi Kawashima and Peicheng Zhu, \emph{Asymptotic {{Stability}} of
		{{Rarefaction Wave}} for the {{Navier}}--{{Stokes Equations}} for a
		{{Compressible Fluid}} in the {{Half Space}}}, Arch Rational Mech Anal
	\textbf{194} (2009), no.~1, 105--132.
	
	\bibitem{Li.Y2017}
	Tatsien Li and Lei Yu, \emph{One-sided exact boundary null controllability of
		entropy solutions to a class of hyperbolic systems of conservation laws}, J.
	Math. Pure. Appl. \textbf{107} (2017), no.~1, 1--40.
	
	\bibitem{Liu.1977a}
	Tai-Ping Liu, \emph{Large-time behavior of solutions of initial and
		initial-boundary value problems of a general system of hyperbolic
		conservation laws}, Commun.Math. Phys. \textbf{55} (1977), no.~2, 163--177.
	
	\bibitem{Liu.1977b}
	Tai-Ping Liu, \emph{Linear and nonlinear large-time behavior of solutions of general
		systems of hyperbolic conservation laws}, Commun. Pure Appl. Math.
	\textbf{30} (1977), no.~6, 767--796.
	
	\bibitem{Liu.M.N1998}
	Tai-Ping Liu, Akitaka Matsumura, and Kenji Nishihara, \emph{Behaviors of
		{{Solutions}} for the {{Burgers Equation}} with {{Boundary}} corresponding to
		{{Rarefaction Waves}}}, SIAM J. Math. Anal. \textbf{29} (1998), no.~2,
	293--308.
	
	\bibitem{Liu.N1997}
	Tai-Ping Liu and Kenji Nishihara, \emph{Asymptotic {{Behavior}} for {{Scalar
				Viscous Conservation Laws}} with {{Boundary Effect}}}, Journal of
	Differential Equations \textbf{133} (1997), no.~2, 296--320.
	
	\bibitem{Liu.Y1997a}
	Tai-Ping Liu and Shih-Hsien Yu, \emph{Propagation of a {{Stationary Shock
				Layer}} in the {{Presence}} of a {{Boundary}}}, Archive for Rational
	Mechanics and Analysis \textbf{139} (1997), no.~1, 57--82.
	
	\bibitem{Liu.Z2009a}
	Tai-Ping Liu and Yanni Zeng, \emph{Time-{{Asymptotic Behavior}} of {{Wave
				Propagation Around}} a {{Viscous Shock Profile}}}, Commun. Math. Phys.
	\textbf{290} (2009), no.~1, 23--82.
	
	\bibitem{Ma.U.Y2010}
	Hongfang Ma, Seiji Ukai, and Tong Yang, \emph{Time periodic solutions of
		compressible {{Navier}}--{{Stokes}} equations}, J. Differential Equations
	\textbf{248} (2010), no.~9, 2275--2293.
	
	\bibitem{Matsu.M1999}
	Akitaka Matsumura and Ming Mei, \emph{Convergence to {{Travelling Fronts}} of
		{{Solutions}} of the p -{{System}} with {{Viscosity}} in the {{Presence}} of
		a {{Boundary}}}, Arch. Rational Mech. Anal. \textbf{146} (1999), no.~1,
	1--22.
	
	\bibitem{Matsu.N1989}
	Akitaka Matsumura and Takaaki Nishida, \emph{Periodic {{Solutions}} of a
		{{Viscous Gas Equation}}}, North-{{Holland Mathematics Studies}}, vol. 160,
	Elsevier, 1989, pp.~49--82.
	
	\bibitem{Matsu.N1994a}
	Akitaka Matsumura and Kenji Nishihara, \emph{Asymptotic stability of traveling
		waves for scalar viscous conservation laws with non-convex nonlinearity},
	Commun.Math. Phys. \textbf{165} (1994), no.~1, 83--96.
	
	\bibitem{Matsu.N2001}
	Akitaka Matsumura and Kenji Nishihara, \emph{Large-{{Time Behaviors}} of {{Solutions}} to an {{Inflow
				Problem}} in the {{Half Space}} for a {{One-Dimensional System}} of
		{{Compressible Viscous Gas}}}, Communications in Mathematical Physics
	\textbf{222} (2001), no.~3, 449--474.
	
	\bibitem{Nishi.Y1997}
	Shinya Nishibata and Shih-Hsien Yu, \emph{The {{Asymptotic Behavior}} of the
		{{Hyperbolic Conservation Laws}} with {{Relaxation}} on the
		{{Quarter-Plane}}}, SIAM J. Math. Anal. \textbf{28} (1997), no.~2, 304--321.
	
	\bibitem{Nishi.2001}
	Kenji Nishihara, \emph{Boundary {{Effect}} on a {{Stationary Viscous Shock
				Wave}} for {{Scalar Viscous Conservation Laws}}}, Journal of Mathematical
	Analysis and Applications \textbf{255} (2001), no.~2, 535--550.
	
	\bibitem{Qu.2020}
	Peng Qu, \emph{Time-periodic solutions to quasilinear hyperbolic systems with
		time-periodic boundary conditions}, Journal de Math{\'e}matiques Pures et
	Appliqu{\'e}es \textbf{139} (2020), 356--382.
	
	\bibitem{Qu.Y.Z2023}
	Peng Qu, Huimin Yu, and Xiaomin Zhang, \emph{Subsonic time-periodic solution to
		compressible {{Euler}} equations with damping in a bounded domain}, J.
	Differential Equations \textbf{352} (2023), 122--152.
	
	\bibitem{Tsuda.2016}
	Kazuyuki Tsuda, \emph{On the {{Existence}} and {{Stability}} of {{Time Periodic
				Solution}} to the {{Compressible Navier}}--{{Stokes Equation}} on the {{Whole
				Space}}}, Arch. Rational Mech. Anal. \textbf{219} (2016), no.~2, 637--678.
	
	\bibitem{Wang.W2025}
	Shu Wang and Teng Wang, \emph{Planar {{Stationary Solution}} to {{Inflow
				Problem}} for {{Nonisentropic Navier}}--{{Stokes Equations}} in {{Several
				Dimensions}}}, SIAM J. Math. Anal. \textbf{57} (2025), no.~3, 2798--2832.
	
	\bibitem{Xin.Y.Y2019}
	Zhouping Xin, Qian Yuan, and Yuan Yuan, \emph{Asymptotic {{Stability}} of
		{{Shock Waves}} and {{Rarefaction Waves}} under {{Periodic Perturbations}}
		for 1-{{D Convex Scalar Conservation Laws}}}, SIAM J. Math. Anal. \textbf{51}
	(2019), no.~4, 2971--2994.
	
	\bibitem{Xin.Y.Y2021}
	Zhouping Xin, Qian Yuan, and Yuan Yuan, \emph{Asymptotic stability of shock profiles and rarefaction waves
		under periodic perturbations for 1-{{D}} convex scalar viscous conservation
		laws}, Indiana Univ. Math. J. \textbf{70} (2021), no.~6, 2295--2349.
	
	\bibitem{Yuan.2019}
	Hairong Yuan, \emph{Time-{{Periodic Isentropic Supersonic Euler}} flows in
		{{One-Dimensional Ducts Driving}} by {{Periodic Boundary Conditions}}}, Acta
	Math Sci \textbf{39} (2019), no.~2, 403--412.
	
	\bibitem{Yuan.Y2020}
	Qian Yuan and Yuan Yuan, \emph{On {{Riemann}} solutions under different initial
		periodic perturbations at two infinities for 1-d scalar convex conservation
		laws}, J. Differential Equations \textbf{268} (2020), no.~9, 5140--5155.
	
\end{thebibliography}

\end{document}